\documentclass[11pt]{amsart}
\usepackage[utf8]{inputenc}
\usepackage{amssymb,amsthm,amsmath,amsfonts}
\usepackage[margin=1.25in]{geometry}
\usepackage{xcolor}
\usepackage[colorlinks=true,
linkcolor=purple,
filecolor=purple,
citecolor=purple,
urlcolor=blue]{hyperref}
\usepackage{graphicx}
\usepackage{tikz}
\usepackage{ifthen}
\usetikzlibrary{calc,math,fadings}
\usepackage{stmaryrd}
\usepackage{multicol}

\DeclareMathOperator{\lk}{lk}

\newtheorem*{theorem*}{Theorem}
\newtheorem{theorem}{Theorem}
\newtheorem*{claim*}{Claim}

\newtheorem*{lemma*}{Lemma}
\newtheorem{lemma}[theorem]{Lemma}
\newtheorem*{corollary*}{Corollary}
\newtheorem{corollary}[theorem]{Corollary}
\newtheorem*{proposition*}{Proposition}
\newtheorem{proposition}[theorem]{Proposition}
\newtheorem*{question*}{Question}
\newtheorem{question}[theorem]{Question}
\theoremstyle{definition}
\newtheorem*{remark*}{Remark}
\newtheorem{remark}[theorem]{Remark}
\newtheorem*{note*}{Note}
\newtheorem*{definition*}{Definition}
\newtheorem{definition}[theorem]{Definition}
\newtheorem*{example*}{Example}
\newtheorem{example}[theorem]{Example}

\title{Random Colorings in Manifolds}
\author{Chaim Even-Zohar \and Joel Hass }

\address{Chaim Even Zohar, Technion, Israel, \url{chaime@technion.ac.il}}
\address{Joel Hass, UC Davis, USA, \url{hass@math.ucdavis.edu}}

\thanks{This work was carried out in part while the first author was at the Alan Turing Institute, supported by Lloyds Register Foundation’s programme on Data-Centric Engineering. This work was carried out while the second author was visiting the Oxford Mathematical Institute and ISTA, and was a Christensen Fellow at St.~Catherine's College. The second author was partially supported by  NSF grant DMS:FRG 1760485 and BSF grant 2018313. Most of the computational work was performed with the facilities of the School of Computer Science and Engineering at the Hebrew University of Jerusalem.
}

\begin{document}

\maketitle

\begin{abstract} 
We develop a general method for constructing random manifolds and submanifolds in arbitrary dimensions. The method is based on associating colors to the vertices of a triangulated manifold, as in recent work for curves in 3-dimensional space by Sheffield and Yadin (2014). We determine conditions on which submanifolds can arise, in terms of Stiefel--Whitney classes and other properties. We then consider the random submanifolds that arise from randomly coloring the vertices. Since this model generates~submanifolds, it allows for studying properties and using tools that are not available in processes that produce general random subcomplexes. The case of 3 colors in a triangulated 3-ball gives rise to random knots and links. In this setting, we answer a question raised by de Crouy-Chanel and Simon (2019), showing that the probability of generating an unknot decays exponentially. In the general case of $k$ colors in $d$-dimensional manifolds, we investigate the random submanifolds of different codimensions, as the number of vertices in the triangulation grows. We compute the expected Euler characteristic, and discuss relations to homological percolation and other topological properties. Finally, we explore a method to search for solutions to topological problems by generating random submanifolds. We describe computer experiments that search for a low-genus surface in the 4-dimensional ball whose boundary is a given knot in the 3-dimensional sphere. 
\end{abstract}

\section{Introduction}
\label{intro}

The construction of random manifolds and submanifolds in a range of dimensions has been studied by mathematicians, physicists and biologists in a variety of contexts, generating a diversity of ideas and perspectives on this topic. We introduce here a very general method of constructing both random manifolds and random submanifolds.  Our method specializes to generate cases such as random knots, random surfaces in the 4-sphere, random surfaces bounded by a given curve in the boundary of a 4-ball, and to many other settings.

\medskip

A common approach to constructing random manifolds is based on gluing together a collection of simple pieces, typically polygons or simplices, according to some random procedure. This has been studied particularly in dimension two, where arbitrary edge gluing produces 2-manifolds \cite{harer1986euler, brooks2004random, gamburd2006poisson, pippenger2006topological, guth2011pants, linial2011expected, chmutov2016surface, petri2017random, petri2018poisson, even2020random, shrestha2020topology}. A typical property of such methods is that the valence of the resulting triangulation grows linearly with the number of simplices, while it is often desirable to have bounded valence triangulations. In dimension three and higher, random gluing of simplices along their boundaries leads to singularities with high probability, making these constructions less useful. In three dimensions, one may glue together 3-simplices with truncated corners, yielding 3-manifolds with a boundary surface of a linearly high genus~\cite{petri2020model}.

Another approach to random manifolds uses
\emph{random walks} in the mapping class group, studied in dimension three by Dunfield and Thurston, Maher, and Rivin   \cite{dunfield2006finite,maher2010random,maher2011random,rivin2014statistics,lubotzky2016random, baik2018exponential}. The random diffeomorphisms produced by these processes are used to glue together the boundaries of two handlebodies, or the two boundary surfaces of the product of a closed surface and an interval. These processes favor 3-manifolds with fixed Heegaard genus, or manifolds that fiber over a circle with a fiber of fixed genus. 

In the study of random \emph{submanifolds}, both the topological type of a submanifold and the isotopy class of an embedding are of interest. Constructions of random submanifolds have centered on models for random knots and links in $\mathbb{R}^3$, a topic that has attracted interest in physics and biology, as well as pure mathematics, leading to a variety of different approaches~\cite[a survey]{even2017models}. Random 3-manifolds also arise in this way, through Dehn fillings of random link complements, though to the authors' knowledge this has not been much explored. Random submanifolds in dimensions greater than three do not seem to have been extensively studied. 

Also related is the study of nodal lines of random fields, the zero sets of random polynomials or other analytic functions~\cite{gayet2015expected, gayet2016universal, letendre2016expected, beffara2017percolation,  letendre2019variance, armentano2018asymptotic, wigman2019expected, taylor2016vortex}. Generally, these continuous models of random submanifolds are computationally prohibitive in practice, in comparison to the more discrete models.

In problems of \emph{percolation}, one typically explores the geometry and connectivity of random subsets of $d$-dimensional space, such as vertices and edges in the cubical lattice~$\mathbb{Z}^d$. Several works have considered  the \emph{Plaquette Model} in the cubical lattice, where a cubical complex is generated based on a random set of 2-faces \cite{aizenman1983sharp, grimmett2010plaquettes, grimmett2014percolation, duncan2021homological} or $k$-faces \cite[for example]{bobrowski2020homological}. From the topological viewpoint, a drawback of these constructions is that they  yield subcomplexes rather than submanifolds. 

A method originating in the physics literature, that does produce well-behaved one and two dimensional submanifolds in~3D, is based on randomly coloring lattice vertices using two or three colors. It was introduced and investigated experimentally by Bradley et~al.~\cite{bradley1991surfaces, bradley1992growing}, with applications in mind to the study of polymers, to modeling the formation of cosmic strings~\cite{vachaspati1984formation, scherrer1986cosmic, hindmarsh1995statistical}, and to other contexts~\cite{nahum2012universal}. Sheffield and Yadin~\cite{sheffield2014tricolor} used topological tools to analyze the emergence of large components in these random 1-manifolds in 3-space. Random knotting in these curves has been explored by de Crouy-Chanel and Simon~\cite{de2019random}, with numerical experiments and conjectures. These works inspired the general scheme developed here. 

\medskip
In this paper we give constructions for random submanifolds in a wide variety of ambient manifolds and a large range of dimensions and codimensions. Our method is based on randomly coloring the vertices of a triangulation of a manifold, and then considering the submanifolds that arise as strata of the induced Voronoi tessellation. Here is a brief description of the construction, which is given later in more detail.

\smallskip
\noindent
\textbf{Main Construction} (Definitions \ref{coloring}, \ref{voronoi}, \ref{class})
{
Let $\Sigma$ be a simplicial complex, metrized by the standard Euclidean metric on each simplex. For every point in~$\Sigma$ consider its closest vertex, or its set of closest vertices if it is equidistant from several ones. Suppose that the vertices of~$\Sigma$ are assigned colors in $\{1,\dots,k\}$. The \emph{$\mathcal{C}$~color class} of $\Sigma$, that corresponds to a subset~$\mathcal{C}$ of $m \leq k$ colors, comprises all points in $\Sigma$ whose closest vertices contain all colors in~$\mathcal{C}$.}
\smallskip

While this definition applies to simplicial complexes in general, producing random subcomplexes of their barycentric subdivisions, the focus of this paper is the ability to produce manifolds. In contrast to random gluing constructions, which often lead to non-manifolds in dimension three and higher, our approach always generates well-defined submanifolds. It produces PL-submanifolds when the ambient manifold is PL. Moreover, the valency of the resulting triangulated submanifolds is bounded, with a maximal valence determined by the combinatorics of the triangulation of the ambient manifold. We discuss this construction of submanifolds and prove basic results on their simplicial structure in Section~\ref{construction}.

When applied, for example, to a 2-dimensional triangulated closed surface colored with  2 colors, the construction gives rise to a random collection of disjoint closed loops on the surface. With 3 colors, it generates a set of points on the surface, together with three collections of curves bounded by these points, yielding a 3-valent embedded graph together with a collection of simple loops. Coloring the vertices of a 3-dimensional triangulated manifold with 2 colors gives rise to random surfaces, as do 3 colors in 4 dimensions, or $k$ colors in $d=k+1$ dimensions. In general, $k$-coloring a $d$-dimensional triangulated manifold yields a proper submanifold of dimension $d-k+1$, along with higher dimensional submanifolds with boundary. We often restrict attention to the proper submanifolds, those whose boundary lies only on the boundary of the ambient manifold. We usually avoid use of more colors than the dimension.

\smallskip
Topological constraints on the submanifolds generated by the model are imposed by the choice of ambient triangulated manifold. For example, when carried out in an orientable manifold, the construction always generates submanifolds that are also orientable. In Section~\ref{universality} we discuss the universality of this construction. Large families of submanifolds are generated from some choices for the $d$-dimensional triangulated manifold and the number of colors~$k$, but other settings impose limitations on the class of manifolds that can arise. In general, we characterize the obtainable manifolds as follows.

\smallskip
\noindent
\textbf{Theorem~\ref{null-cobordant-converse}.}
\textit{
A~closed combinatorial manifold~$N$ is obtainable from some $k$-colored closed manifold~$M$ if and only if $N$ is null-cobordant.} 
\smallskip

In a specified ambient manifold~$M$, not all null-cobordant submanifolds~$N$ of~$M$ can arise from colorings. We show that the Stiefel--Whitney classes~$w_j(M)$ impose such an obstruction.
Namely, if $N$ is obtained in a manifold~$M$ with trivial $j$th Stiefel-Whitney class $w_j(M)=0$, then $w_j(N)=0$ (Proposition~\ref{stiefelwhitney}). We also give positive results for balls and spheres in codimension one and two, where specified isotopy classes of embeddings $N \subset M$, such as knots, links and Seifert surfaces, are obtainable from some triangulations and colorings of~$M$ (Propositions~\ref{link}-\ref{seifertd}).

\smallskip
In Section~\ref{random}, we turn to examine natural probabilistic models of $m$-dimensional submanifolds in $d$-dimensional spheres, tori, and balls, for any $m = d-k+1$. These random constructions come with a subdivision parameter $n$ that may grow large, but the maximum valence of the triangulation remains bounded. We then consider random submanifolds that emerge from special cases of this construction. 

\smallskip
In Section~\ref{knots}, we focus on the previously studied case $d=k=3$, which produces random 1-dimensional submanifolds. This case leads to interesting percolation phenomena in an infinite 3-dimensional grid~\cite{sheffield2014tricolor}, and to models of random knots and links in a triangulated $n \times n \times n$ grid~\cite{de2019random}. We obtain new results in this setting, and settle an open problem raised by de Crouy-Chanel and Simon~\cite{de2019random}.  

\smallskip
\noindent
\textbf{Theorem~\ref{dfw}.}
\textit{
The probability of an unknot occurring in this model of random knots decays at an exponential rate as the subdivision parameter $n$ increases.}
\smallskip

We also rigorously establish other asymptotic properties, showing for example that the number of prime summands grows at least linearly (Proposition~\ref{composite}).

\smallskip
In Section~\ref{asymptotic}, we consider random $k$-colorings of $d$-dimensional manifolds in general, and study asymptotic properties of the $(m-1)$-codimensional submanifolds that arise from subsets of $m$ colors, as the number of simplices in the triangulation grows. Since the model generates random manifolds, one can take advantage of the special properties of manifolds that are not present in models that generate random subcomplexes.  In addition to questions of connectivity and percolation for the resulting subcomplex, one can study homology properties, making use of the special nature of the homology groups of manifolds and submanifolds. In this setting, we compute the expected Euler characteristic of a random submanifold. As the subdivision parameter $n$ grows, we obtain an asymptotic density per vertex that does not depend on the ambient manifold.

\smallskip
\noindent
\textbf{Theorem~\ref{ec}.}
\textit{The expected Euler Characteristic density of a random submanifold~$N$, corresponding to a set of $m$ colors $\mathcal{C} \subseteq \{1,\dots,k\}$, in a subdivided $d$-dimensional closed manifold~$M$, whose vertices are colored iid with probabilities $(p_1,\dots,p_k)$, is}
$$ \overline{\chi}(p_1,\dots,p_k) \;=\; (1+o(1))\; \sum_{r=m}^{d+1} \genfrac{\{}{\}}{0pt}{}{d+1}{r} \, (r-1)! \; \sum_{X \subseteq\, \mathcal{C}}(-1)^{|X|}\left(-\sum_{i \in X}p_i\right)^r $$
\smallskip

In Section~\ref{genus4}, we discuss how to combine generation of random surfaces and simplicial subdivision to search for the 4-ball genus of a curve, the smallest genus of a smooth surface in the 4-ball that spans a fixed curve. We use random colorings to approach the 4-ball genus via computer experiments. Such a project has potential implications for some of the fundamental problems in topology. For example, Manolescu and Piccirillo have recently described an approach to the four-dimensional smooth Poincare Conjecture~\cite{manolescu2021zero}. If certain knots in the 3-sphere are slice, meaning that they bound a smooth disk embedded in the 4-ball, then there exists a counterexample to this conjecture. Random colorings provide a method to search for slice disks spanning these knots, and more generally to upper bound the 4-ball genus of knots in the 3-sphere. This project has to date been run on knots with a small number of crossings. It is not yet clear whether determining the 4-ball genus experimentally is within the range of computational feasibility.

We mention another application of the Voronoi coloring to 4-manifolds. Rubinstein and Tillmann~\cite{rubinstein2020multisections} showed that a vertex 3-coloring which assigns all three colors to each 4-simplex of a 4-manifold leads to
a trisection of the manifold.

\section{Colored Manifolds}
\label{construction}

This section describes a construction of submanifolds of combinatorial manifolds that is based on vertex colorings. The construction incorporates a subdivision process allowing for increasingly fine triangulations, so we review in the discussion some standard ways to subdivide simplices, cubes, and triangulated manifolds. We then verify that the construction produces not just subcomplexes,  but rather properly embedded combinatorial submanifolds.  

\begin{definition}
\label{coloring}
Let $\Sigma$ be a simplicial complex, and let $\{C_1,C_2,\dots,C_k\}$ be a set of $k$ elements, called \emph{colors}. A~\emph{$k$-coloring} of $\Sigma$ is a map 
$$ c \;:\; \mathrm{vertices}(\Sigma) \;\;\to\;\; \{C_1,C_2,\dots,C_k\} $$
\end{definition}

We extend such a coloring to $|\Sigma|$, the geometric realization of $\Sigma$. A~simplicial complex~$\Sigma$ can be realized using \emph{barycentric coordinates} for every simplex $\sigma = v_0v_1\cdots v_d \in \Sigma$, so that the coordinates of every point represent it as a weighted sum of the vertices of~$\sigma$:
$$ |\sigma| \;=\; \left\{\left.(x_0,x_1,\dots,x_d) \in \mathbb{R}^{d+1} \;\right|\; {x_0 \geq 0,\;x_1 \geq 0, \;\dots,\; x_d \geq 0, \;x_0 + \dots + x_d = 1}\right\} \;.$$
Each  simplex is metrized using the Euclidean distance in $\mathbb{R}^{d+1}$. A coloring of each point extended from the vertex coloring is defined by assigning to each point the color, or colors, of its closest vertex. This is a multi-coloring, as points equidistant from several closest vertices may be assigned more than one color. Closest vertices correspond to a point's largest barycentric coordinates, as follows.

\begin{definition}
\label{voronoi}
Let $c$ be a $k$-coloring of a simplicial complex $\Sigma$ as above. The \emph{Voronoi coloring} of $|\Sigma|$ induced from $c$ is
$$ c \;:\; |\Sigma| \;\;\to\;\; 2^{\{C_1,C_2,\dots,C_k\}} $$
such that for every simplex $\sigma = v_0v_1\cdots v_d \in \Sigma$ and point $x = (x_0,\dots,x_d) \in |\sigma|$,
$$ c(x) \;=\; \left\{c(v_i) \;\left|\; i \in \{0,\dots,d\},\; x_i = \max\limits_{j \in \{0,\dots,d\}} x_j\right.\right\} $$
\end{definition}

This coloring is well defined, since lower-dimensional faces of $|\sigma|$ are obtained by setting some of the barycentric coordinates to zero, and the colors of the removed vertices are not relevant there. In other words, the assignment of closest vertices to a point  $x \in |\Sigma|$ is the same for all subsimplices of $\Sigma$ in which $x$ lies. 

Following standard terminology from graph coloring, subsets of the space that have a given color are called {color classes}, as in the next definition. 

\begin{definition}
\label{class}
In a $k$-colored simplicial complex $\Sigma$,
the \emph{color class} of a color $C_i$ is the set of points of $|\Sigma|$ that are assigned $C_i$ by the Voronoi coloring. Note that color classes may overlap at multicolored points. The \emph{$m$-color class} corresponding to the colors $\{C'_1,\dots,C'_m\}$ contains the points of $|\Sigma|$ having all these $m$ colors, and possibly others as well. The unique $k$-color class is also called the \emph{all-color class}.
\end{definition}

Consider a point that lies in an $m$-color class corresponding to $\{C'_1,\dots,C'_m\}$, and the set of vertices that are closest to that point. This set contains at least one vertex of each color in~$\{C'_1,\dots,C'_m\}$. In particular the all-color class is the set of points that are equidistant from the $k$ vertex-sets of the $k$ assigned colors. Some examples are shown in Figure~\ref{colorclasses}.

\begin{figure}
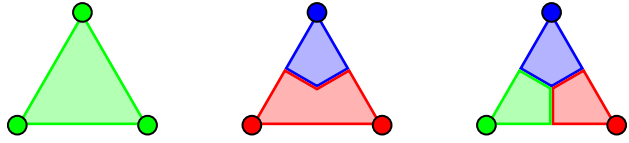

\centering
\vspace{0.5em}
\tikz{
\path[preaction={clip, postaction={draw=green, line width=2,fill=green!30}}] (330:1) -- (90:1) -- (210:1) -- cycle;
\node at (330:1)[circle,fill=green,inner sep=2.5pt,draw=black,line width=0.75]{};
\node at (90:1)[circle,fill=green,inner sep=2.5pt,draw=black,line width=0.75]{};
\node at (210:1)[circle,fill=green,inner sep=2.5pt,draw=black,line width=0.75]{};
}
\;\;\;\;\;\;\;\;
\tikz{
\path[preaction={clip, postaction={draw=blue, line width=2,fill=blue!30}}] (0,0) -- (30:0.5) -- (90:1) -- (150:0.5) -- cycle;
\node at (90:1)[circle,fill=blue,inner sep=2.5pt,draw=black,line width=0.75]{};
\path[preaction={clip, postaction={draw=red, line width=2,fill=red!30}}] (0,0) -- (30:0.5) -- (330:1) -- (210:1) -- (150:0.5) -- cycle;
\node at (210:1)[circle,fill=red,inner sep=2.5pt,draw=black,line width=0.75]{};
\node at (330:1)[circle,fill=red,inner sep=2.5pt,draw=black,line width=0.75]{};
}
\;\;\;\;\;\;\;\;
\tikz{
\path[preaction={clip, postaction={draw=blue, line width=2,fill=blue!30}}] (0,0) -- (30:0.5) -- (90:1) -- (150:0.5) -- cycle;
\node at (90:1)[circle,fill=blue,inner sep=2.5pt,draw=black,line width=0.75]{};
\path[preaction={clip, postaction={draw=red, line width=2,fill=red!30}}] (0,0) -- (30:0.5) -- (330:1) -- (270:0.5) -- cycle;
\node at (330:1)[circle,fill=red,inner sep=2.5pt,draw=black,line width=0.75]{};
\path[preaction={clip, postaction={draw=green, line width=2,fill=green!30}}] (0,0) -- (150:0.5) -- (210:1) -- (270:0.5) -- cycle;
\node at (210:1)[circle,fill=green,inner sep=2.5pt,draw=black,line width=0.75]{};
}

\vspace{0.5em}
\caption{Examples of 2-simplices colored with one, two and three colors. The 2-color classes are the arcs where two colors meet, and the center point of the rightmost triangle is a 3-color class. Note that the center point always has the colors of all vertices in the simplex.
}
\label{colorclasses}
\end{figure}

The partition of a simplicial complex into color classes admits discrete descriptions in terms of a cubical complex, or a simplicial complex in a refined triangulation. To explain these structures, we first review two subdivision processes, that convert between simplicial and cubical complexes. 

\subsubsection*{Cubing a Simplex}
Let $\sigma$ be a $d$-dimensional simplex, and suppose its $d+1$ vertices have different colors $C_0,C_1,\dots,C_d$. The color class of $C_i$ is the set of points where $x_i \geq x_j$ for all~$j$. Project this subset of the hyperplane $x_0+\dots+x_d=1$ to the hyperplane $x_i=1$, via scalar multiplication by~$1/x_i$. The homeomorphic image is a $d$-dimensional cube, as all the other $d$ coordinates range from $0$ to~$1$. 

If we restrict this projection to an $m$-color class that contains $C_i$ then the image is a $(d+1-m)$-dimensional face of the cube, with additional coordinates equal to ~$1$. Therefore, the $d+1$ cubes corresponding to the different colors consistently glue together along cubical faces, and the simplex~$\sigma$ admits a subdivision into a \emph{cubical complex}. Moreover, each face of $\sigma$ admits a similar cubing, consistent with the cubing of~$\sigma$.

\subsubsection*{Triangulating a Cube}
It is also possible to reverse this process, and subdivide a cube into simplices. The \emph{braid triangulation}~\cite{stanley2004introduction} of a $d$-dimensional cube into $d!$~simplices is based on the order types of the $d$ coordinates:
$$ [0,1]^d \;=\; \bigcup_{\pi \in S_d} \left\{ (x_1,\dots,x_d) \;|\; 0 \leq x_{\pi(1)} \leq x_{\pi(2)} \leq \cdots \leq x_{\pi(d)} \leq 1 \right\} $$
Each set in the union is a closed $d$-dimensional simplex. Its $d+1$ vertices are those vertices of the cube $\{0,1\}^d$ that satisfy the corresponding inequalities. For example, the square $[0,1]^2$ is divided into two triangles: $\boxslash\,$, corresponding to $x \leq y$ and $y \leq x$. See Figure~\ref{braid} for the braid triangulation of the 3-cube.

\begin{figure}
\centering
\vspace{0.5em}
\includegraphics[scale=2.5]{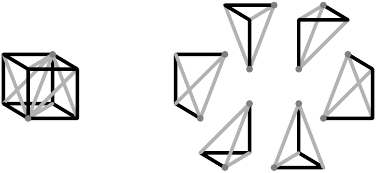}
\vspace{0.5em}
\caption{The braid triangulation of the cube~$[0,1]^3$. The 3-simplices correspond to all $3!=6$ orderings $x_i \leq x_j \leq x_k$ of the coordinates $x_1,x_2,x_3$. They all meet at the diagonal edge $x_1=x_2=x_3$, and some pairs intersect at 2-simplices on the planes $x_i = x_j$. On cubical 2-faces, they induce $\boxslash$, the braid triangulation of the 2-cube.  
}
\label{braid}
\end{figure}

\medskip

\subsubsection*{Barycentric Subdivision}
These two procedures can be combined, first partitioning a simplex into cubes, and  then triangulating the resulting cubes with simplices. Note that the cubes from the first subdivision are glued to preserve the orientation of each coordinate in $[0,1]$, so that the subdivision agrees on common low-dimensional faces. The combination produces the well-known \emph{barycentric subdivision} of a simplex into $(d+1)!$ smaller simplices.

Thus, after one barycentric subdivision of the simplicial complex $\Sigma$, the color classes have a discrete description as unions of simplices, which is not immediate from Definition~\ref{voronoi}. Each $d$-dimensional simplex in the barycentric subdivision is assigned the color of the unique original vertex it contains. Taking intersections of several colors gives the following description for color classes.

\begin{lemma}
\label{subcomplex}
Given a colored simplicial complex $\Sigma$, with barycentric subdivision $\Sigma'$, each $m$-color class $X$ is  a subcomplex of $\Sigma'$. The faces of $X$ are those simplices in $\Sigma'$ whose vertices are barycenters of original faces of $\Sigma$ whose vertices include each color defining $X$.
\end{lemma}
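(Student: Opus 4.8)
The plan is to identify, for each point $x \in |\Sigma|$, the unique open simplex of the barycentric subdivision $\Sigma'$ whose relative interior contains $x$, and then to read its Voronoi color directly off the combinatorics of that simplex. Recall that a simplex of $\Sigma'$ is the convex hull of barycenters $\hat\tau_0,\dots,\hat\tau_j$ of a strictly increasing chain of faces $\tau_0 \subsetneq \tau_1 \subsetneq \cdots \subsetneq \tau_j$ of $\Sigma$. Given $x$, let $\sigma = v_0\cdots v_d$ be its carrier in $\Sigma$, so that all barycentric coordinates $x_0,\dots,x_d$ of $x$ in $\sigma$ are positive; let $a_1 > a_2 > \cdots > a_s$ be their distinct values and put $\tau_t = \{v_i : x_i \geq a_t\}$, which gives a chain $\tau_1 \subsetneq \cdots \subsetneq \tau_s = \sigma$.

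First I would check, by a direct computation of barycentric coordinates, that $x$ lies in the relative interior of the simplex $\hat\tau_1\cdots\hat\tau_s$ of $\Sigma'$: a convex combination $\sum_t \lambda_t \hat\tau_t$ assigns to the vertex $v_i$ the weight $\sum_{t\,:\,v_i\in\tau_t}\lambda_t/|\tau_t|$, which depends only on the index $\min\{t : v_i\in\tau_t\}$ and is strictly decreasing in it, so that matching these indices to the ranking of the $x_i$ among the $a_t$ determines the $\lambda_t$ uniquely and with positive values. In particular the largest coordinates of $x$, those equal to $a_1$, sit exactly on the vertices of $\tau_1$, so by Definition~\ref{voronoi} we get $c(x) = \{c(v) : v \in \mathrm{vertices}(\tau_1)\}$. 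Hence $x$ lies in the $m$-color class $X$ of $\mathcal{C} = \{C'_1,\dots,C'_m\}$ if and only if $\mathcal{C} \subseteq \{c(v) : v \in \mathrm{vertices}(\tau_1)\}$.

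Now let $Y$ be the set of simplices $\hat\tau_0\cdots\hat\tau_j$ of $\Sigma'$ such that every $\tau_i$ has all colors of $\mathcal{C}$ among the colors of its vertices; this is precisely the collection described in the statement. Deleting vertices from a chain preserves the defining condition, so $Y$ is closed under passing to faces, i.e.\ $Y$ is a subcomplex of $\Sigma'$. It remains to verify that $|Y| = X$ as subsets of $|\Sigma|$. If $x \in X$ then $\tau_1$ carries all of $\mathcal{C}$, and since $\tau_1 \subseteq \tau_t$ for each $t$, so does every $\tau_t$; thus the carrier $\hat\tau_1\cdots\hat\tau_s$ of $x$ in $\Sigma'$ belongs to $Y$, so $x \in |Y|$. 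Conversely, if $x \in |Y|$ then $x$ lies in some simplex of $Y$, and its carrier in $\Sigma'$ is a face of that simplex, hence itself a simplex of $Y$; its minimal vertex $\hat\tau_1$ then has $\tau_1$ carrying all of $\mathcal{C}$, so $c(x) \supseteq \mathcal{C}$ and $x \in X$.

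I expect the only real obstacle to be the bookkeeping in the second paragraph, namely matching the metric data — which vertices of $\Sigma$ are closest to $x$, equivalently which barycentric coordinates of $x$ are largest — with the combinatorial data telling which barycenter is the smallest face in the flag that indexes the carrier of $x$ in $\Sigma'$. Once this is pinned down, everything else is formal, given the standard description of the barycentric subdivision and the partition of $|\Sigma|$ into relative interiors of simplices of $\Sigma'$; and the case where $x$ lies on a proper face of a simplex of $\Sigma$ needs no separate treatment, being already covered by the well-definedness remark following Definition~\ref{voronoi}.
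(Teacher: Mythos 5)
Your argument is correct and complete. The key computation — that a point $x$ with carrier $\sigma$ in $\Sigma$ lies in the relative interior of the simplex of $\Sigma'$ indexed by the flag $\tau_1\subsetneq\cdots\subsetneq\tau_s$ of level sets of its barycentric coordinates, with the Voronoi colors of $x$ read off from the minimal flag element $\tau_1$ — checks out (the triangular system gives $\lambda_t=|\tau_t|(a_t-a_{t+1})>0$, and the maximal coordinates are exactly those on the vertices of $\tau_1$), and the remaining verifications that $Y$ is face-closed and that $|Y|=X$ are formal consequences. Your route differs from the paper's: the paper does not prove the lemma by a pointwise computation but derives it from the preceding structural discussion, in which each $d$-simplex is first decomposed into $d+1$ cubes (the regions where a given coordinate is maximal, projected via $x\mapsto x/x_i$), each cube is then cut by the braid triangulation, and the composite is recognized as the barycentric subdivision; a top simplex of $\Sigma'$ thus lies in exactly one cube and inherits the color of the unique original vertex it contains, and $m$-color classes are intersections of these unions. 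Both arguments ultimately rest on the same correspondence between \emph{largest coordinate} and \emph{smallest face of the flag}, but yours is a self-contained direct verification, whereas the paper's buys in addition the cubical-complex viewpoint that it reuses later (e.g.\ in Definition~\ref{scheme} and the counting argument in Theorem~\ref{ec}).
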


We now apply the Voronoi coloring to triangulated manifolds, rather than general simplicial complexes. Recall that the \emph{link} of $\sigma$ in $\Sigma$ is $\lk_\Sigma(\sigma)=\{\tau \;|\; \tau \cup \sigma \in \Sigma, \; \tau \cap \sigma = \varnothing\}$. A~\emph{combinatorial} or \emph{PL manifold} $M$ of dimension $d$ is a $d$-dimensional simplicial complex with the property that the link of every vertex is a $(d - 1)$-dimensional combinatorial sphere, meaning a combinatorial manifold PL-homeomorphic to the boundary of the $d$-dimensional simplex. In a combinatorial manifold, the link of every simplex of codimension~$r$ is a combinatorial~$S^{r-1}$. In a $d$-dimensional \emph{combinatorial manifold $M$ with boundary}, the link of every vertex is either a combinatorial $(d-1)$-sphere or a combinatorial $(d-1)$-disk, the latter meaning PL-homeomorphic to a $(d-1)$-dimensional simplex. The link of every simplex of codimension~$m$ is similarly a combinatorial $S^{m-1}$ or~$D^{m-1}$. The boundary $\partial M$ is a $(d-1)$-dimensional combinatorial manifold without boundary, that contains all simplices whose link is a disk.

If we color the vertices of a combinatorial manifold $M$ with $k$ colors as in Definition~\ref{coloring}, for some $2 \le k \le d$, then this coloring induces a Voronoi multi-coloring of $M$ as described in Definition~\ref{voronoi}. This defines color classes and $m$-color classes in $M$, which coincide with subcomplexes of its barycentric subdivision, as described in the foregoing discussion. Our purpose in the rest of this section is to show that these are submanifolds of $M$. 

We first consider the $k$-color class~$X$, consisting of points where all $k$ Voronoi regions meet. We will show that $X$ is a proper submanifold of~$M$, meaning that $X$ is a manifold, and if it has non-empty boundary then  $\partial X$ is a submanifold of~$\partial M$. This result depends essentially on the fact that we use \emph{simplices} and barycentric coordinates for the construction of Voronoi regions. The example in Figure~\ref{figure} shows how the 2-color class is not a 1-manifold in a Voronoi coloring of a square.

\begin{figure}
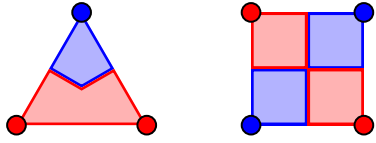

\centering
\vspace{0.5em}
\tikz{
\path[preaction={clip, postaction={draw=blue, line width=2,fill=blue!30}}] (0,0) -- (30:0.5) -- (90:1) -- (150:0.5) -- cycle;
\node at (90:1)[circle,fill=blue,inner sep=2.5pt,draw=black,line width=0.75]{};
\path[preaction={clip, postaction={draw=red, line width=2,fill=red!30}}] (0,0) -- (30:0.5) -- (330:1) -- (210:1) -- (150:0.5) -- cycle;
\node at (210:1)[circle,fill=red,inner sep=2.5pt,draw=black,line width=0.75]{};
\node at (330:1)[circle,fill=red,inner sep=2.5pt,draw=black,line width=0.75]{};
}
\;\;\;\;\;\;\;\;
\tikz{
\path[preaction={clip, postaction={draw=blue, line width=2,fill=blue!30}}] (0,0) -- (-0.75,0) -- (-0.75,-0.75) -- (0,-0.75) -- cycle;
\node at (-0.75,-0.75)[circle,fill=blue,inner sep=2.5pt,draw=black,line width=0.75]{};
\path[preaction={clip, postaction={draw=blue, line width=2,fill=blue!30}}] (0,0) -- (0.75,0) -- (0.75,0.75) -- (0,0.75) -- cycle;
\node at (0.75,0.75)[circle,fill=blue,inner sep=2.5pt,draw=black,line width=0.75]{};
\path[preaction={clip, postaction={draw=red, line width=2,fill=red!30}}] (0,0) -- (0.75,0) -- (0.75,-0.75) -- (0,-0.75) -- cycle;
\node at (0.75,-0.75)[circle,fill=red,inner sep=2.5pt,draw=black,line width=0.75]{};
\path[preaction={clip, postaction={draw=red, line width=2,fill=red!30}}] (0,0) -- (-0.75,0) -- (-0.75,0.75) -- (0,0.75) -- cycle;
\node at (-0.75,0.75)[circle,fill=red,inner sep=2.5pt,draw=black,line width=0.75]{};
}
\vspace{0.5em}
\caption{A 2-coloring of a 2-simplex gives a 2-color class that is a 1-manifold. The intersection of the blue and red regions in a square is a 1-complex that is not a 1-manifold at the central vertex, where four blue-red arcs meet at a point.
}
\label{figure}
\end{figure}

\begin{lemma} \label{manifold_in_simplex}
Let $X$ be the $k$-color class inside a $d$-simplex~$\sigma$ that is colored with $k$ distinct colors. Then $X$ is PL-homeomorphic to a $(d-k+1)$-dimensional disk in $\sigma$, with boundary a $(d-k)$-dimensional sphere contained in $\partial \sigma$.
\end{lemma}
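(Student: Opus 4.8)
The plan is to replace $X$ by an explicit combinatorial model, the order complex of a poset that splits as a product, and then read off the ball and its boundary sphere from the product structure. Label the vertices of $\sigma$ as $v_0,\dots,v_d$; since they carry $k$ distinct colors they partition into nonempty monochromatic blocks $S_1,\dots,S_k$ with $|S_\ell|=a_\ell$ and $\sum_{\ell}a_\ell=d+1$. Call a face of $\sigma$ \emph{rainbow} if it meets every block $S_\ell$; the rainbow faces form an upward-closed subposet $\mathcal P$ of the face poset of $\sigma$. By Lemma~\ref{subcomplex}, in the barycentric subdivision $\sigma'$ the $k$-color class $X$ is the subcomplex whose simplices are the flags $\tau_0\subsetneq\cdots\subsetneq\tau_j$ of faces of $\sigma$ with every $\tau_i$ rainbow; since $\mathcal P$ is upward closed, a flag lies entirely in $\mathcal P$ as soon as its minimum $\tau_0$ does, so $X$ is exactly the order complex $\Delta(\mathcal P)$, realized inside $|\sigma|$.

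Next I would factor $\mathcal P$. Sending a face $T$ to $(T\cap S_1,\dots,T\cap S_k)$ is a poset isomorphism $\mathcal P\cong\prod_{\ell=1}^{k}\mathcal F_\ell$, where $\mathcal F_\ell$ is the poset of nonempty subsets of $S_\ell$ — that is, the face poset of the simplex $\Delta^{a_\ell-1}$ spanned by $S_\ell$, so $\Delta(\mathcal F_\ell)$ is the barycentric subdivision of $\Delta^{a_\ell-1}$, a PL $(a_\ell-1)$-ball. I would then invoke the standard fact that the order complex of a finite product of posets is PL-homeomorphic to the product of the order complexes, realized by sending a vertex $(U_1,\dots,U_k)$ of $\Delta(\prod_\ell\mathcal F_\ell)$ to the tuple of barycenters $(\widehat U_1,\dots,\widehat U_k)$ and extending affinely over each simplex (for $k=2$ this is the staircase triangulation of a product of two simplices; the general case follows by iterating, or from the nerve functor and geometric realization preserving finite products). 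This yields a PL homeomorphism
$$ X \;=\; |\Delta(\mathcal P)| \;\cong\; \prod_{\ell=1}^{k}|\Delta(\mathcal F_\ell)| \;\cong\; \prod_{\ell=1}^{k}\Delta^{a_\ell-1}, $$
and the right-hand side is a PL ball of dimension $\sum_{\ell=1}^{k}(a_\ell-1)=d-k+1$.

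It remains to locate $\partial X$ inside $\partial\sigma$. A simplex $\widehat\tau_0\cdots\widehat\tau_j$ of $\sigma'$ lies in $\partial\sigma$ exactly when $\tau_j$ is a proper face of $\sigma$, so $X\cap\partial\sigma$ is the subcomplex $Y\subseteq\Delta(\mathcal P)$ consisting of the flags whose top element is not all of $\{v_0,\dots,v_d\}$. Under the product isomorphism the top element of $\mathcal P$ corresponds to the tuple $(S_1,\dots,S_k)$ of top elements, so a flag avoids it iff its $\ell$-th projection fails to reach $S_\ell$ for some $\ell$; and since the homeomorphism above carries the relative interior of a simplex of $\Delta(\prod_\ell\mathcal F_\ell)$ into the product of the relative interiors of its coordinate projections, this is precisely the condition that the image point lies in $\partial\big(\prod_\ell\Delta^{a_\ell-1}\big)$. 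Hence the PL homeomorphism restricts to a PL homeomorphism of $Y$ onto the boundary sphere $\partial\big(\prod_\ell\Delta^{a_\ell-1}\big)\cong S^{d-k}$, and therefore $\partial X=Y=X\cap\partial\sigma\subseteq\partial\sigma$, as claimed.

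The only step with genuine content is the assertion that the order complex of a product of posets is PL-homeomorphic to the product of the order complexes, together with the accompanying fact that relative interiors of simplices map into products of relative interiors of projections; I would cite this rather than reprove it, and everything else is bookkeeping in the face poset of $\sigma$. A reader wanting a self-contained argument could instead construct the homeomorphism $X\cong\prod_\ell\Delta^{a_\ell-1}$ directly in barycentric coordinates, rescaling the coordinates in each block $S_\ell$ by their common maximum $\max_i x_i$; this works but requires handling the coupling imposed by $\sum_i x_i=1$, and is longer.
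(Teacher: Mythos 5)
Your proof is correct, and it takes a genuinely different route from the one in the paper. You work entirely combinatorially: via Lemma~\ref{subcomplex}, $X$ is the full subcomplex of the barycentric subdivision on the barycenters of ``rainbow'' faces, i.e.\ the order complex of the poset $\mathcal P$ of faces meeting every color block; the block decomposition $T\mapsto(T\cap S_1,\dots,T\cap S_k)$ factors $\mathcal P$ as a product of face posets of simplices, and the standard PL identification of the order complex of a product poset with the product of the order complexes exhibits $X$ as $\prod_\ell\Delta^{a_\ell-1}$, a $(d-k+1)$-ball; your bookkeeping with tops of flags correctly matches $X\cap\partial\sigma$ with the boundary of the product. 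The paper instead argues geometrically: it constructs a self-homeomorphism $h$ of $\sigma$ (built skeleton by skeleton, coning from the barycenter) carrying the Voronoi coloring to a ``linear'' coloring pulled back along the block-summation map $\phi:\sigma\to\Delta^{k-1}$, under which the $k$-color class becomes $\phi^{-1}(1/k,\dots,1/k)$, a convex set cut out by $k-1$ independent hyperplanes containing an interior point of $\sigma$ — manifestly a properly embedded disk. The two arguments are closely related: the paper's fiber $\phi^{-1}(1/k,\dots,1/k)$ decouples over the color blocks into exactly the product of scaled simplices $\prod_\ell\tfrac1k\Delta^{a_\ell-1}$ that your poset factorization produces. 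What your version buys is the explicit simplicial structure of $X$ as a triangulated product without any auxiliary isotopy, at the cost of importing the product-of-order-complexes theorem (which is classical and fine to cite, but should be cited precisely); what the paper's version buys is convexity of the model for $X$ and the equivalence of the Voronoi and linear colorings, which it reuses later (e.g.\ in Lemma~\ref{submanifold}, where the $m$-color class for $m<k$ is described by the same equalities together with inequalities, and in Lemma~\ref{double}). Your closing remark about rescaling barycentric coordinates within each block is essentially the paper's map $h\circ\phi$ in disguise.
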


\begin{proof}
The structure of $X$ is revealed by considering another multicoloring of the the simplex~$\sigma$, which will be shown to be equivalent to the Voronoi coloring. 

Label the vertices of $\sigma$ by $\{v_0, v_1, \dots, v_d\}$. For each of the $k$ colors $C_1,C_2,\dots,C_k$ we consider the vertices assigned that color, and define the index set of vertices colored $C_j$ to be $I_j = \{i : c(v_i)=C_j\}$. By assumption, these sets are nonempty, and their disjoint union is $\{0,\dots,d\}$. Using this partition, we define a linear map from the $d$-dimensional simplex $\sigma$ to $\mathbb{R}^k$, given in terms of the barycentric coordinates as follows.
$$ \phi(x_0,x_1,\dots,x_d) \;=\; \textstyle \left(\sum\nolimits_{i\in I_1}x_i\,,\,\sum\nolimits_{i\in I_2}x_i\,,\,\dots\,,\,\sum\nolimits_{i\in I_k}x_i \right) $$
The image of $\sigma$ under $\phi$ is exactly the $(k-1)$-dimensional simplex, also given by barycentric coordinates. We color its $k$ vertices by the distinct colors $(C_1,\dots,C_k)$. The \emph{linear} coloring $c^{\star}$ of~$\sigma$ is the pullback under~$\phi$ from the Voronoi coloring of the $(k-1)$-simplex. Namely,
$$ c^{\star}(x_0,x_1,\dots,x_d) \;=\; \textstyle \left\{C_j \;\left|\; \sum\nolimits_{i \in I_j} x_i = \max\limits_{l \in \{1,\dots,k\}} \sum\nolimits_{i \in I_l} x_i\right.\right\} $$
Note that the linear coloring gives a consistent coloring when restricted to faces of $\sigma$.  Figure~\ref{comparecolorings} shows the two colorings $c$ and $c^{\star}$ in the case $d=k=2$. 

\begin{figure}[htbp]
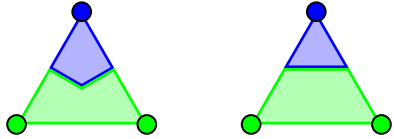

\centering
\vspace{0.5em}
\tikz{
\path[preaction={clip, postaction={draw=blue, line width=2,fill=blue!30}}] (0,0) -- (30:0.5) -- (90:1) -- (150:0.5) -- cycle;
\node at (90:1)[circle,fill=blue,inner sep=2.5pt,draw=black,line width=0.75]{};
\path[preaction={clip, postaction={draw=green, line width=2,fill=green!30}}] (0,0) -- (30:0.5) -- (330:1) -- (210:1) -- (150:0.5) -- cycle;
\node at (210:1)[circle,fill=green,inner sep=2.5pt,draw=black,line width=0.75]{};
\node at (330:1)[circle,fill=green,inner sep=2.5pt,draw=black,line width=0.75]{};
}
\;\;\;\;\;\;\;\;
\tikz{
\path[preaction={clip, postaction={draw=blue, line width=2,fill=blue!30}}] (30:0.5) -- (90:1) -- (150:0.5) -- cycle;
\node at (90:1)[circle,fill=blue,inner sep=2.5pt,draw=black,line width=0.75]{};
\path[preaction={clip, postaction={draw=green, line width=2,fill=green!30}}] (30:0.5) -- (330:1) -- (210:1) -- (150:0.5) -- cycle;
\node at (210:1)[circle,fill=green,inner sep=2.5pt,draw=black,line width=0.75]{};
\node at (330:1)[circle,fill=green,inner sep=2.5pt,draw=black,line width=0.75]{};
}

\vspace{0.5em}
\caption{On the left the Voronoi 2-coloring of a 2-simplex. On the right, its linear 2-coloring~$c^{\star}$.
}
\label{comparecolorings}
\end{figure}

Topologically the two colorings are equivalent. We show this by defining a PL homeomorphism $h$ from $\sigma$ to itself which is isotopic to the identity and takes one coloring to the other. The map $h$ is constructed successively on each skeleton of~$\sigma$. In a monochromatic simplex, where $k=1$, let $h$ be the identity map. Otherwise, $h$ is already defined on every face of $\sigma$ and we extend it to the interior. The coloring $c$ assigns all $k$ colors to the interior point $p = (\tfrac{1}{d+1}, \dots, \tfrac{1}{d+1})$. For $i \in \{0,\dots,d\}$ let $n_i$ be the number of vertices colored with $c(v_i)$, so $n_i = |I_j|$ where $c(v_i) = C_j$. Define the map $h$ to take $p$ to the point
$$ h(p) \;=\; \left(\frac{1}{k\,n_0}, \,\frac{1}{k\,n_1}, \,\dots,\, \frac{1}{k\,n_d} \right) \;.$$
Note that $h(p)$ is assigned all $k$ colors by the linear coloring~$c^{\star}$, because the barycentric coordinates of each set $I_j$ sum to~$1/k$. Consider a boundary point $q \in \partial \sigma$. All the points within the straight segment between $p$ and~$q$ have the same Voronoi coloring~$c(q)$, since a convex combination with $p$ preserves the order relations between the coordinates. Similarly, all the points within the straight segment between $h(p)$ and~$h(q)$ are assigned by the linear coloring~$c^{\star}$ the same set of colors $c^{\star}(h(q)) = c(q)$. We therefore extend the PL map~$h$ linearly to all interior points of $\sigma$, and then we have $c^{\star} \circ h = c$ on the whole simplex. Since the point $p$ is the barycenter of~$\sigma$, the map $h$ is linear on the simplices of the barycentric subdivision, and yields a combinatorially equivalent subdivision of $\sigma$ into simplices. The color classes of~$c^{\star}$ are given by the same  subcomplexes as those of $c$ in the barycentric subdivision.

Having established the equivalence of these two triangulations, we show that $h(X)$ is a properly embedded disk in~$\sigma$. Under the coloring~$c^{\star}$, the $k$-color class $h(X)$ is given by $\phi^{-1}(1/k,\dots,1/k)$, which is defined by the linear equations  
$$
\sum\nolimits_{i \in I_1} x_i \;=\; \sum\nolimits_{i \in I_2} x_i \;=\; \dots \;=\; \sum\nolimits_{i \in I_k} x_i \;=\; \frac1k \;.
$$
The last equality is redundant because the sum of all barycentric coordinates is~$1$.
Hence the set $h(X)$ is an intersection of $(k-1)$ independent hyperplanes with the $d$-simplex, which is a convex set. This intersection contains the interior point~$h(p)$, so it is a properly embedded $(d-k+1)$-disk, and its intersection with $\partial \sigma$ is a $(d-k)$-sphere. By the homeomorphism~$h$, the same holds for the $k$-color class~$X$, as stated in the lemma. 
\end{proof}

\begin{lemma} \label{manifold_in_M}
Let $X$ be the $k$-color class inside a combinatorial $d$-manifold $M$, colored using $k$ colors, for $2 \leq k \leq d$. Then $X$ is a union of simplices that form a proper combinatorial $(d-k+1)$-dimensional submanifold of~$M$. The boundary~$\partial X$ is the submanifold~$X \cap \partial M$.
\end{lemma}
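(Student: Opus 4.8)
The plan is to reduce the global statement to the local model established in Lemma~\ref{manifold_in_simplex}, using the link structure of a combinatorial manifold together with the compatibility of the cubing/braid subdivisions on faces. First I would recall from Lemma~\ref{subcomplex} that, after one barycentric subdivision $M'$ of $M$, the $k$-color class $X$ is a subcomplex of $M'$: its faces are the simplices of $M'$ spanned by barycenters $\hat\tau$ of those faces $\tau$ of $M$ whose vertex set carries all $k$ colors. In particular each simplex of $M$ meeting $X$ must have at least $k$ vertices and receive all $k$ colors, so the minimal such faces $\tau$ (those whose removal of any vertex loses a color) have dimension at least $k-1$. The goal is to show that about every point $x \in X$ there is a neighborhood in $M$ in which $X$ looks like $\mathbb{R}^{d-k+1}$ sitting standardly in $\mathbb{R}^d$ (or a half-space pair, when $x \in \partial M$).

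The key step is the local analysis at a point $x$ lying in the relative interior of a simplex $\sigma$ of $M$. Let $\tau \subseteq \sigma$ be the (unique) minimal face of $\sigma$ whose vertex set already carries all $k$ colors and such that $x$'s closest-vertex set lies among the vertices of $\tau$ — equivalently, $\tau$ is the face for which $\hat\tau$ is a vertex of the carrier of $x$ in $M'$. A regular neighborhood of $x$ in $M$ is PL-homeomorphic to $|\tau| \times \mathrm{cone}(\lk_M(\tau))$, where $\lk_M(\tau)$ is a combinatorial sphere $S^{d-\dim\tau-1}$ (or a disk $D^{d-\dim\tau-1}$ if $\tau \subseteq \partial M$, in which case $\tau$'s link in $\partial M$ is its boundary sphere). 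Within this product neighborhood, the Voronoi coloring depends only on the barycentric coordinates relative to $\tau$: the coloring is constant along the $\lk_M(\tau)$-cone direction near $x$ because adding weight on vertices outside $\tau$ only strictly decreases their ties with the maximal coordinates once we are close enough to a point whose maxima are supported on $\tau$. Hence, near $x$, $X$ is $(\text{$k$-color class of } |\tau|) \times \mathrm{cone}(\lk_M(\tau))$. By Lemma~\ref{manifold_in_simplex} applied to the simplex $\tau$ colored with exactly $k$ colors, the first factor is a properly embedded $(\dim\tau - k + 1)$-disk in $|\tau|$, with $x$ either interior to that disk or (if $x$ lies on $\partial|\tau|$, i.e.\ $x$ lies in a proper face of $\tau$) the analysis is inductively pushed to that smaller face. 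Therefore a neighborhood of $x$ in $X$ is $D^{\dim\tau-k+1} \times D^{d-\dim\tau} \cong D^{d-k+1}$ when $\tau \not\subseteq \partial M$, and a half-ball of the same dimension when $\tau \subseteq \partial M$, which is exactly the local picture of a proper $(d-k+1)$-submanifold.

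Assembling these local models: since $X$ is a subcomplex of $M'$ and every point has a PL-standard neighborhood of the asserted dimension, $X$ is a combinatorial $(d-k+1)$-manifold PL-embedded in $M$; the boundary points of $X$ are exactly those $x$ whose relevant face $\tau$ lies in $\partial M$, equivalently the points where $X$ meets $\partial M$, so $\partial X = X \cap \partial M$ and the embedding is proper. I expect the main obstacle to be the claim that the coloring is locally constant in the link-cone direction and, relatedly, that the regular neighborhood of $x$ splits as a product compatibly with the color classes — one has to check that the barycentric-coordinate description of the Voronoi regions on $\sigma$ restricts correctly to all faces and matches across the simplices of $\mathrm{star}_M(\tau)$, using precisely the face-consistency of the linear coloring $c^\star$ and the homeomorphism $h$ from the proof of Lemma~\ref{manifold_in_simplex} (applied simplex-by-simplex in a way that agrees on overlaps). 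Once that compatibility is in hand, the manifold conclusion and the identification of $\partial X$ are formal consequences of the local models.
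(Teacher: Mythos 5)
Your overall strategy is the same as the paper's: reduce to the local model of Lemma~\ref{manifold_in_simplex} and use the fact that in a combinatorial manifold the star of a face splits as the face times the cone on its link, the link being a combinatorial sphere or disk. The paper packages this as a computation of the vertex links $\lk_X(v_\sigma)$ in the subcomplex $X\subset M'$, obtaining joins $S^{m-k}\ast S^{d-m-1}$ (resp.\ $S^{m-k}\ast D^{d-m-1}$ when $\sigma\in\partial M$); your product of disks $D^{\dim\tau-k+1}\times D^{d-\dim\tau}$ is the corresponding statement about point neighborhoods. The tangential/normal splitting of the Voronoi coloring that you flag as the main obstacle is indeed the key verification, and your sketch of why it holds is correct.

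There is, however, a genuine error in your boundary analysis. You key the dichotomy ``full ball versus half ball'' to whether $\tau\subseteq\partial M$, where $\tau$ is the face spanned by the closest-vertex set of $x$. But in the decomposition $|\tau|\times\mathrm{cone}(\lk_M(\tau))$ the point $x$ sits at an \emph{interior} point of the cone factor whenever $x\notin\partial M$, even if $\lk_M(\tau)$ is a disk: the points of the open star of $\tau$ that lie on $\partial M$ correspond only to the sub-cone $\mathrm{cone}(\lk_{\partial M}(\tau))$, not to all of $\mathrm{cone}(\lk_M(\tau))$. Concretely, take $d=2$, $k=2$, and $M$ a single $2$-simplex $v_0v_1v_2$ with $v_0$ red and $v_1,v_2$ blue: $X$ is an arc from the midpoint of $v_0v_1$ through the center to the midpoint of $v_0v_2$. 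A point $x$ of this arc strictly between the midpoint of $v_0v_1$ and the center has $\tau=v_0v_1\subseteq\partial M$, yet $x$ is an interior point of the arc, not a boundary point. So your criterion misclassifies such points, and the asserted equivalence ``$\tau\subseteq\partial M$ iff $x\in\partial M$'' (hence the identification $\partial X=X\cap\partial M$) does not follow as written. The fix is to organize the local model around the carrier $\sigma$ of $x$ in $M$, the face whose relative interior contains $x$: a neighborhood of $x$ splits as a neighborhood of $x$ in $|\sigma|$ times $\mathrm{cone}(\lk_M(\sigma))$, the tangential factor meets $X$ in the interior of the properly embedded disk of Lemma~\ref{manifold_in_simplex} because that disk is proper and $x$ lies in $\mathrm{int}|\sigma|$, and $\lk_M(\sigma)$ is a disk exactly when $\sigma\subseteq\partial M$, i.e., exactly when $x\in\partial M$. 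This is in effect what the paper does: it only ever examines the barycenters $v_\sigma$, whose carrier in $M$ is $\sigma$ itself, so its case split on $\sigma\in\partial M$ automatically coincides with the point lying on $\partial M$.
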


\begin{proof}
Let $M'$ be the combinatorial $d$-manifold obtained from $M$ by barycentric subdivision. The vertices of $M'$ correspond to barycenters of faces in~$M$, of all dimensions $0,\dots,d$, and the faces of~$M'$ correspond to increasing chains of faces in~$M$, ordered by inclusion. By Lemma~\ref{subcomplex}, the $k$-color class $X$ is a subcomplex of~$M'$, and by Lemma~\ref{manifold_in_simplex} it is a union of $(d-k+1)$-dimensional closed simplices. Moreover, $X$ is induced from~$M'$ by restriction to a subset of its vertices, the barycenters of faces in~$M$ having vertices of each color. 

To show that $X$ is a combinatorial manifold, we check the links of its vertices. Let $\sigma \in M$ be a face whose vertices are together assigned all $k$ colors. Its barycenter $v_\sigma$ is a vertex of the subcomplex~$X \subset M'$. We first consider the case where $M$ has no boundary.

If $\dim \sigma=d$, then the link $\lk_{M'}(v_\sigma)$ is contained in~$\sigma$, because its neighbors are $v_\tau$ for all subfaces $\tau \subset \sigma$. Restricting to $k$-colored faces in~$M'$, also $\lk_X(v_\sigma)$ is contained in~$\sigma$. By Lemma~\ref{manifold_in_simplex}, $v_\sigma$ is an internal vertex of the combinatorial $(d-k+1)$-disk $\sigma \cap X$. Hence $\lk_X(v_\sigma)$ is a combinatorial $(d-k)$-sphere. 

Consider $\sigma \in M$ with exactly $k$ vertices of different colors. Since $M$ is combinatorial, $\lk_M(\sigma)$ is a combinatorial $(d-k)$-sphere. It includes all faces $\tau \in M$ disjoint from~$\sigma$ such that $\sigma \cup \tau \in M$. Its barycentric subdivision is the restriction of~$M'$ to the vertices~$v_\tau$. Also $\lk_X(v_\sigma)$ contains the vertices $v_{\sigma \cup \tau}$ for exactly the same set of faces~$\tau$ in~$M$, as these are the $k$-colored neighbors of~$v_\sigma$. This bijection naturally extends to higher dimensional simplices in the two links. Thus $\lk_X(v_\sigma)$ is isomorphic to the subdivided $(d-k)$-sphere~$\lk_M(\sigma)$.

Interpolating between these two cases, suppose $\dim \sigma = m$ for some $k-1 < m < d$. By Lemma~\ref{manifold_in_simplex}, $\lk_X(v_\sigma)$ contains a combinatorial $(m-k)$-sphere, with vertices $v_\tau$ for faces $\tau \subset \sigma$ that have vertices of each color. An embedding of $\lk_M(\sigma)$ in $\lk_X(v_\sigma)$ as above shows that $\lk_X(v_\sigma)$ also contains a disjoint combinatorial $(d-m-1)$-sphere, with vertices $v_\tau$ for all faces $\tau \supset \sigma$. The faces of $\lk_X(v_\sigma)$ are increasing chains of vertices from the two spheres. These are all unions of two faces from the two simplicial complexes, which is their \emph{join}, giving another combinatorial sphere $S^{m-k} \ast S^{d-m-1} = S^{d-k}$.

In conclusion, since all links of the vertices of $X$ are combinatorial $(d-k)$-spheres, $X$~is a combinatorial $(d-k+1)$-manifold without boundary.

For a boundary face $\sigma \in \partial M$ with vertices together assigned all $k$ colors, $\lk_X(v_\sigma)$ is a combinatorial $(d-k)$-disk, since $\lk_M(\sigma)$ is a $(d-m-1)$-disk and $S^{m-k} \ast D^{d-m-1} = D^{d-k}$. Therefore $X$ is a combinatorial $(d-k+1)$-manifold with boundary, and its vertices satisfy $v_\sigma \in \partial X$ if and only if $\sigma \in \partial M$. It follows that both $\partial X$ and~$X \cap \partial M$ are combinatorial $(d-k)$-manifolds without boundary, the former as the boundary of~$X$ and the latter as the $k$-color class of~$\partial M$. Moreover, it follows from the barycentric subdivision construction that $X \cap \partial M$ contains the same vertices of~$M'$ as~$\partial X$, and all faces spanned by these vertices. Therefore, $\partial X = X \cap \partial M$. 
\end{proof}

We also want to consider color classes that use fewer colors than the maximal possible number~$k$. We show that these form submanifolds with boundary. 

\begin{lemma}
\label{submanifold}
Let $X$ be an $m$-color class inside a combinatorial $d$-manifold $M$ colored using $k$ colors, where $1 \leq m \leq k \leq d$. Then $X$ is a union of  simplices that form a combinatorial $(d-m+1)$-dimensional submanifold of~$M$.  Furthermore, $\partial X = (X \cap \partial M) \cup X^+$, where $X^+$ is the subset of~$X$ colored with more than $m$ colors.
\end{lemma}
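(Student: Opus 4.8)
The plan is to follow the template of Lemmas~\ref{manifold_in_simplex} and~\ref{manifold_in_M}, the one genuinely new ingredient being a local model for an $m$-color class inside a single simplex when $m$ is allowed to be smaller than the number of colors present. By Lemma~\ref{subcomplex}, $X$ is already a subcomplex of the barycentric subdivision $M'$: its faces are the chains $\tau_0\subsetneq\dots\subsetneq\tau_r$ of faces of $M$ for which $\tau_0$ --- equivalently, every $\tau_i$ --- carries all colors of $\mathcal C=\{C'_1,\dots,C'_m\}$. So it remains to understand (i) the structure of $X\cap\sigma$ inside each simplex $\sigma$, distinguishing the part of its boundary lying on $\partial\sigma$ from the multicolored part, and (ii) that the link of every vertex of $X$ in $M'$ is a combinatorial sphere or disk.

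For (i), I would fix $\sigma$ of dimension $s$ that uses exactly the colors $C_1,\dots,C_l$, where $\mathcal C=\{C_1,\dots,C_m\}$ and $m\le l\le s+1$, write $I_j=\{i:c(v_i)=C_j\}$, and reuse the homeomorphism $h$ of Lemma~\ref{manifold_in_simplex} carrying the Voronoi coloring to the linear coloring, so that up to $h$ the class $X\cap\sigma$ is the convex polytope
\[
B=\Bigl\{\,x\in\sigma\ :\ \textstyle\sum_{i\in I_1}x_i=\dots=\sum_{i\in I_m}x_i\ \ge\ \sum_{i\in I_j}x_i\ \text{ for }\ j=m+1,\dots,l\,\Bigr\}.
\]
Being a compact convex polytope with interior points and affine span of dimension $s-m+1$, $B$ is a PL $(s-m+1)$-ball, and every point of $\partial B$ satisfies either $x_i=0$ for some $i$, hence lies on $\partial\sigma$, or $\sum_{i\in I_j}x_i=\sum_{i\in I_1}x_i$ for some $j>m$, hence is multicolored; thus $\partial B=(X\cap\partial\sigma)\cup(X^+\cap\sigma)$. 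To see that these two cells meet along a sphere, I would point to the exterior point $z\in\sigma$ with $z_i>0$ for all $i$ and $\sum_{i\in I_j}z_i>\sum_{i\in I_1}z_i$ for $j>m$ (put a tiny equal mass $t<1/l$ on the vertices of each $\mathcal C$-color and spread the rest evenly over the others): $z$ lies strictly on the $B$-side of every hyperplane $x_i=0$ and strictly beyond every hyperplane $\sum_{i\in I_j}x_i=\sum_{i\in I_1}x_i$, so the facets of $B$ visible from $z$ are exactly the multicolored ones, and one checks that the visible region of $\partial B$ is precisely $X^+\cap\sigma$. By the beneath--beyond theorem for polytopes, $X^+\cap\sigma$ and $X\cap\partial\sigma$ are then PL $(s-m)$-balls with union $\partial B\cong S^{s-m}$ and intersection a PL $(s-m-1)$-sphere. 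When $l=m$ there are no multicolored facets, so $X^+\cap\sigma=\varnothing$ and $X\cap\partial\sigma$ is the whole sphere $S^{s-m}$; the cases $s=m-1$ ($B$ a point) and $s=m$ ($B$ an arc) are immediate.

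For (ii), let $v_\sigma$ be a vertex of $X$, so $\sigma$ carries all of $\mathcal C$, with $\dim\sigma=s$. As in Lemma~\ref{manifold_in_M}, $\lk_{M'}(v_\sigma)$ is the join of the barycentric subdivisions of $\partial\sigma$ and of $\lk_M(\sigma)$, and passing to $X$ yields
\[
\lk_X(v_\sigma)=(X\cap\partial\sigma)\,\ast\,\lk_M(\sigma)',
\]
where the first factor is the $m$-color class of $\sigma$ restricted to $\partial\sigma$, and the second factor is the \emph{entire} subdivided link $\lk_M(\sigma)'$, because every face of $M$ strictly containing $\sigma$ already carries $\mathcal C$. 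Here $\lk_M(\sigma)'$ is a combinatorial $(d-s-1)$-sphere if $\sigma$ is interior and a $(d-s-1)$-disk if $\sigma\in\partial M$, while by (i) the first factor is a combinatorial $(s-m)$-sphere if $\sigma$ carries exactly $m$ colors and a $(s-m)$-disk if it carries more. Since a join of combinatorial spheres and disks is again such, of the summed dimension plus one, $\lk_X(v_\sigma)$ is always a combinatorial $(d-m)$-sphere or disk, so $X$ is a combinatorial $(d-m+1)$-manifold; and it is a disk exactly when $\sigma\in\partial M$ or $\sigma$ carries more than $m$ colors, i.e.\ exactly when $v_\sigma\in(X\cap\partial M)\cup X^+$. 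The same join computation applied to a higher simplex $v_{\tau_0}\dots v_{\tau_r}$ of $X$ shows it lies in $\partial X$ iff $\tau_0$ carries more than $m$ colors or $\tau_r\in\partial M$; since the former property is inherited by larger faces and the latter by smaller ones, this is exactly the condition that the simplex lies in $X^+$ or in $X\cap\partial M$. Hence $\partial X=(X\cap\partial M)\cup X^+$.

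The step I expect to be the main obstacle is (i): showing that $\partial B$ splits into precisely the $\partial\sigma$-part and the multicolored part, each a cell, with the two meeting along a sphere. Once the exterior viewpoint $z$ above is exhibited, this is exactly the beneath--beyond picture; the real work is the routine but necessary check that $z$ sees exactly the multicolored facets, plus the bookkeeping of the low-dimensional degenerate simplices. Everything after that is formal and runs parallel to the proof of Lemma~\ref{manifold_in_M}.
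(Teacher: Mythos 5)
Your proposal is correct and follows essentially the same route as the paper: reduce to the linear coloring so that $X\cap\sigma$ becomes the convex polytope cut out by the $m-1$ equalities and the $l-m$ inequalities, and then verify that each vertex link of $X$ is a join of a sphere or disk inside $\sigma$ with the (subdivided) link $\lk_M(\sigma)$. The only places you diverge are in presentation --- you make the beneath--beyond decomposition of $\partial B$ into the $\partial\sigma$-part and the multicolored part explicit, and you read off $\partial X=(X\cap\partial M)\cup X^+$ directly from which links are disks, whereas the paper obtains the same identity by merging the last $k-m$ colors into one so that $X^+$ becomes an $(m+1)$-color class covered by Lemma~\ref{manifold_in_M}, and handles $X\cap\partial M$ by induction on dimension.
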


\begin{proof}
The special case $m=k$ was treated in Lemma~\ref{manifold_in_M}. Without loss of generality, we can examine an $m$-color class of $M$ that corresponds to the set $\{C_1,C_2,\dots,C_m\}$ of the first $m<k$ colors. If not all these colors are represented in the vertices of some $d$-dimensional simplex~$\sigma$, then $X$ is disjoint from~$\sigma$. If exactly $C_1,\dots,C_m$ appear in~$\sigma$, then $X \cap \sigma$ is a $(d-m+1)$-disk and $X \cap \partial\sigma$ is a $(d-m)$-sphere by Lemma~\ref{manifold_in_simplex}. 

Consider a $d$-dimensional simplex $\sigma \in M$ where strictly more colors than $C_1,\dots,C_m$ appear. Using the construction in the proof of Lemma~\ref{manifold_in_simplex}, the set $X \cap \sigma$ is described by the solutions of $m-1$ independent linear equations
$$
\sum\nolimits_{i \in I_1} x_i \;=\; \sum\nolimits_{i \in I_2} x_i \;=\; \dots \;=\; \sum\nolimits_{i \in I_m} x_i 
$$
and $k-m$ linear inequalities
$$
\sum\nolimits_{i \in I_j} x_i \;\leq\; \sum\nolimits_{i \in I_1} x_i \;\;\;\;\;\;\;\; j \in \{m+1,\dots,k\}\;.
$$
This intersection of convex sets is a $(d-m+1)$-dimensional disk embedded in~$\sigma$. Since at least one of the last $k-m$ colors appears in~$\sigma$, the set of inequalities defining this set is not empty. Besides points of~$\partial \sigma$, the boundary of this disk contains interior points of~$\sigma$ where inequalities become equality, one of which is the translated barycenter $h(p) \in \sigma$. Such boundary points occur in $(m+1)$-color classes.

Therefore, $X$ is a union of $(d-m+1)$-dimensional disks, made of closed simplices in the subdivision of $M$. As in Lemma~\ref{manifold_in_M}, to show that it is a combinatorial submanifold we inspect the link of vertices $v_\sigma \in X$ for $\sigma \in M$. If $\dim \sigma = d$, then $\lk_X(v_\sigma)$ is a $(d-m)$-sphere or a $(d-m)$-disk, depending on whether $\sigma$ has more than $m$ colors as shown above. If $\dim \sigma = m-1$ then the isomorphism with $\lk_M(\sigma)$ described in the proof of Lemma~\ref{manifold_in_M} shows that $\lk_X(v_\sigma)$ is either a $(d-m)$-sphere or a $(d-m)$-disk, depending on whether~$\sigma \in \partial M$. In the intermediate cases $m-1 < \dim \sigma < d$, we similarly obtain $(d-m)$-dimensional spheres and disks from joins of spheres and disks as above. Hence $X$ is a combinatorial submanifold with boundary, whose triangulation is a subcomplex of the barycentric subdivision of~$M$.

The vertex $v_\sigma$ is a boundary vertex of~$X$ if either $\sigma \in \partial M$ has vertices of the first $m$ colors or $\sigma \in M$ has additional colors. The set $X^+$ is a proper submanifold of~$M$ with boundary $X^+ \cap \partial M$ by Lemma~\ref{manifold_in_M}, being an $(m+1)$-color class if one assigns a single new color to the last $k-m$ colors. Also $X \cap \partial M$ is a submanifold of $\partial M$ with boundary $X^+ \cap \partial M$, by induction on the dimension. Hence $(X \cap \partial M) \cup X^+$ is a $(d-m)$-submanifold with no boundary, and by the same reasoning as in the previous lemma it is equal to $\partial X$. 
\end{proof}

\begin{remark}
An $m$-color class may not be a proper submanifold, since it can have a boundary in the interior of $M$, but several of these can be glued together to eliminate  boundary components. This can be done by recoloring the $k-m$ remaining color classes using some assignment from the first $m$ colors, and applying Lemma~\ref{manifold_in_M} with $k=m$. For example, in a 4-coloring by $\{$R,G,B,W$\}$, the 2-color class RG has a boundary, but the union of RG, WG, RB, and WB is a proper submanifold of codimension~1, since it is the interface between R$\cup$W and G$\cup$B. 
\end{remark}

In dimensions above four, Lemmas \ref{manifold_in_M} and~\ref{submanifold} require the condition that the ambient manifold $M$ is combinatorial. This is illustrated  by taking $M$ to be the non-combinatorial manifold formed by taking the double suspension of the Poincare Homology Sphere~$P$. This double suspension is a triangulated manifold that is homeomorphic but not PL-homeomorphic to the 5-sphere. Suppose we color all vertices of $M$ red except for one of the two cone-points in the second suspension, which we color blue. Then the 2-color class is PL-homeomorphic to a single suspension of~$P$. This single suspension is not a manifold, since removing one of its two suspension points leaves an end that is not simply connected at infinity.

\section{Which Manifolds Arise}
\label{universality}

Having shown that coloring the vertices of a combinatorial manifold $M$ produces a submanifold $N$, we consider the question of which manifolds arise. Note that some topological manifolds of dimension four and above do not admit combinatorial triangulations. These manifolds cannot be obtained from the Voronoi construction carried out within a combinatorial manifold, which we showed yields a combinatorial manifold. From now on we restrict attention to combinatorial manifolds.

We first examine which manifolds $N$ arise as a $k$-color class in a manifold $M$. There are several ways to ask such a question. Note for example that every manifold~$N$ is the 1-color class obtained by taking $M=N$ and assigning a single color to all vertices. For every $k \geq 1$, $N$~is the $k$-color class in $M=N \times B$, where $B$ is a $(k-1)$-dimensional simplex having $k$ colors, and the coloring of~$M$ is induced from~$B$. In this construction $M$~has a boundary even if $N$ is closed. To generate a closed ambient manifold, one can restrict the previous construction to~$N \times \partial B$, and then each of the $(k-1)$-color classes gives a copy of~$N$, while the $k$-color class is empty. Another construction of a closed manifold with a coloring giving a submanifold~$N$ is $N \times \partial B$, where $B$ is a $k$-dimensional simplex with one of the $k$ colors repeating twice. Here the all-color class is a disjoint union of two copies of~$N$. This leads to the following question.

\begin{question*} [\textbf I]
Given a closed manifold~$N$, and an integer $k \geq 2$, is there a closed $k$-colored manifold~$M$ in which $N$~forms the entire $k$-color class?
\end{question*}

We give examples showing that the answer depends on~$N$.  Example~\ref{spheres} shows that spheres occur as a $k$-color class for all~$k$, while Example~\ref{cp2} shows that the projective plane never occurs as the entire $k$-color class for any~$k \geq 2$.

In the case of $k=2$ colors, note that the $d$-sphere $N=S^d$ is an equator in $M=S^{d+1}$, so it can be obtained as the interface between the colors. A~more general construction is based on the join of spheres, as follows.

\begin{example}
\label{spheres}
Let $N = S^d$, a triangulated $d$-sphere, and~$k \geq 2$. We take $M = N \ast S^{k-2}$, the join of~$N$ with the boundary of a $(k-1)$-simplex, and color the $k$ vertices of~$S^{k-2}$ with all $k$ colors, and also color all the vertices of~$N$ with one of these colors. The all-color class is one copy of~$N$ while~$M = S^{d+k-1}$ is closed.
\end{example}

Note that we cannot use joins in general, because $M$~is required to be a manifold. Nor can we in general embed~$N$ as a separating codimension-one submanifold, since some manifolds are not boundaries, such as the following example.
 
\begin{example}
\label{cp2}
For $k \geq 2$ colors, the real projective plane $N = RP^2$ is not the entire $k$-color class in any closed $(k+1)$-dimensional manifold~$M$. 
\end{example}

This example is a special case of Lemma~\ref{null-cobordant}, which describes a general obstruction that applies to a wide family of manifolds, including $RP^n$ and $CP^n$ for~$n \geq 2$.

\begin{lemma}
\label{null-cobordant}
If a closed manifold~$M$ is colored with $k \ge 2$  colors, and a manifold~$N$ occurs as the entire $k$-color class of~$M$, then $N$ is null-cobordant. 
\end{lemma}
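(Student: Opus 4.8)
The plan is to realize $N$ as the boundary of a compact manifold sitting inside $M$, which is exactly the statement that $N$ is null-cobordant. The cobounding manifold comes from simply forgetting one of the $k$ colors. Write the colors as $C_1,\dots,C_k$, and let $X$ be the $(k-1)$-color class of $M$ corresponding to the subset $\{C_1,\dots,C_{k-1}\}$, that is, the set of points of $|M|$ whose closest vertices realize every one of $C_1,\dots,C_{k-1}$ (and possibly $C_k$ as well). Note $N\subseteq X$, since a point bearing all $k$ colors in particular bears the first $k-1$, so $X$ is nonempty whenever $N$ is.

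The main step is to feed $X$ into Lemma~\ref{submanifold} with $m=k-1$. Since $M$ is a closed combinatorial $d$-manifold, that lemma gives that $X$ is a combinatorial $(d-k+2)$-dimensional submanifold of $M$, triangulated by a subcomplex of the barycentric subdivision $M'$ and hence compact, and it identifies $\partial X=(X\cap\partial M)\cup X^{+}$, where $X^{+}$ is the locus of $X$ carrying strictly more than $k-1$ colors. Because $M$ is closed, $X\cap\partial M=\varnothing$, so $\partial X=X^{+}$. Now comes the one genuine observation: a point of $X$ already carries $C_1,\dots,C_{k-1}$, and there are only $k$ colors in total, so it carries more than $k-1$ of them exactly when it carries all $k$ --- that is, exactly when it lies in the $k$-color class. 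Therefore $X^{+}=N$, and $\partial X=N$. Since $X$ is compact, $N$ bounds, so $N$ is null-cobordant.

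Given Lemma~\ref{submanifold}, there is essentially no obstacle here: the entire content is the choice of the cobounding submanifold (drop a color, take the $(k-1)$-color class of the rest) together with the elementary count identifying its boundary with the $k$-color class. A couple of degenerate situations deserve only a remark: if $N=\varnothing$ it is vacuously null-cobordant; and while the paper's standing convention is $2\le k\le d$, the borderline case $k=d+1$ (where $X$ is a compact $1$-manifold and the claim reduces to the parity of the number of endpoints) is covered by the same reasoning. An alternative proof by induction on $k$, merging two colors to reduce the color count, also seems viable but is messier, as it requires tracking the induced two-valued structure on the resulting lower-codimension color class; the route above avoids this entirely.
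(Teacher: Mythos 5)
Your proof is correct and follows exactly the paper's argument: drop one color, apply Lemma~\ref{submanifold} to the resulting $(k-1)$-color class, and observe that since $M$ is closed its boundary is precisely $X^{+}=N$. The paper states this in a single sentence; you have simply spelled out the identification $X^{+}=N$ and the compactness of the cobounding manifold, which is exactly the intended content.
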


\begin{proof}
  Suppose that $N$ is the entire $k$-color class in a closed $k$-colored manifold~$M$, where $k \geq 2$. Lemma~\ref{submanifold} implies that if one of the colors is omitted, then $N$ is the entire boundary of the resulting $(k-1)$-color class.
\end{proof}

\begin{remark}
\label{sw-numbers}
Many manifolds are not null-cobordant. This can be determined by a family of invariants known as the Stiefel--Whitney numbers~\cite{milnor1974characteristic}. For smooth manifolds, Thom showed that a smooth manifold is a boundary if and only if all its Stiefel--Whitney numbers vanish~\cite{thom1954quelques}. This result extends to PL-manifolds~\cite{buoncristiano1991characteristic}.
\end{remark}

We now consider the converse direction, and show that null-cobordism is sufficient. For example, the Klein bottle is the boundary of the solid Klein bottle, and by doubling the latter we obtain the Klein bottle as a separating surface between two color classes. Generalizing this construction, we establish the following result, which answers Question~(I).

\begin{theorem}
\label{null-cobordant-converse}
Let $k \geq 2$. A manifold~$N$ occurs as the $k$-color class in some closed $k$-colored manifold~$M$ if and only if $N$ is null-cobordant.
\end{theorem}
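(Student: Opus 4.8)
The ``only if'' direction is precisely Lemma~\ref{null-cobordant}, so the plan is to establish the converse: every null-cobordant manifold $N$ arises as the $k$-color class of some closed $k$-colored manifold. By hypothesis $N=\partial W$ for a compact combinatorial manifold $W$. Let $\Delta=\Delta^{k-1}$ be a $(k-1)$-simplex whose $k$ vertices receive the $k$ distinct colors $C_1,\dots,C_k$, and set
$$
M \;=\; \partial(W\times\Delta) \;=\; (N\times\Delta)\ \cup_{\,N\times\partial\Delta}\ (W\times\partial\Delta),
$$
a closed combinatorial manifold of dimension $\dim N+k-1$ once we fix a product (staircase) triangulation of $W\times\Delta$ whose vertex set is $\mathrm{vertices}(W)\times\mathrm{vertices}(\Delta)$. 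Color each vertex $(w,v)$ of $M$ with the color of~$v$, so that the coloring is read off from the $\Delta$-coordinate alone. This reduces to the ``double with a collar'' of the Klein bottle example when $k=2$.

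Next I would identify the $k$-color class $X$ of $M$ using the combinatorial description of Lemma~\ref{subcomplex}: $X$ is the subcomplex of the barycentric subdivision of $M$ spanned by the barycenters $\hat F$ of those faces $F$ whose vertices carry all $k$ colors. A face $F$ carries all $k$ colors exactly when the $\Delta$-coordinates of its vertices include all $k$ vertices of $\Delta$; this forces the $\Delta$-coordinate of $\hat F$ into the interior of $\Delta$, so such $\hat F$ lies in $N\times\mathrm{int}\,\Delta$, the complement of the gluing locus inside the $N\times\Delta$ part. Hence $X\subseteq N\times\mathrm{int}\,\Delta$, and since $M$ is closed, $X$ is a closed submanifold; by Lemma~\ref{manifold_in_M} it is a combinatorial manifold of dimension $\dim N$. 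It remains to see that $X$ is PL-homeomorphic to~$N$.

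For the last step I would pass one dimension up, to the $k$-colored combinatorial manifold with boundary $W\times\Delta$. By Lemma~\ref{submanifold} (case $m=k$, as in Lemma~\ref{manifold_in_M}) its all-color class $Y$ is a proper combinatorial submanifold of dimension $\dim(W\times\Delta)-k+1=\dim W$, with $\partial Y=Y\cap\partial(W\times\Delta)=Y\cap M$; the same computation of all-color faces as above gives $Y\cap M=X$, so $X=\partial Y$. The crux is then that $Y$ is a section of the projection $\pi_W\colon W\times\Delta\to W$, i.e.\ $\pi_W|_Y\colon Y\to W$ is a PL homeomorphism; granting this, $X=\partial Y\cong\partial W=N$, and the theorem follows. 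I expect this identification to be the only real work. One checks it prism by prism: every maximal simplex $\rho$ of the staircase triangulation of a prism $\tau\times\Delta$ (with $\tau$ a simplex of $W$) contains a vertex at each $\Delta$-level and so receives all $k$ colors, so Lemma~\ref{manifold_in_simplex} (or the linear model in its proof) describes $Y\cap\rho$ as a disk; one then verifies that these disks glue, across the simplices $\rho$ of a fixed prism and then across the simplices of $W$, into a submanifold on which $\pi_W$ is a bijection onto $W$. Finally, $\dim M=\dim N+k-1\ge k$ whenever $\dim N\ge1$, so the construction stays in the stated range; the case $\dim N=0$ is handled directly by a small explicit colored complex.
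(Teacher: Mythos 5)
Your proposal is correct, but it takes a genuinely different route from the paper's. The paper proves sufficiency by induction on~$k$: the base case $k=2$ realizes $N=\partial X$ as the interface in the double~$DX$, and each inductive step requires two auxiliary results --- Lemma~\ref{doubleisboundary}, showing that every double itself bounds, to manufacture a fresh null-cobordism~$Z$ of the previous ambient manifold, and Lemma~\ref{double}, which promotes the $(k-1)$-color class of~$\partial Z$ to the $k$-color class of~$DZ$ --- each involving collar and product triangulations and explicit link verifications. You instead use the one-shot ambient manifold $M=\partial(W\times\Delta^{k-1})$ with the coloring pulled back from the $k$ colored vertices of~$\Delta^{k-1}$; since every simplex of the $W\times\partial\Delta$ part misses a color, the all-color class is forced into $N\times\mathrm{int}\,\Delta$, where Lemma~\ref{product} (stated in the paper for exactly this product coloring) identifies it with $N\times\{p\}$. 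For $k=2$ your $M$ coincides with the paper's double $DW$, but for $k\geq 3$ the constructions diverge. What your approach buys is brevity and transparency: no induction, no doubling lemmas, and the null-cobordism is visible inside the construction as the all-color class $Y\cong W$ of $W\times\Delta$ with $\partial Y=X$. The cost is that the weight falls on Lemma~\ref{product}, whose ``$r$-color class is isotopic to $\Sigma\times p$'' assertion the paper states without proof; your prism-by-prism verification is the right way to discharge it and is no harder than the analogous check inside the paper's proof of Lemma~\ref{double}. One also needs the standard fact that the staircase triangulation makes $W\times\Delta$ a combinatorial manifold with the expected boundary, which the paper assumes implicitly elsewhere. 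The degenerate case $\dim N=0$, where $k$ exceeds $\dim M$, is glossed over in both treatments, so your parenthetical remark about it does not put you at a disadvantage.
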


Before giving a construction that proves this, we discuss its main building block. The \emph{double} of a manifold~$X$ with a nonempty boundary is defined as 
$$  DX \;=\; (X \times \{a,b\}) \;/\; \{(x,a) \sim (x,b) \text{ for } x \in \partial X\} \;.$$
The double of $X$ is a closed manifold, and has a natural involution $\tau : DX \to DX$ interchanging $(x,a)$ and $(x,b)$ and fixing $\partial X = \partial X \times \{a\} \subset DX$.

A first attempt to realize the double of a combinatorial manifold~$X$ as a simplicial complex is to duplicate all nonboundary vertices and faces. This runs into technical problems, as an~internal face whose vertices lie on the boundary gives rise to two identical simplices. For example, if $X$ consists of one edge with two boundary vertices, then doubling the edge does not give a simplicial complex. Instead of addressing this issue by subdividing such faces beforehand, we  take another approach that  lets us extend colorings of $X$ in a natural way.

The manifold $X$ contains a collar neighborhood $\partial X \times [0,\varepsilon)$ of the boundary. Therefore, $X$~is PL homeomorphic to $X' = X \cup (\partial X \times [0,1])$ where $\partial X \times \{0\}$ is glued to $\partial X \subset X$, and $\partial X' = \partial X \times \{1\}$. We triangulate the product without additional vertices, see Lemma~\ref{product} for an explicit and general definition of such a triangulation. Every simplex of~$X'$ with all vertices in the boundary is fully contained in the boundary. Hence duplicating the interior faces of~$X'$ gives a simplicial triangulation of the topological manifold~$DX$. Below we assume this PL triangulation for $DX$. Note that in this triangulation, the separating $\partial X'$ in the middle of~$DX$ has parallel simplicial copies on both sides.

The next lemma shows that a $k$-color class in the boundary of a manifold~$X$ can be made into a $(k+1)$-color class in an appropriate coloring of its double~$DX$.

\begin{lemma}
\label{double}
Let $X$ be a $k$-colored combinatorial manifold with boundary~$\partial X$, and let $Y$ be the $k$-color class of~$\partial X$. The double~$DX$ admits a combinatorial triangulation and a $(k+1)$-coloring whose $(k+1)$-color class is isotopic in $DX$ to the embedding of $Y$ in~$\partial X$. 
\end{lemma}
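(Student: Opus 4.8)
The plan is to paint the double $DX$ so that one copy of $X$ retains the given coloring $c$ while the other copy is swallowed by a single new color, and then to read off the $(k+1)$-color class as the far boundary of a collar of $Y$ sitting inside $DX$.

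Concretely, write $DX = X'_a \cup_{\partial X'} X'_b$ as in the discussion preceding the lemma, with $X'_i = X_i \cup \mathrm{collar}_i$, each $\mathrm{collar}_i \cong \partial X \times [0,1]$ triangulated without new vertices (Lemma~\ref{product}), glued to $X_i$ along $\partial X \times \{0\} = \partial X_i$ and meeting the middle surface $\partial X'$ along $\partial X \times \{1\}$. Since $X$ and the product triangulation are combinatorial, so is this triangulation of $DX$. I color every vertex of $X_a$ by $c$ and every vertex of $\partial X'$ by the transported restriction $c|_{\partial X}$; this agrees with the colors already carried by $\partial X_a$, so $X'_a$ is just a PL copy of the $k$-colored manifold $X$. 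I color every vertex of $X_b$, including the level-$0$ slice $\partial X_b$ of $\mathrm{collar}_b$, by the new color $C_{k+1}$. (If $c|_{\partial X}$ misses one of $C_1,\dots,C_k$ then $Y = \varnothing$, and since that color then occurs only in the interior of $X_a$, where $C_{k+1}$ is absent, the $(k+1)$-color class is empty as well; so assume all $k$ colors occur on $\partial X$.)

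Let $Z$ be the $(k+1)$-color class of $DX$ and $W$ its $k$-color class for $\{C_1,\dots,C_k\}$. Because $DX$ is closed, Lemma~\ref{submanifold} gives that $W$ is a $(d-k+1)$-submanifold with $\partial W = W^+ = Z$. I then identify $W$. The colors $C_1,\dots,C_k$ occur only on the vertices of $X'_a$ and on $\partial X'$ (which is the level-$1$ slice of $\mathrm{collar}_b$), so $W \subset X'_a \cup \mathrm{collar}_b$ and $Z \subset \mathrm{collar}_b$; moreover $W = \widehat W_a \cup W_b$, where $\widehat W_a = W \cap X'_a$ is the $k$-color class of $X'_a$ and $W_b = W \cap \mathrm{collar}_b$ is the $k$-color class of $\mathrm{collar}_b$, and the two overlap precisely in the $k$-color class $Y$ of $\partial X'$. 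Applying Lemma~\ref{manifold_in_M} to $X'_a \cong X$, which uses only $k$ colors, shows $\widehat W_a$ is a proper submanifold with $\partial \widehat W_a = \widehat W_a \cap \partial X' = Y$. Applying Lemma~\ref{submanifold} to the $(k+1)$-colored $\mathrm{collar}_b = \partial X \times [0,1]$: the $k$-color class of the monochromatic slice $\partial X \times \{0\}$ is empty, that of $\partial X \times \{1\} = \partial X'$ is $Y$, and $W_b^+ = Z$, so $\partial W_b = Y \sqcup Z$ with $Z$ in the interior of $\mathrm{collar}_b$.

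The key step is to prove that $W_b$ is a \emph{collar} of $Y$, that is, $W_b \cong Y \times [0,1]$ with $Y \times \{1\}$ identified with $Y \subset \partial X'$ and $Y \times \{0\} = Z$. Granting this, $W = \widehat W_a \cup_Y W_b$ is $\widehat W_a$ with an external collar attached along its boundary, hence $W \cong \widehat W_a$ with $\partial W = Z$ matching $\partial \widehat W_a = Y$; so $Z$ is PL-homeomorphic to $Y$, and pushing $Z$ from $Y \times \{0\}$ to $Y \times \{1\}$ along $W_b$ is an isotopy within $\mathrm{collar}_b \subset DX$ carrying $Z$ onto $Y \subset \partial X'$, which is the asserted embedding of $Y$ in (the natural copy of) $\partial X$. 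The main obstacle is exactly this collar claim: one must understand the $k$-color class of $\partial X \times [0,1]$ when one end is colored by $c|_{\partial X}$ and the other is monochromatic. I would establish it by a vertex-link computation in the spirit of Lemma~\ref{manifold_in_M}, carried out prism by prism over the braid triangulation of $\partial X \times [0,1]$ and using Lemma~\ref{manifold_in_simplex} inside each prism: the locus where the closest vertices realize all of $C_1,\dots,C_k$ is controlled at the bottom of each prism by the faces of $\partial X$ supporting $Y$, it shrinks monotonically as one descends the collar, and it closes off exactly where $C_{k+1}$ first becomes a nearest color, i.e.\ along $Z$; checking that the links of the resulting vertices are balls along $Z$ and spheres elsewhere yields the product structure. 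Alternatively this collar statement can be packaged as a general fact about color classes of products built on the triangulation machinery behind Lemma~\ref{product}.
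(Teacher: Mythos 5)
Your global strategy is the paper's: form the double, extend the coloring by a single new color on one side so that the only transition happens across one product collar $\partial X\times[0,1]$, and then analyze that collar prism by prism. The differences are cosmetic — you place the new color starting at $\tau(\partial X)$ rather than at the middle surface, and you isotope $Z$ back to $Y$ along the $k$-color class $W_b$ rather than isotoping all of $\partial X\times\{0\}$ onto the frontier of the new color's region as the paper does. Your bookkeeping with Lemmas~\ref{manifold_in_M} and~\ref{submanifold}, including the degenerate case where a color is missing from $\partial X$, is correct.

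The gap is exactly where you flag it: the claim $W_b\cong Y\times[0,1]$ is asserted but not proved, and the justification you propose does not deliver it. Verifying that the vertex links of $W_b$ are balls along $Y\sqcup Z$ and spheres elsewhere shows only that $W_b$ is a compact manifold with boundary $Y\sqcup Z$; it does not give a product structure, nor even that $Z\cong Y$. Moreover, ``shrinks monotonically \dots and closes off'' is the wrong picture: if the level sets of the collar really shrank and capped off, $W_b$ would be a cone on $Y$ and the lemma would fail. What actually happens, and what the paper establishes explicitly, is that $P_\sigma=\sigma\times[0,1]$ decomposes into $d=\dim\sigma+1$ top simplices with vertex colors $(C_0,C_1,\dots,C_d),(C_0,C_0,C_2,\dots,C_d),\dots,(C_0,\dots,C_0,C_d)$, consecutive ones sharing a codimension-one face; inside each, the linear-coloring description from Lemma~\ref{manifold_in_simplex} exhibits every color class as a disk cut out by linear equations and inequalities, and one checks that the pieces in consecutive simplices glue into a single disk, so that the stratification of $\sigma\times\{t\}$ by color classes is the \emph{same} for every $t$ strictly between the two ends and acquires the new color only at the critical level. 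That level-by-level identification is what yields the product structure on $W_b$ (equivalently, the paper's isotopy carrying each $m$-color class of $\partial X$ to the corresponding $(m+1)$-class), and it is the substantive content of the lemma. So: right skeleton, but the load-bearing step still has to be carried out.
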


\begin{proof}
The double $DX$ is triangulated as in the foregoing discussion: thickening the boundary of~$X$, to obtain $X' = X \cup (\partial X \times [0,1])$, duplicating $X'$, and identifying the two copies of~$\partial X'$. The resulting closed combinatorial manifold is 
$$ DX \;=\; X \,\cup\, (\partial X \times [0,1]) \,\cup\, \tau(\partial X \times [0,1]) \,\cup\, \tau(X) $$
with consecutive terms glued along three parallel $\partial X$ copies: $x \sim (x,0)$, $(x,1) \sim \tau((x,1))$, and $\tau((x,0)) \sim \tau(x)$ for $x \in \partial X$. The given $k$-color class embeds as $Y \subset \partial X \times\{0\} \subset DX$.

The $(k+1)$-coloring of~$DX$ is defined as follows. The vertices of the first copy of~$X$ keep their given colors, say $C_1,\dots,C_k$. The vertices of $\partial X \times \{1\}$ and $\tau(X)$ are assigned a new color $C_0$. Let $W$ be the $(k+1)$-color class in $DX$. We show that $Y$ is isotopic to~$W$.

Let $U$ denote the boundary of the color class of $C_0$. By Lemma~\ref{submanifold}, $U$~is a codimension one submanifold of~$DX$, made of the 2-color classes of $\{C_0,C_i\}$ for all $1 \leq i \leq k$. Note that $W \subset U \subset \partial X \times (0,1)$, since no faces in the rest of~$DX$ have vertices of $C_0$ and another color. The proof goes by an isotopy from $\partial X \times\{0\}$ to~$U$, which moves $m$-color classes of~$\partial X$ to $(m+1)$-color classes of~$DX$ corresponding to the same colors and~$C_0$. This isotopy takes the $k$-color class~$Y$ to~$W$.

By Lemma~\ref{subcomplex}, the color classes in~$DX$ are subcomplexes of its barycentric subdivision, with new vertices at the barycenters of the faces. Here it is convenient to move the points introduced during barycentric subdivision so that they lie on $\partial X \times \{\tfrac12\}$ for any face that intersects this hyperplane. See the proof of Lemma~\ref{manifold_in_simplex} for a similar equivalence of subdivisions. Observe that this choice implies $U = \partial X \times \{\tfrac12\}$.

We consider each prism $P_\sigma = \sigma \times [0,1]$ where $\sigma \in \partial X$, and isotope $\sigma \times \{0\}$ to $U \cap P_\sigma$. Gluing these pieces inductively on $\dim \sigma$ gives the isotopy from $\partial X \times \{0\}$ to $U$. Without loss of generality, suppose that the vertices of~$\sigma$ are assigned distinct colors. Otherwise, identify the appropriate sets of colors to obtain the correspondence between color classes.

\begin{figure}
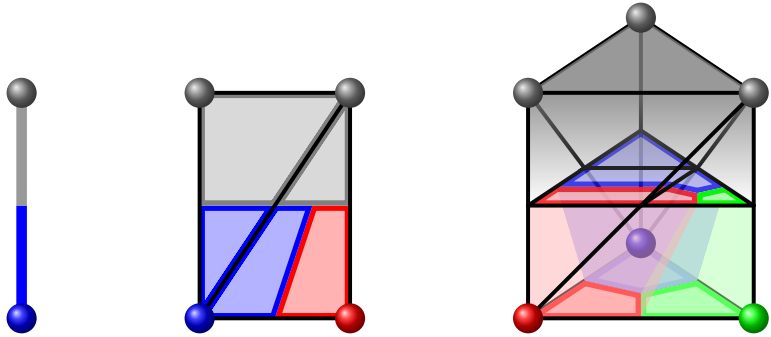

\centering
\tikz{
% blue
\path[draw=blue, line width=4] (0,0) -- (0,1.5);
% gray
\path[draw=gray!80, line width=4] (0,1.5) -- (0,3);
% gray
\shade [ball color=blue] (0,0) circle (0.2);
\shade [ball color=gray] (0,3) circle (0.2);}
%%%%%%%%%%%%%%%%%%%%%
\;\;\;\;\;\;\;\;\;\;\;\;\;\;\;\;
\tikz{
% red
\path[preaction={clip, postaction={draw=red, line width=4,fill=red!30,opacity=1}}] (1,0) -- (2,0) -- (2,1.5) -- (1.5,1.5) -- cycle;
% blue
\path[preaction={clip, postaction={draw=blue, line width=4,fill=blue!30,opacity=1}}] (0,0) -- (1,0) -- (1.5,1.5) -- (1,1.5) -- (0,0) -- (1,1.5) -- (0,1.5) -- cycle;
% gray
\path[preaction={clip, postaction={draw=gray, line width=4,fill=gray!30,opacity=1}}] (0,1.5) -- (2,1.5) -- (2,3) -- (1,1.5) -- (2,3) -- (0,3) -- cycle;
% black
\path[draw=black, line width=1.5] (0,0) -- (0,3) -- (2,3) -- (0,0) -- (2,0) -- (2,3);
\shade [ball color=blue] (0,0) circle (0.2);
\shade [ball color=red] (2,0) circle (0.2);
\shade [ball color=gray] (0,3) circle (0.2);
\shade [ball color=gray] (2,3) circle (0.2);
}
%%%%%%%%%%%%%%%%%%%%%
\;\;\;\;\;\;\;\;\;\;\;\;\;\;\;\;
\tikz{
% black
\path[draw=black, line width=1.5] (1.5,4) -- (0,3) -- (1.5,1) -- (1.5,4) -- (3,3) -- (1.5,1) -- (3,0) (1.5,1) -- (0,0);
% blue
\path[preaction={clip, postaction={draw=blue, line width=4,fill=blue!30,opacity=1}}] (0.75,0.5) -- (1.5,1) -- (2.25,0.5) -- (1.5,0.333) -- cycle;
\path[preaction={clip, postaction={draw=blue, line width=4,fill=blue!30,opacity=0.8}}] (0.375,1.75) -- (1.5,2.5) -- (2.625,1.75) -- (2.25,1.666) -- (1.875,1.75) -- cycle;
\path[preaction={clip, postaction={fill=blue!30,opacity=0.8}}] (0.375,1.75) -- (1.875,1.75) -- (2.25,1.666) -- (2.625,1.75) -- (2.25,0.5) -- (1.5,0.333) -- (0.75,0.5) -- cycle;
\shade [ball color=blue] (1.5,1) circle (0.2);
% green
\path[preaction={clip, postaction={draw=green, line width=4,fill=green!30,opacity=1}}] (1.5,0) -- (3,0) -- (2.25,0.5) -- (1.5,0.333) -- cycle;
\path[preaction={clip, postaction={draw=green, line width=4,fill=green!30,opacity=1}}] (2.25,1.5) -- (3,1.5) -- (2.625,1.75) -- (2.25,1.666) -- cycle;
\path[preaction={clip, postaction={fill=green!30,opacity=0.5}}] (2.25,1.5) -- (3,1.5) -- (3,0) -- (1.5,0) -- (1.5,0.333) -- (2.25,1.666) -- cycle;
% red
\path[preaction={clip, postaction={draw=red, line width=4,fill=red!30,opacity=1}}] (1.5,0) -- (0,0) -- (0.75,0.5) -- (1.5,0.333) -- cycle;
\path[preaction={clip, postaction={draw=red, line width=4,fill=red!30,opacity=0.8}}] (2.25,1.5) -- (0,1.5) -- (0.375,1.75) -- (1.875,1.75) -- (2.25,1.666) -- cycle;
\path[preaction={clip, postaction={fill=red!30,opacity=0.5}}] (2.25,1.5) -- (0,1.5) -- (0,0) -- (1.5,0) -- cycle;
% black 2
\path[preaction={clip},postaction={draw=black,line width=1.5,line cap=round}] (0.75,2) -- (0,1.5) (0,1.5) -- (1.5,1.5) -- (2.25,2) (2.25,2) -- (0.75,2) -- (1.5,2.5) -- (3,1.5) (3,1.5) -- (1.5,1.5);
% gray
\path[preaction={clip, postaction={draw=gray, line width=4,fill=gray,opacity=0.8}}] (0,3) -- (3,3) -- (1.5,4) -- cycle;
\tikzfading[name=myfading,
  top color=transparent!20, bottom color=transparent!100]
\fill[gray,path fading=myfading] (0,1.5) -- (0,3) -- (3,3) -- (3,1.5) -- cycle;
% black 3
\path[draw=black, line width=1.5] (0,0) -- (3,0) -- (3,3) -- (0,0) -- (0,3) -- (3,3);
\shade [ball color=gray] (1.5,4) circle (0.2);
\shade [ball color=red] (0,0) circle (0.2);
\shade [ball color=green] (3,0) circle (0.2);
\shade [ball color=gray] (0,3) circle (0.2);
\shade [ball color=gray] (3,3) circle (0.2);
}
\caption{The $(d+1)$-colored triangulated prism $\sigma \times [0,1]$ in dimensions $d=1,2,3$ as in the proof of Lemma~\ref{double}. Each colored region is a union of smaller simplices obtained by barycentric subdivision, but this is not shown. An isotopy takes each $m$-color class at height~0 to an $(m+1)$-color class at height~$\tfrac12$.}
\label{prisms}
\end{figure} 

The prism $P_\sigma$ is triangulated without additional vertices, see Lemma~\ref{product} below, using the vertices of $\sigma \times \{0\}$ and $\sigma \times \{1\}$, with $d = \dim\sigma+1$ simplices of the top dimension~$d$. If $\sigma$'s vertices are colored by $C_1,\dots,C_d$ then the vertex colorings of these $d$ simplices are as follows:
$$ (C_0,C_1,\dots,C_d),\; (C_0,C_0,C_2,\dots,C_d),\; (C_0,C_0,C_0,C_3,\dots,C_d),\; \dots,\; (C_0,\dots,C_0,C_d) $$
Note that adjacent simplices in this sequence share a $(d-1)$-dimensional face. Illustrations are provided in Figure~\ref{prisms} for $d=1,2,3$. The intersection $U \cap P_\sigma = \sigma \times \{\tfrac12\}$ is a union of $(d-1)$-dimensional topological disks in  these $d$ simplices. Similarly, each $(m+1)$-color class that corresponds to $C_0$ and $m$ additional colors is a union of $(d-m)$-dimensional disks in consecutive simplices, that fit together to form one disk. In particular, the $(d+1)$-color class is a point in the first simplex. The resulting stratification of color classes in $\sigma \times \{\tfrac12\}$ is equivalent to that of the $d$-colored $\sigma = \sigma \times \{0\} \subset \partial X \times\{0\}$, and similarly to that of every $\sigma \times \{t\}$ where $t \in (0,\tfrac12)$. Hence, we can extend the given isotopy in $\partial \sigma \times [0,\tfrac12]$ to an isotopy in $\sigma \times [0,\tfrac12]$, starting with the one-point $d$-color class, then for each $(d-1)$-color class, and so~on.
\end{proof}

The following lemma states that the double of a manifold is itself a boundary. This  lets us iterate the previous lemma in the proof of Theorem~\ref{null-cobordant-converse}.

\begin{lemma}
\label{doubleisboundary}
For every combinatorial manifold~$X$, there exists a combinatorial manifold~$Z$ such that $\partial Z = DX$.
\end{lemma}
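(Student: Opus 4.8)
The plan is to exhibit the bounding manifold explicitly as $Z = X \times [0,1]$ and to show that its boundary is PL-homeomorphic (indeed, after one subdivision, equal to) the triangulated double $DX$ fixed in the discussion above.

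First I would triangulate $Z = X \times [0,1]$ as a combinatorial manifold. Using the product triangulation of Lemma~\ref{product} --- which triangulates $\sigma \times [0,1]$ for each simplex $\sigma$ of $X$ without introducing new vertices, consistently on shared faces --- one obtains a simplicial complex realizing $|X| \times [0,1]$. That this complex is a combinatorial $(\dim X + 1)$-manifold with boundary follows from the fact that a product of combinatorial manifolds is again combinatorial: the link of a vertex in the product is built from joins of links in the factors, and a join of combinatorial spheres and disks is again a combinatorial sphere or disk. The boundary is then the standard one, $\partial Z = (\partial X \times [0,1]) \cup (X \times \{0\}) \cup (X \times \{1\})$, the three pieces being glued along $\partial X \times \{0\}$ and $\partial X \times \{1\}$.

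The heart of the argument is to identify $\partial Z$ with $DX$. Subdivide the interval at its midpoint, so that $\partial X \times [0,1] = (\partial X \times [0,\tfrac12]) \cup (\partial X \times [\tfrac12,1])$. The subcomplex $(X \times \{0\}) \cup (\partial X \times [0,\tfrac12])$ is $X$ with an external collar attached along its boundary, hence --- exactly as in the passage establishing $X \cong X' = X \cup (\partial X \times [0,1])$ preceding Lemma~\ref{double} --- it is PL-homeomorphic to $X$, with $\partial X \times \{\tfrac12\}$ playing the role of $\partial X$; symmetrically, $(X \times \{1\}) \cup (\partial X \times [\tfrac12,1])$ is a copy of $X$ with the same boundary $\partial X \times \{\tfrac12\}$. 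Gluing these two copies of $X$ along $\partial X \times \{\tfrac12\}$ gives precisely $X \cup_{\partial X} X = DX$. Moreover the two collars $\partial X \times [0,\tfrac12]$ and $\partial X \times [\tfrac12,1]$ match the two parallel collar copies in the triangulation of $DX$ fixed earlier, so $\partial Z$ carries that triangulation up to the one subdivision.

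The step I expect to be the main obstacle is the PL bookkeeping rather than anything topological: verifying that the product triangulation of Lemma~\ref{product} genuinely yields a combinatorial manifold (i.e.\ that the relevant vertex links are combinatorial spheres or disks), and that absorbing the external collar is a PL move compatible with the triangulations. Both are standard --- the collar absorption is the PL collar neighborhood theorem, already used in this section --- so the proof ultimately reduces to assembling these facts around the identity $\partial(X \times [0,1]) \cong DX$.
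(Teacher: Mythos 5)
Your argument is correct, but it implements the classical fact ``a double bounds a cylinder'' by a genuinely different route from the paper. You take $Z = X \times [0,1]$ directly and then identify $\partial Z$ with $DX$; the paper instead takes $DX \times [0,1]$ and folds the bottom copy onto itself via the involution $\tau$, so that $\partial Z$ is on the nose the copy $DX \times \{1\}$ carrying the triangulation of $DX$ fixed before Lemma~\ref{double}. The paper's price is the quotient: one must verify that the identification yields a simplicial complex and that vertex links along the fold locus are still combinatorial spheres or balls, which is exactly why it imposes the ordering convention on the vertices of $\partial X$ when invoking Lemma~\ref{product}. Your price is the boundary identification: $\partial(X\times[0,1])$ has a \emph{single} collar $\partial X\times[0,1]$, whereas the fixed triangulation of $DX$ has two parallel collars, and subdividing that collar at $t=\tfrac12$ forces you to also subdivide the interior simplices of $Z$ meeting it, so as written you only obtain $\partial Z = DX$ after a subdivision. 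This is easily repaired: take $Z = X\times[0,2]$ built from two product layers $X\times[0,1]$ and $X\times[1,2]$, whose boundary is then literally $X \cup (\partial X\times[0,1]) \cup \tau(\partial X\times[0,1]) \cup \tau(X)$ with the paper's triangulation. With that adjustment your route is arguably cleaner, since it avoids quotients and the attendant link-checking entirely. One small caveat: your justification that the product is combinatorial --- that links in the product are joins of links in the factors --- is not literally how links decompose in the staircase triangulation of Lemma~\ref{product} (in the prism $ABC\times[0,1]$ the link of $A_0$ is a disk made of three triangles, not a single join simplex); the clean justification is the standard PL fact that a linear triangulation of a PL manifold, here the product of PL manifolds, is combinatorial --- a fact the paper also relies on implicitly for $DX\times[0,1]$.
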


\begin{proof}
By the above discussion, we assume that $X$ is triangulated such that its double $DX = (X \cup \tau(X)) / (\partial X \equiv \tau (\partial X))$ is a combinatorial manifold. We also assume that the products $X \times [0,1]$ and $DX \times [0,1]$ are triangulated as defined generally in Lemma~\ref{product} below, letting the vertices of~$\partial X$ be smaller than the others in the arbitrary ordering of~$X$ or~$DX$. It follows that no edge or simplex connects $(x_0,0)$ to $(x_1,1)$ for $x_0 \not\in \partial X$ and~$x_1 \in \partial X$.

The manifold $Z$ with $\partial Z = DX$ is constructed from $DX \times [0,1]$ by eliminating the boundary component $DX \times \{0\}$. This is done by identifying its two halves $X \times \{0\}$ and $\tau(X) \times \{0\}$ with each other. We thus define 
\begin{align*}
Z \;&=\; ( DX \times [0,1]) \;/\; \{(x,0) \sim (\tau(x), 0) \text{ for } x \in X\} \\
\;&=\; ((X \times [0,1]) \,\cup\, (\tau(X) \times [0,1])) \;/\; \{(x,t) \sim (\tau(x), t) \text{ if } t=0 \text{ or  } x \in \partial X\} 
\end{align*}
See Figure~\ref{doubling} for an illustration of this construction. It remains to show that $Z$ is a combinatorial manifold bounded by~$DX$.

\begin{figure}
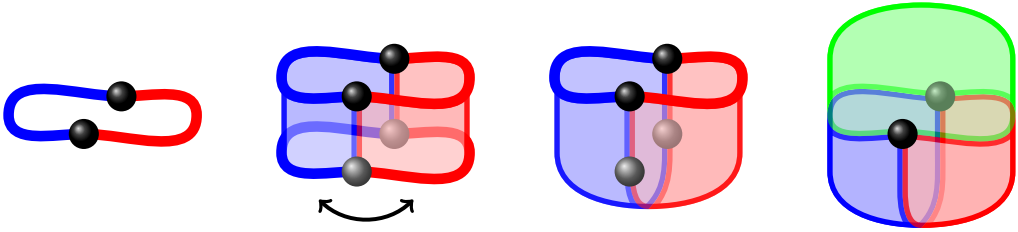

\centering
\tikz{
\clip (-1.5,-2) rectangle (2,2);
% blue
\path[draw=blue, line width=4] (0,0) to[out=180,in=270] (-1,0.25) to[out=90,in=180] (0.5,0.5);
% red
\path[draw=red, line width=4] (0,0) to[out=0,in=270] (1.5,0.25) to[out=90,in=0] (0.5,0.5);
% gray
\shade [ball color=black] (0,0) circle (0.2);
\shade [ball color=black] (0.5,0.5) circle (0.2);}
%%%%%%%%%%%%%%%%%%%%%
\tikz{
\clip (-1.5,-2.5) rectangle (2,1.5);
% blue
\path[draw=blue, line width=4] (0,-1) to[out=180,in=270] (-1,-0.75) to[out=90,in=180] (0.5,-0.5);
\path[preaction={clip, postaction={draw=blue, line width=4,fill=blue!30,opacity=0.8}}] (0.5,0.5) to[out=180,in=90] (-1,0.25) -- (-1,-0.75) to[out=90,in=180] (0.5,-0.5) -- cycle;
% red
\path[draw=red, line width=4] (0,-1) to[out=0,in=270] (1.5,-0.75) to[out=90,in=0] (0.5,-0.5);
\path[preaction={clip, postaction={draw=red, line width=4,fill=red!30,opacity=0.8}}] (0.5,0.5) to[out=0,in=90] (1.5,0.25) -- (1.5,-0.75) to[out=90,in=0] (0.5,-0.5) -- cycle;
% gray
\shade [ball color=gray] (0.5,-0.5) circle (0.2);
% blue
\path[preaction={clip, postaction={draw=blue, line width=4,fill=blue!30,opacity=0.6}}] (0,0) to[out=180,in=270] (-1,0.25) -- (-1,-0.75) to[out=270,in=180] (0,-1) -- cycle;
\path[draw=blue, line width=4] (0,0) to[out=180,in=270] (-1,0.25) to[out=90,in=180] (0.5,0.5);
% red
\path[preaction={clip, postaction={draw=red, line width=4,fill=red!30,opacity=0.6}}] (0,0) to[out=0,in=270] (1.5,0.25) -- (1.5,-0.75) to[out=270,in=0] (0,-1) -- cycle;
\path[draw=red, line width=4] (0,0) to[out=0,in=270] (1.5,0.25) to[out=90,in=0] (0.5,0.5);
% gray
\shade [ball color=black] (0,0) circle (0.2);
\shade [ball color=gray] (0,-1) circle (0.2);
\shade [ball color=black] (0.5,0.5) circle (0.2);
\draw[black,line width=1.5,<->]
(-0.5,-1.375) to[out=-45,in=-135] (0.75,-1.375);
}
%%%%%%%%%%%%%%%%%%%%%
\tikz{
\clip (-1.5,-2.5) rectangle (2,1.5);
% blue
\path[preaction={clip, postaction={draw=blue, line width=4,fill=blue!30,opacity=0.8}}] (0.5,0.5) to[out=180,in=90] (-1,0.25) -- (-1,-0.75) to[out=270,in=180] (0.25,-1.5) to[out=45,in=270] (0.5,-0.5) -- cycle;
% red
\path[preaction={clip, postaction={draw=red, line width=4,fill=red!30,opacity=0.8}}] (0.5,0.5) to[out=0,in=90] (1.5,0.25) -- (1.5,-0.75) to[out=270,in=0] (0.25,-1.5)  to[out=45,in=270] (0.5,-0.5) -- cycle;
% gray
\shade [ball color=gray] (0.5,-0.5) circle (0.2);
% blue
\path[preaction={clip, postaction={draw=blue, line width=4,fill=blue!30,opacity=0.5}}] (0,0) to[out=180,in=270] (-1,0.25) -- (-1,-0.75) to[out=270,in=180] (0.25,-1.5) to[out=180,in=270] (0,-1) -- cycle;
\path[draw=blue, line width=4] (0,0) to[out=180,in=270] (-1,0.25) to[out=90,in=180] (0.5,0.5);
% red
\path[preaction={clip, postaction={draw=red, line width=4,fill=red!30,opacity=0.5}}] (0,0) to[out=0,in=270] (1.5,0.25) -- (1.5,-0.75) to[out=270,in=0] (0.25,-1.5) to[out=180,in=270] (0,-1) -- cycle;
\path[draw=red, line width=4] (0,0) to[out=0,in=270] (1.5,0.25) to[out=90,in=0] (0.5,0.5);
% gray
\shade [ball color=black] (0,0) circle (0.2);
\shade [ball color=black] (0.5,0.5) circle (0.2);
\shade [ball color=gray] (0,-1) circle (0.2);
}
%%%%%%%%%%%%%%%%%%%%%
\tikz{
\clip (-1.5,-2) rectangle (2,2);
% blue
\path[preaction={clip, postaction={draw=blue, line width=4,fill=blue!30,opacity=1}}] (0.5,0.5) to[out=180,in=90] (-1,0.25) -- (-1,-0.5) to[out=270,in=180] (0.25,-1.25) to[out=45,in=270] (0.5,-0.25) -- cycle;
% red
\path[preaction={clip, postaction={draw=red, line width=4,fill=red!30,opacity=1}}] (0.5,0.5) to[out=0,in=90] (1.5,0.25) -- (1.5,-0.5) to[out=270,in=0] (0.25,-1.25)  to[out=45,in=270] (0.5,-0.25) -- cycle;
% green
\path[preaction={clip, postaction={draw=green, line width=4,fill=green!30,opacity=1}}] (0.5,0.5) to[out=0,in=90] (1.5,0.25) -- (1.5,1) to[out=90,in=0] (0.25,1.75) to[out=180,in=90] (-1,1) -- (-1,0.25) to[out=90,in=180] cycle;
% gray
\shade [ball color=black] (0.5,0.5) circle (0.2);
% blue
\path[preaction={clip, postaction={draw=blue, line width=4,fill=blue!30,opacity=0.75}}] (0,0) to[out=180,in=270] (-1,0.25) -- (-1,-0.5) to[out=270,in=180] (0.25,-1.25) to[out=180,in=270] (0,-0.75) -- cycle;
% red
\path[preaction={clip, postaction={draw=red, line width=4,fill=red!30,opacity=0.75}}] (0,0) to[out=0,in=270] (1.5,0.25) -- (1.5,-0.5) to[out=270,in=0] (0.25,-1.25) to[out=180,in=270] (0,-0.75) -- cycle;
% green
\path[preaction={clip, postaction={draw=green, line width=4,fill=green!30,opacity=0.5}}] (0,0) to[out=0,in=270] (1.5,0.25) -- (1.5,1) to[out=90,in=0] (0.25,1.75) to[out=180,in=90] (-1,1) -- (-1,0.25) to[out=270,in=180] cycle;
% gray
\shade [ball color=black] (0,0) circle (0.2);
}
\caption{ An iteration in the proof of Theorem~\ref{null-cobordant-converse}. Left to right: (1)~A~null-cobordant manifold $N = \partial X$ is represented by two black points. Two copies of~$X$, shown as blue and red curves, glue along~$N$ to form the double~$DX$, where $N$ is the entire 2-color class. (2)~The product $DX \times [0,1]$ from the proof of Lemma~\ref{doubleisboundary}. (3)~Folding the two halves of $DX \times \{0\}$ gives a manifold~$Z$ bounded by $\partial Z = DX$. (4)~Applying Lemma~\ref{double}, the 3-color class in the double~$DZ$ is the 2-color class in~$\partial Z$, which is~$N$ by induction.}
\label{doubling}
\end{figure} 

First, we check that the triangulation of~$Z$ is simplicial, with distinct simplices having distinct sets of vertices. Indeed, a face of $DX \times [0,1]$ with all vertices in the identified parts $(DX \times \{0\})$ and $(\partial X \times [0,1])$ must be fully contained in one of these parts, since no edge connects $\partial X \times \{1\}$ to $(DX \setminus\partial X) \times \{0\}$ by the above assumption.

Next, we verify that $Z$ is combinatorial, with vertex links that are balls or spheres. Let $v \in DX \times\{0\}$, so that $v \sim \tau(v)$ in $Z$. Its link consists of two glued parts, $\textrm{lk}_Z(v) = B \cup \tau(B)$ where $B = \textrm{lk}_{X \times [0,1]}(v)$ and $\tau(B)$ is the mirror image in other half of~$Z$. Each of the parts $B$ and $\tau(B)$ is a ball, as a link of a boundary vertex in a combinatorial manifold, and embeds in $Z$ since it comes from one half. Their boundaries are two spheres $S$ and $\tau(S)$, where $S = \partial B = \mathrm{lk}_{\partial(X \times [0,1])}(v)$. Note that $S \subset (X \times \{0\}) \cup (\partial X \times [0,1])$, since this set includes all boundary simplices that contain $v \in X \times \{0\}$, and so $S$ and~$\tau(S)$ are completely identified in~$Z$. On the other hand, the interiors of $B$ and $\tau(B)$ are disjoint from glued simplices since non-boundary faces that contains $v = (x_0,0)$ must have at least one vertex $(x_1,1)$ where $x_1 \not \in \partial X$. In conclusion, $\mathrm{lk}_Z(v) = B \cup \tau(B)$ is a combinatorial sphere.

The link $\mathrm{lk}_{DX \times [0,1]}(v)$ of a vertex $v \in DX \times \{1\}$ is a combinatorial ball, as $v$ is a boundary vertex. We show that this ball embeds in $Z$ under the gluing of $X \times \{0\}$ with $\tau(X) \times \{0\}$. If $v \not\in \partial X \times \{1\}$ then $v$ only has neighbors in its half, either $X \times [0,1]$ or $\tau(X) \times [0,1]$. Otherwise, by our choice of the product's triangulation, a vertex $v = (x_1,1)$ with $x_1 \in \partial X$ has neighbors $(x_0,0)$ only for $x_0 \in \partial X$, and these embed under $DX \times [0,1] \to Z$. In conclusion, $\mathrm{lk}_Z(v)$ is a combinatorial ball for each $v \in DX \times \{1\}$ and otherwise a combinatorial sphere, so $Z$~is a combinatorial manifold with boundary~$DX$ as required.
\end{proof}

\begin{proof}[Proof of Theorem~\ref{null-cobordant-converse}]
Null-cobordism is necessary by Lemma~\ref{null-cobordant}. We show it is sufficient by induction on~$k$. The case $k=1$ is trivially true. $N$~itself is the all-color class in a coloring of~$N$ with one color. For larger~$k$, we use the assumption that $N = \partial X$ for some manifold~$X$.

In the case $k=2$, we use the double~$DX$ of~$X$, where the two copies of $X$ have different colors, as in the leftmost part of Figure~\ref{doubling}. Lemma~\ref{double} with $k=1$ describes in detail such a triangulation and 2-coloring of~$DX$, where the 2-color class is homeomorphic to~$\partial X = N$.

For $k=3$, we first apply Lemma~\ref{doubleisboundary} to show that also $DX$ is a boundary $\partial Z$ for some manifold~$Z$, illustrated in the next two parts of Figure~\ref{doubling}. By the case $k=2$, we 2-color~$Z$ such that the 2-color class of $\partial Z$ is homeomorphic to~$N$. Lemma~\ref{double} with $k=2$ gives a triangulation and a 3-coloring of the double~$DZ$, where the 3-color class is homeomorphic to~$N$. See the last part of Figure~\ref{doubling}.

For general $k$, we iterate this construction, each time taking a $(k-1)$-colored manifold that gives~$N$, adding a null-cobordism using Lemma~\ref{doubleisboundary}, and $k$-coloring the double as in Lemma~\ref{double}. This gives a $k$-colored closed manifold where $N$ is the all-color class.
\end{proof}

Remark~\ref{sw-numbers} implies that Theorem~\ref{null-cobordant-converse} is equivalent to the following.
\begin{corollary}
A manifold~$N$ occurs as the all-color class in some closed $k$-colored manifold~$M$ with $k \geq 2$ if and only  if  all the Stiefel-Whitney numbers of $N$ are trivial.
\end{corollary}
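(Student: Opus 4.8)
The plan is to read off the corollary as a formal consequence of Theorem~\ref{null-cobordant-converse} together with the cobordism-theoretic fact recorded in Remark~\ref{sw-numbers}. First I would observe that the ``all-color class'' appearing in the statement is, by Definition~\ref{class}, exactly the unique $k$-color class, so no new content is hidden in the change of terminology. Thus Theorem~\ref{null-cobordant-converse} applies verbatim and yields the first equivalence: a combinatorial manifold $N$ occurs as the all-color class in some closed $k$-colored manifold $M$ (for some, equivalently every, $k\ge 2$) if and only if $N$ is null-cobordant, i.e.\ $N$ is the boundary of a compact manifold.

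Second, I would invoke the characterization of null-cobordism by Stiefel--Whitney numbers. In the smooth category this is Thom's theorem: a closed smooth manifold bounds if and only if all of its Stiefel--Whitney numbers vanish. Since $N$ is a combinatorial (PL) manifold, the relevant statement is the PL version, which holds by the extension cited in Remark~\ref{sw-numbers} (Buoncristiano--Rourke--Sanderson); here the Stiefel--Whitney classes, and hence the Stiefel--Whitney numbers, are the well-defined PL invariants coming from the PL tangent microbundle. Concatenating the two equivalences --- $N$ obtainable $\iff$ $N$ null-cobordant $\iff$ all Stiefel--Whitney numbers of $N$ vanish --- gives the corollary.

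There is essentially no genuine obstacle: the work is entirely carried out in Theorem~\ref{null-cobordant-converse} and in the cited cobordism theorems. The only point requiring care is bookkeeping of categories --- one must ensure that ``null-cobordant'' as used in Theorem~\ref{null-cobordant-converse} (bounding a combinatorial manifold, as produced by Lemma~\ref{doubleisboundary} and the doubling construction) matches the hypothesis of the PL cobordism theorem, and that the converse direction produces a \emph{combinatorial} null-cobordism of $N$ so that the construction of Theorem~\ref{null-cobordant-converse} can be applied. Both are immediate from the statements already in hand, so the proof is a short two-line deduction.
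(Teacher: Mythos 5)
Your proof is correct and is exactly the paper's argument: the corollary is stated there as an immediate consequence of Theorem~\ref{null-cobordant-converse} combined with Remark~\ref{sw-numbers} (Thom's theorem and its PL extension). Your extra remark about matching the PL category on both sides is a sensible point of care but does not change the route.
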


Stiefel-Whitney classes play a more essential role in our next question. Question~(I) have asked which submanifolds~$N$ arise from our coloring construction, without placing any restriction on the ambient~$M$. We now consider what submanifolds can occur as submanifolds of a fixed ambient manifold~$M$. Natural choices for $M$ include a sphere~$S^d$, or  a ball~$B^d$ when considering submanifolds with boundary.

\begin{question*}[\textbf{II}]
Given a combinatorial manifold~$M$ and an integer $k \geq 2$, what manifolds~$N$ of codimension $(k-1)$ arise as the entire $k$-color class from some triangulation and coloring of~$M$?
\end{question*}

Clearly the answer depends on embeddability. If there is no embedding of $N$ in~$M$ then $N$~cannot arise even as a component of a $k$-color class. For example, a closed nonorientable $d$-manifold does not embed in~$S^{d+1}$ and cannot be a 2-color class. Similarly, a lens space $L(p,q)$ does not embed in~$S^4$, so it does not occur as a 2-color class. Note that Lemma~\ref{null-cobordant} does not give an obstruction to the realization of lens spaces in any particular 4-manifold, since all orientable 3-dimensional manifolds are null-cobordant, as follows from work of Wallace~\cite{wallace1960modifications} and Lickorish~\cite[Theorem~3]{lickorish1962representation}.

The next proposition shows that nonorientability poses an  obstruction to realizing~$N$ as a $k$-color class in an orientable manifold~$M$, even when $N$ does embed in~$M$. Thus, a~Klein bottle can be embedded in~$S^4$, but is not a 3-color class in any 3-coloring of $S^4$. Similarly, for manifolds with boundary, a M\"obius strip is not a 2-color class in a 2-colored solid torus, nor a 3-color class in a 3-colored~$B^4$. Moreover, a M\"obius strip is not a component of any of the 2-color classes in a 3-colored~$S^3$.

\begin{proposition}
\label{orientation}
Let $N$ be a component of a $k$-color class in an orientable manifold~$M$. Then $N$ is orientable.
\end{proposition}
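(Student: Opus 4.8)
The plan is to deduce orientability of $N$ from orientability of $M$ by controlling how $N$ sits inside $M$. Write $X$ for the $k$-color class containing $N$; by Lemma~\ref{manifold_in_M} it is a locally flat combinatorial submanifold of $M$ of codimension $k-1$, so it has a normal bundle $\nu_X$ with $TM|_X\cong TX\oplus\nu_X$. I will show that $\nu_X$ is trivial. Granting this, since $M$ is orientable we get $0=w_1(TM)|_X=w_1(TX)+w_1(\nu_X)=w_1(TX)$ in $H^1(X;\mathbb{Z}/2)$, so $X$, and hence its component $N$, is orientable.

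To analyze $\nu_X$, first replace the Voronoi coloring by the linear coloring of Lemma~\ref{manifold_in_simplex}. The per-simplex homeomorphisms $h_\sigma$ built in that proof are defined compatibly on faces, so they assemble into a global PL homeomorphism $H\colon M\to M$ carrying $X$ onto the linear $k$-color class $X^\star$; since homeomorphisms preserve triviality of the normal bundle, it suffices to treat $X^\star$. Now, for each color $C_j$, define $F_j\colon M\to[0,1]$ by sending $x$ to the sum of the barycentric coordinates of $x$ at those vertices of $M$ colored $C_j$. Each $F_j$ is continuous on $M$ and affine on every simplex, and $F_1+\dots+F_k\equiv 1$, so $\Phi:=(F_1,\dots,F_k)$ is a PL map $M\to\Delta^{k-1}$ into the standard simplex $\Delta^{k-1}=\{(t_1,\dots,t_k):t_i\geq 0,\ \sum_i t_i=1\}$, and by definition of the linear coloring $X^\star=\Phi^{-1}(b)$, where $b$ is the barycenter of $\Delta^{k-1}$.

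The key point is that $\Phi$ is transverse to $b$. Every point of $X^\star$ lies in a simplex of $M$ whose vertices collectively carry all $k$ colors, and on any such $d$-simplex $\sigma$ the restriction $\Phi|_\sigma$ is, by the computation in the proof of Lemma~\ref{manifold_in_simplex}, the affine map whose level sets are the parallel translates of the $(d-k+1)$-dimensional plane slice $\{F_1=\dots=F_k\}\cap\sigma$; in particular $\Phi|_\sigma$ is a submersion, $X^\star\cap\sigma$ is cut out transversally, and its normal space in $\sigma$ is carried isomorphically onto $T_b\Delta^{k-1}$. (Simplices missing one of the colors are disjoint from $X^\star$.) Since $\Phi$ is globally continuous and affine-transverse on each simplex meeting $X^\star$, the normal bundle of $X^\star$ is identified with the pullback $\Phi^*$ of the normal bundle of the point $b$ in $\Delta^{k-1}$, which is trivial of rank $k-1$. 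Hence $\nu_{X^\star}$ is trivial, and with the first paragraph this proves $N$ is orientable.

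The step I expect to be the main obstacle is this last assertion: promoting the easy per-simplex statement ``$\Phi|_\sigma$ is an affine submersion near $X^\star\cap\sigma$'' into a genuine global identification $\nu_{X^\star}\cong\Phi^*(\text{trivial bundle})$. One must check that the normal trivializations induced by neighboring top-dimensional simplices agree along their shared faces, and more generally that the local picture is correct at points whose carrier simplex has dimension strictly between $k-1$ and $d$. This is where PL regular-neighborhood and transversality theory enter — e.g. by producing a regular neighborhood $R$ of $X^\star$ on which $\Phi$ restricts to a fiberwise homeomorphism onto a neighborhood of $b$, or alternatively by writing down a coherent orientation of the normal bundle directly from the functions $F_j$ and verifying local consistency. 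Everything else in the argument is formal.
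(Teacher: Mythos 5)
Your strategy --- show the normal bundle of the color class is trivial and conclude $w_1(TX)=w_1(TM)|_X=0$ --- is a genuinely different route from the paper's, and its conclusion (a trivial normal $\Delta^{k-1}$-neighborhood) is indeed true and is asserted elsewhere in the paper (see condition (2) of Definition~\ref{stratification} and the zero-framing used in Proposition~\ref{3color4ball}). But as written the argument has a real gap, and it is exactly the one you flag: the passage from ``$\Phi|_\sigma$ is an affine submersion on each top simplex'' to a global identification $\nu_{X^\star}\cong\Phi^*(\text{trivial})$. In the PL category this is not a formality. The affine differentials of $\Phi$ genuinely disagree across codimension-one faces, normal microbundles of PL submanifolds need not exist or be unique in general, and the whole content of the claim lives at points whose carrier simplex has dimension strictly between $k-1$ and $d$, where the local model is a join $S^{m-k}\ast S^{d-m-1}$ rather than a single affine slice. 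You would need to actually produce the product regular neighborhood (or a coherent co-orientation from the functions $F_j$) and verify consistency there; none of that is done. A second, independent problem is scope: the proposition is applied in the paper to components of $k$-color classes in manifolds carrying \emph{more} than $k$ colors (e.g.\ a M\"obius strip as a component of a 2-color class in a 3-colored $S^3$). In that case $\sum_j F_j\not\equiv 1$, the class is a manifold with boundary cut out by inequalities as well as equalities (Lemma~\ref{submanifold}), and it is not a level set $\Phi^{-1}(b)$, so your argument does not apply without modification.

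The paper avoids all of this with an elementary induction that you should compare against. If a component $N$ of the $k$-color class were nonorientable, then by Lemma~\ref{submanifold} it sits as a codimension-zero piece of the boundary of the $(k-1)$-color class $N'$ obtained by dropping one color; hence $\partial N'$ and therefore $N'$ are nonorientable; iterating through $N'',\dots$ one reaches a $1$-color class, which is codimension zero in $M$, contradicting orientability of $M$. This uses only ``the boundary of an orientable manifold is orientable'' and ``a codimension-zero submanifold of an orientable manifold is orientable,'' handles the case of extra colors automatically, and generalizes verbatim to all Stiefel--Whitney classes (Proposition~\ref{stiefelwhitney}). If you do want to complete your normal-bundle route, it would yield a stronger statement ($\nu_X$ trivial, hence all $w_j(N)$ pulled back from $M$), but you must supply the regular-neighborhood argument rather than defer it.
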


\begin{proof}
We use the two following facts.  First, the boundary of an orientable manifold is orientable. Second, a   codimension-zero submanifold of an orientable manifold is orientable. The proof proceeds by taking a sequence of nested submanifolds between $N$ and~$M$, each of which is either a boundary or a codimension-zero submanifold of the next one.

Let $N$ be a component of a $k$-color class in~$M$, and suppose $N$ is nonorientable. Consider a $(k-1)$-color class~$N'$ obtained by omitting one of the $k$ colors. The boundary~$\partial N'$ is nonorientable too, because by Lemma~\ref{submanifold} it contains~$N$ as a codimension-zero submanifold. Since $\partial N'$ is nonorientable, so is the nonclosed manifold~$N'$. Again, the $(k-1)$-color class~$N'$ is part of the boundary of a $(k-2)$-color class~$N''$. This manifold~$N''$ is also nonorientable, since it has a codimension-one nonorientable manifold~$N'$ in its boundary. Repeating, we deduce that one of the 1-color classes, and hence also $M$ itself, is non-orientable, contradicting our assumption.
\end{proof}

Nonorientability is a special case of a Stiefel-Whitney class, which obstructs the realization of a manifold~$N$ as a submanifold of~$M$. The next proposition extends this obstruction to all Stiefel-Whitney classes. The $j$th Stiefel-Whitney class of the tangent bundle of a manifold~$N$ is a specific element of the $j$th cohomology, $w_j(N) \in H^j(N,\mathbb{Z}_2)$, see \cite{milnor1974characteristic}. A~manifold~$N$ is nonorientable if and only if $w_1(N) \neq 0$. Proposition~\ref{stiefelwhitney} shows that $w_j(N) \neq 0$ is an obstruction to realizability for any~$j$. Note that having nontrivial Stiefel--Whitney classes is not a special case of Remark~\ref{sw-numbers}, because there we consider the related but nonequivalent invariants, Stiefel--Whitney \emph{numbers}.

As a specific example, consider $N = \mathbb{C}P^2 \# \overline{\mathbb{C}P^2}$, the connected sum of a complex projective plane and a copy with an oppositely oriented copy. Since $N$ is null-cobordant, Theorem~\ref{null-cobordant-converse} does not pose an obstruction to realizing it as the all-color class in some manifold~$M$. For $d$ large enough, $N$~embeds in~$S^d$, by the Whitney Embedding Theorem, so  $N$ could potentially be realizable by a $(d-3)$-coloring of the $d$-sphere.  Proposition~\ref{orientation} does not give an obstruction, since $N$ is orientable. However, the next proposition shows that $N$ is not obtained from any coloring of $S^d$ since $w_2(S^d)=0$ while $w_2(N) \in H^2(N,\mathbb{Z}_2) = \mathbb{Z}_2 \oplus \mathbb{Z}_2$ is nontrivial.

\begin{proposition}
\label{stiefelwhitney}
Let $N$ be a component of a $k$-color class in a manifold~$M$ with trivial $j$th Stiefel-Whitney class, $w_j(M)=0$. Then $w_j(N)=0$.
\end{proposition}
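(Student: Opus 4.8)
The plan is to follow the proof of Proposition~\ref{orientation}, upgrading the two facts about orientability used there --- that the boundary of an orientable manifold is orientable, and that a codimension-zero submanifold of an orientable manifold is orientable --- to their $j$th Stiefel--Whitney analogues. The two facts I would record are standard and valid for combinatorial manifolds, where $w_j$ is defined through the tangent microbundle \cite{milnor1974characteristic}: (i) if $Y$ is a codimension-zero submanifold of $X$ with inclusion $\iota$, then $TY=\iota^{*}TX$, so $w_j(Y)=\iota^{*}w_j(X)$; and (ii) if $\iota\colon\partial X\hookrightarrow X$ is the boundary inclusion, then $TX|_{\partial X}\cong T(\partial X)\oplus\varepsilon^{1}$ with $\varepsilon^{1}$ a trivial line bundle (the normal direction), so the Whitney sum formula gives $w_j(\partial X)=\iota^{*}w_j(X)$. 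In particular $w_j$ is natural under both kinds of inclusion.

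Next I would set up the chain of nested color classes. Let $\{C_1,\dots,C_k\}$ be the colors of $M$ and let $N$ be a component of the $k$-color class. For $1\le m\le k$ let $N_m$ be the $m$-color class of $\{C_1,\dots,C_m\}$; since imposing more colors removes points, $N_1\supseteq N_2\supseteq\dots\supseteq N_k$. By Lemma~\ref{submanifold}, $N_m$ is a combinatorial submanifold of dimension $d-m+1$ with $\partial N_m=(N_m\cap\partial M)\cup N_m^{+}$, where $N_m^{+}$ is the part of $N_m$ carrying more than $m$ colors. The two structural observations are: $N_1$ is a codimension-zero submanifold of $M$; and for $2\le m\le k$ every point of $N_m$ carries at least $m>m-1$ colors including $C_1,\dots,C_{m-1}$, so $N_m\subseteq N_{m-1}^{+}\subseteq\partial N_{m-1}$, and as $\dim N_m=d-m+1=\dim\partial N_{m-1}$ this is a codimension-zero inclusion into the closed manifold $\partial N_{m-1}$.

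The conclusion then comes by induction on $m$. For $m=1$, naturality under the codimension-zero inclusion $N_1\hookrightarrow M$ gives $w_j(N_1)=\iota^{*}w_j(M)=0$ by hypothesis. Assuming $w_j(N_{m-1})=0$, fact (ii) gives $w_j(\partial N_{m-1})=0$, and then fact (i) applied to the codimension-zero inclusion $N_m\hookrightarrow\partial N_{m-1}$ gives $w_j(N_m)=0$. Taking $m=k$ and restricting to the component $N\subseteq N_k$ yields $w_j(N)=0$.

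The main point requiring care is that the ambient objects are combinatorial rather than smooth manifolds, so one must confirm that Stiefel--Whitney classes and the two naturality statements above hold in the PL category; this is routine via tangent microbundles, and the $j=1$ case is exactly what the proof of Proposition~\ref{orientation} already uses. Everything else --- the color-counting that yields $N_m\subseteq\partial N_{m-1}$ and the dimension bookkeeping --- is straightforward once Lemma~\ref{submanifold} is in hand, so I do not anticipate a serious obstacle.
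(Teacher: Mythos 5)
Your proposal is correct and follows essentially the same route as the paper: the same two naturality facts (for boundaries via the Whitney sum formula, and for codimension-zero inclusions), applied along the same chain of nested color classes supplied by Lemma~\ref{submanifold}, exactly as in the proof of Proposition~\ref{orientation}. The only cosmetic difference is that you run the induction directly from $M$ down to $N$, whereas the paper phrases it contrapositively, propagating $w_j(N)\neq 0$ up to $M$.
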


\begin{proof}
As with orientation, each boundary component of a   manifold with trivial Stiefel--Whitney class~$w_j$ also has $w_j = 0$, and a codimension-zero submanifold of a manifold with $w_j =0$ also has $w_j = 0$. These facts follow from the naturality of Stiefel--Whitney classes and from the Whitney product formula~\cite{milnor1974characteristic}. 

The rest of the proof is similar to that of Proposition~\ref{orientation}. If a component of a $k$-color class satisfies $w_j(N) \neq 0$, then so does a $(k-1)$-color class~$N'$ having $N$ as one of its boundary components, and also a $(k-2)$-color class $N''$, and so on all the way up to~$M$. This yields a contradiction since it implies $w_j(M) \neq 0$. 
\end{proof}

\medskip
Finally, we consider the question of realizing a submanifold~$N$ when not just $M$ and~$N$ are specified, but also the isotopy class of the embedding of $N$ in~$M$. 

\begin{question*}[\textbf{III}] \label{III}
Given an embedding $N \subset M$ of codimension $k-1$, can it be obtained up to ambient isotopy as the entire $k$-color class for some $k$-coloring and triangulation of $M$? Can it be a component of the $k$-color class? 
\end{question*}

We first discuss the following example, which investigates how this question differs from Question~(II).

\begin{example}
The Klein bottle occurs as the 3-color class in a 4-manifold $M$ given as in Theorem~\ref{null-cobordant-converse}. The constructed $M$ is non-orientable and contains a codimension-one submanifold that is the union of two solid Klein bottles. Now consider a second embedding of the Klein bottle into $M$ in which it is contained in a small 4-ball. This embedding is not a 3-color class in any 3-coloring of $M$, since in such a coloring a neighborhood of the Klein bottle within the 4-ball would necessarily be non-orientable, by the argument of Proposition~\ref{orientation}. 
\end{example}

Reasoning similar to this example combines with Proposition~\ref{stiefelwhitney} to show that for any embedding $f:N \hookrightarrow M$ induced by a coloring, all Stiefel-Whitney classes $w_j(N)$ are in the image of $f^*:H^*(M;\mathbb{Z}_2) \to H^*(N;\mathbb{Z}_2)$.
However, it is not clear whether an embedding $f:N \to M$ has a realization as a component of a $k$-coloring whenever all the Stiefel-Whitney classes of $w_j(N)$ are in the image of $f^*:H^*(M;\mathbb{Z}_2) \to H^*(N;\mathbb{Z}_2)$.

\medskip
To approach Question~(III), it is useful to have a general method to realize smooth submanifolds as color classes of triangulations.

\subsection*{Coloring Stratified Manifolds} 
\label{stratified}

Our discussion has been framed around triangulated manifolds, but many of the results we obtain hold in a more general setting described by a class of stratified smooth manifolds. We now define these manifolds. In this setting, we assign colors to regions in a manifold, so that the local structure where regions intersect is identical to that determined by the Voronoi coloring of a triangulation.

\begin{definition}
\label{stratification}
A {\em $k$-colored stratification} of a smooth $d$-manifold $M$ is a decomposition of $M$ into a union of closed codimension-0 submanifolds $R_1, R_2,  \dots R_k$, each of which is assigned one of $k$ colors, with the following properties:
\begin{enumerate}
\item 
Each component of a color class of $r$ colors $C \subseteq R_{i_1} \cap R_{i_2} \cap \dots \cap R_{i_r} $ is a smooth codimension-$(r-1)$ submanifold of $M$, possibly with boundary. The boundary, if nonempty, is contained in the union of color classes of the original $r$ colors and one or more additional colors.
\item  
Each component $C$ as above has a normal tubular neighborhood $N(C) \subset M$ that is diffeomorphic to $C \times \Delta^{r-1}$, with the property that $C \times \{v_1\},\dots,C \times \{v_r\}$ for the $r$ vertices of~$\Delta^{r-1}$ lie in the regions $R_{i_1}, R_{i_2},  \dots, R_{i_r}$ with $r$ distinct colors.
\end{enumerate}
\end{definition}

Note that this structure is found in a neighborhood of $r$-color classes formed from Voronoi colorings of triangulated manifolds.

When a $k$-color stratified $d$-dimensional manifold $M$ also has a triangulation with an assignment of colors given to its vertices, we can compare the sets of submanifolds that appear as color classes in the stratification to the color classes corresponding to the triangulation.  If there is an isotopy of $M$ that carries the first set of color classes to the second, then we say that the two colorings are {\em compatible}. We now show how to start with a $k$-color stratified $d$-dimensional manifold~$M$ and construct a triangulation that gives a compatible coloring.

There is a standard way to triangulate a product of two simplicial complexes without introducing additional vertices. The following lemma describes this product and its $r$-color class for a product of a complex with an $r$-colored $(r-1)$-simplex. 

\begin{lemma}
\label{product}
Let $\Sigma$ be a simplicial complex with vertex set~$U$, and $\Delta$ an $(r-1)$-simplex with vertex set~$V$, and fix linear orderings on $U$ and~$V$. The product $\Sigma \times \Delta$ has vertex set $U \times V$. There is a triangulation of $\Sigma \times \Delta$ having vertices $U \times V$ and $d$-faces $\{(u_0,v_0), \dots, (u_d,v_d)\}$, with $d+1$ distinct ordered pairs satisfying $u_0 \leq u_1 \leq \dots \leq u_d$ and $v_0 \leq v_1 \leq \dots \leq v_d$, where each $u_i \in U$, $v_j \in V$, and $\{u_0,\dots,u_d\}$ is a simplex in $\Sigma$ when ignoring repetitions. 

Moreover, if the vertices of $\Delta$ are assigned $r$ colors and the vertex $(u_i,v_j)$ is assigned the color of~$v_j$, then the Voronoi construction gives rise to an $r$-color class isotopic to $\Sigma \times p$, where $p$ is the barycenter of~$\Delta$.
\end{lemma}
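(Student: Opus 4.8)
The plan is to treat the two assertions separately: the first is the standard prismatic subdivision of a product, and the second is where the real content lies.

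For the triangulation, I would first describe the construction on a single simplex. If $\sigma = u_0u_1\cdots u_m$ is a simplex of $\Sigma$ with its induced ordering, then $|\sigma|\times|\Delta|$ is subdivided into the simplices spanned by the monotone lattice paths in the grid $\{u_0,\dots,u_m\}\times\{v_0,\dots,v_{r-1}\}$: a path visiting $d+1$ distinct pairs $(u_0,v_0)\le(u_1,v_1)\le\cdots\le(u_d,v_d)$ gives a $d$-simplex, and a top-dimensional one is a staircase of length $m+r-1$ using every $u_i$ and every $v_j$. One checks in the usual way — by merging the two weakly increasing coordinate sequences attached to the barycentric coordinates of a point of $|\sigma|\times|\Delta|$ — that these simplices cover the product, meet along common faces, and introduce no new vertices. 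Since the orderings on $U$ and $V$ restrict to faces, the subdivisions of $\sigma\times\Delta$ for varying $\sigma$ agree on overlaps and glue to the asserted triangulation of $\Sigma\times\Delta$, whose $d$-simplices are precisely the monotone chains of $d+1$ distinct pairs whose $U$-coordinates (with repetitions deleted) span a simplex of $\Sigma$. I would present this step briefly and defer to the standard references, as it is routine.

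For the color class, let $\pi\colon\Sigma\times\Delta\to\Delta$ be the projection, so $\Sigma\times p=\pi^{-1}(p)$, and color $v_0,\dots,v_{r-1}$ by the distinct colors $C_1,\dots,C_r$, so that $(u_i,v_j)$ gets color $C_{j+1}$. The first observation is that \emph{every top-dimensional simplex $\tau$ of the triangulation carries all $r$ colors}, since by the staircase description it uses all $r$ vertices of $\Delta$. Hence Lemma~\ref{manifold_in_simplex} applies to $\tau$ (allowing repeated colors, exactly as in the proof of Lemma~\ref{submanifold}). The key point is the identification $\phi=\pi|_\tau$: the linear map $\phi(x)=\bigl(\sum_{i\in I_1}x_i,\dots,\sum_{i\in I_r}x_i\bigr)$ built there from the color partition $I_j$ of the vertices of $\tau$ is, under the identification of $\Delta$ with barycentric coordinates, exactly $\pi|_\tau$, because $I_j$ is precisely the set of vertices of $\tau$ lying over $v_{j-1}$. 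Consequently, if we define the linear coloring $c^\star$ on all of $\Sigma\times\Delta$ as the pullback along $\pi$ of the Voronoi coloring of $\Delta$ (with vertices colored $C_1,\dots,C_r$), then its all-color class is $\pi^{-1}(\text{barycenter of }\Delta)=\Sigma\times p$, and on each top-dimensional $\tau$ this $c^\star$ coincides with the linear coloring of $\tau$ from Lemma~\ref{manifold_in_simplex}.

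It then remains to pass from $c^\star$ back to the Voronoi coloring. On each simplex $\tau$, the proof of Lemma~\ref{manifold_in_simplex} supplies a PL self-homeomorphism $h_\tau$, isotopic to the identity, built by induction on skeleta, equal to the identity on monochromatic faces, and satisfying $c^\star\circ h_\tau=c$. In our setting the monochromatic faces are exactly the simplices lying in a wall $\Sigma\times\{v_j\}$, so the inductive construction is globally consistent: on a shared face the map is fixed before either top simplex is reached, and on the walls it is the identity. The $h_\tau$ therefore assemble into one PL homeomorphism $h$ of $|\Sigma\times\Delta|$, isotopic to the identity, with $c^\star\circ h=c$ everywhere; thus $h$ carries the Voronoi all-color class $W$ onto $\Sigma\times p$, and the cellwise straight-line isotopy from $h$ to the identity (each relocated barycenter moving back inside its own simplex) carries $W$ to $\Sigma\times p$. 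The main obstacle I anticipate is precisely this globalization — verifying that the per-simplex maps of Lemma~\ref{manifold_in_simplex} patch into a genuine PL homeomorphism that remains isotopic to the identity; everything else is either standard or an unwinding of definitions. A variant that sidesteps $h$ is to reposition the barycenters of the barycentric subdivision (as in the proof of Lemma~\ref{double}) so that the vertex of $W$ at an all-$r$-colored face $\tau$ lies on $\tau\cap\pi^{-1}(p)$, making $W$ literally equal to $\Sigma\times p$, at the cost of an equivalent check that the repositioned subdivision is still a valid triangulation.
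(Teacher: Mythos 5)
Your proposal is correct and follows the route the paper itself takes: the paper states Lemma~\ref{product} without a formal proof, treating the staircase triangulation as standard and illustrating the color-class claim only by the prism example and a pointer to the proof of Lemma~\ref{double}, which carries out exactly the top-simplex analysis you describe. Your key identification of the map $\phi$ from Lemma~\ref{manifold_in_simplex} with $\pi|_\tau$ is the right observation, and your gluing of the per-simplex homeomorphisms $h_\tau$ is sound because that construction proceeds skeleton by skeleton and depends only on each face and its own coloring, so you have in fact supplied details the paper leaves implicit.
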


For example, the product of a triangle $ABC$ and an interval $[0,1]$ is a prism with three 3-simplices:
$\{A_0,A_1,B_1,C_1\}$, $\{A_0,B_0,B_1,C_1\}$, and $\{A_0,B_0,C_0,C_1\}$. If we color $[0,1]$ so that $0$ is red  and $1$ is blue, then the vertices 
$\{A_0,B_0,C_0 \}$ are red and the vertices 
$\{A_1,B_1,C_1\}$ blue. The red-blue color class in the product is isotopic to the triangle $ABC \times (1/2)$ separating the two triangular ends of the prism. See the proof of Lemma~\ref{double} and Figure~\ref{prisms} there for a more detailed demonstration.

We apply Lemma~\ref{product} to triangulate the product of a triangulated manifold  $C$  with a simplex $\Delta$. Note that the  manifold $C$ may have boundary, in which case the triangulation on  $\partial C \times \Delta$ coincides with the triangulation obtained by applying  Lemma~\ref{product} to
$\partial C$ directly. 

\begin{proposition}
\label{strata}
Let $M$ be a manifold with a $k$-colored stratification. Then $M$ has a compatible $k$-colored triangulation.
\end{proposition}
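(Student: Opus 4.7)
The plan is to induct on codimension, handling one stratum at a time from the $k$-color class (of codimension $k-1$) downward to the $1$-color regions, and to build the triangulation of each tubular neighborhood using Lemma~\ref{product}. Since Lemma~\ref{product} already certifies that the Voronoi $r$-color class of $C\times\Delta^{r-1}$ with the stated vertex coloring is isotopic to $C\times\{p\}$, the whole argument reduces to producing coherent local triangulations and then fitting them together into a global one.

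First I would normalize the tubular neighborhoods of Definition~\ref{stratification}. After shrinking, one arranges that whenever $C'$ is an $(r+1)$-color stratum in $\overline{C}\setminus C$ for an $r$-color stratum $C$, the product trivialization $N(C)\cong C\times\Delta^{r-1}$, restricted to $N(C')\cap N(C)$, is induced from the trivialization $N(C')\cong C'\times\Delta^r$ via the face inclusion $\Delta^{r-1}\hookrightarrow\Delta^r$ that records the extra color. Compatible systems of tubular neighborhoods of this kind exist for any transverse stratification of a smooth manifold by codimension-zero submanifolds with manifold intersections; this compatibility is what will make the later gluing automatic.

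Next, inductively for $r$ from $k$ down to $1$, and for each $r$-color stratum $C$: the higher-codimension strata in $\overline{C}$ have already been processed in previous steps, which triangulates a collar neighborhood of $\partial C$ inside $C$. I extend this to a combinatorial triangulation of the PL-manifold $C$ (a routine PL extension), then apply Lemma~\ref{product} to triangulate $N(C)\cong C\times\Delta^{r-1}$ without introducing new vertices, assigning to each vertex $(u,v_j)$ the color of $v_j\in\Delta^{r-1}$. The compatibility set up in the previous paragraph ensures the new triangulation agrees with previously constructed triangulations on every $N(C')\cap N(C)$. After all strata have been processed, I fill in each residual codimension-zero piece $R_i\setminus\bigcup N(C^{(r)})$ by any PL extension of its already-triangulated boundary and color every new interior vertex with $C_i$.

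Lemma~\ref{product} then guarantees that the Voronoi $r$-color class produced inside each $N(C)$ is isotopic to $C\times\{p\}$, so stratum by stratum the triangulation's color classes are ambient isotopic to the given ones, yielding compatibility. The main obstacle is the first step: producing a system of tubular neighborhoods whose product trivializations honestly restrict to each other on overlaps, rather than merely being homeomorphic. This is a standard but nontrivial input from the theory of stratified spaces (it is substantially cleaner here than in the full Thom--Mather setup, because our strata arise as the singular locus of a transverse configuration of codimension-zero submanifolds). Once this is in place, the PL extensions of boundary triangulations, the per-stratum application of Lemma~\ref{product}, and the final filling of the codimension-zero bulk are all routine.
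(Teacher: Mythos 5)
Your proposal is correct and follows essentially the same route as the paper: descending induction over the strata by number of colors, extending the already-built triangulation of each stratum's boundary into the stratum, triangulating and coloring each normal neighborhood $C\times\Delta^{r-1}$ via Lemma~\ref{product}, and filling the residual codimension-zero regions at the end. The only difference is one of emphasis: you make explicit the need for a compatible system of tubular neighborhoods whose trivializations restrict to one another, a point the paper handles more briefly by shrinking neighborhoods and asserting that the product triangulations agree along $\partial C_1\times\Delta^{k-2}$.
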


\begin{proof}
Starting with a $k$-colored stratification of~$M$, we construct a compatible $k$-colored triangulation. 

Assume first that the intersection of all $k$ colors $M_k \ne \varnothing$. Then a component $C$ of~$M_k$ is a closed submanifold of~$M$ of codimension~$k-1$. $C$ has an embedded tubular neighborhood of radius $\epsilon >0$. Inside a normal neighborhood of $C$ of radius $\epsilon/3$ there exists a smaller neighborhood that is homeomorphic to $C \times \Delta^{k-1}$, with  the vertices of $\Delta^{k-1}$ lying in distinct colored regions. Fix a triangulation of $C$, and assign to it an arbitrary order. Then extend the triangulation to a triangulation of $C \times \Delta^{k-1}$ as in Lemma~\ref{product}.  Since the tubular neighborhood radius is less than $\epsilon/3$, we can do this simultaneously for all components of~$M_k$ without creating any overlaps.
Assigning to each vertex the color of the region in which it lies, we obtain a $k$-color class isotopic to $C$.  We do the same for other components of $M_k$.

Now consider a component $C_1$ of $M_{k-1}$, the intersections of $k-1$ color regions. If   $\partial C_1 =   \varnothing$ then we proceed as before. Otherwise $\partial C_1 \subset M_k$ and we can extend the triangulation of~$\partial C_1$ to a triangulation of $C_1$ and extend the previous vertex order to an arbitrary order on the additional vertices. Along the boundary,
$\partial C_1  \times  \Delta^{k-2} $ inherits a triangulation from the previous step. Using Lemma~\ref{product} we triangulate $  C_1 \times  \Delta^{k-2} $.  This triangulation agrees with the previous triangulation on  $\partial C_1  \times  \Delta^{k-2} $.  We continue in this way for color classes with decreasing numbers of colors.

Continuing in this way we construct a triangulation of all of $M$. Each vertex inherits a color from the region in which it lies, and each stratum meeting $r$ colors coincides with the $r$-color class induced from the triangulation.
\end{proof}

We now return to Question (III) which asks which embeddings of one manifold in another can be obtained as a $k$-color class.
We first answer it for the case of 1-manifolds in~$S^3$.

\begin{proposition}
\label{link}
Every link in the 3-sphere is obtained as the $3$-color class of some $3$-colored triangulated~$S^3$.
\end{proposition}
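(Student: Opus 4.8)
The plan is to realise $L$ as the $3$-color class of a $3$-colored \emph{stratification} of $S^3$ in the sense of Definition~\ref{stratification}, and then apply Proposition~\ref{strata} to convert this stratification into a compatible triangulation. Proposition~\ref{strata} yields a triangulation whose color classes are isotopic to the strata, and in dimension three any triangulation is combinatorial; carrying the resulting triangulation of $S^3$ along an ambient isotopy afterwards makes its $3$-color class equal to $L$ on the nose. So the whole task reduces to producing the stratification.

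To build it, I would take a Seifert surface $F$ for $L$, a compact orientable surface embedded in $S^3$ with $\partial F = L$. Because $F$ is two-sided in the orientable manifold $S^3$, a closed tubular neighborhood has the product form $N(F) \cong F \times [-1,1]$ with $F = F \times \{0\}$, and $\partial N(F) = (F\times\{1\}) \cup (F\times\{-1\}) \cup (\partial F \times [-1,1])$, the last piece an annular ``rim'' on which $L = \partial F \times \{0\}$ sits. Now set $R_1 = F \times [0,1]$, $R_2 = F \times [-1,0]$, and $R_3 = \overline{S^3 \setminus N(F)}$, assigned colors $1,2,3$ respectively. These closed codimension-$0$ pieces cover $S^3$, and a direct computation gives $R_1 \cap R_2 = F \times \{0\}$, $R_1 \cap R_3 = (F \times \{1\}) \cup (\partial F \times [0,1])$, and $R_2 \cap R_3 = (F \times \{-1\}) \cup (\partial F \times [-1,0])$ --- each a copy of $F$ (after rounding one crease) with boundary $L$ --- while $R_1 \cap R_2 \cap R_3 = \partial F \times \{0\} = L$. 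In particular $L$ is the entire $3$-color class, with no additional components and no triple intersections in the interior.

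What remains is the local normal structure required by Definition~\ref{stratification}(2). Near a point of $L$ on the rim, the normal $2$-disk to $L$ in $S^3$ is the union of the half-disk lying inside $N(F)$, which is cut by $\{t=0\}$ into an $R_1$-sector ($t>0$) and an $R_2$-sector ($t<0$), together with the complementary half-disk, which lies entirely in $R_3$; this is precisely a partition of a $2$-disk into three sectors meeting along three arcs at $L$, diffeomorphic to $L \times \Delta^2$ with the three vertex slices in $R_1, R_2, R_3$, which is exactly the local model of a Voronoi $3$-coloring. The product-neighborhood condition along the two-colored walls $F$, $R_1 \cap R_3$, $R_2 \cap R_3$ is checked similarly and is easier. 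After smoothing corners this gives a bona fide $3$-colored stratification, and invoking Proposition~\ref{strata} completes the argument. The only genuine subtlety is the bookkeeping for the local picture near the fold $L = \partial F$ of the thickened Seifert surface --- confirming that the three regions really meet along $L$ in the required ``triple wall'' pattern and that no stray triple points appear elsewhere --- while the covering of $S^3$, the global intersection computations, the corner smoothing, and the passage to a triangulation are all routine.
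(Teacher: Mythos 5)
Your proposal is correct and is essentially the paper's own argument: the paper takes three parallel copies of a Seifert surface, which cut the link complement into the same three regions $R_1,R_2,R_3$ you obtain by splitting the thickened Seifert surface $F\times[-1,1]$ in half and taking the exterior, and then likewise invokes Proposition~\ref{strata} to pass from the stratification to a compatible triangulation. Your version merely makes explicit the local ``three sectors of a normal disk along $L$'' verification that the paper leaves implicit.
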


\begin{proof}
Take three parallel copies of a Seifert surface spanning the link. These three surfaces separate the link-complement into 3 regions, each of which is assigned a different color. This defines a $3$-colored stratification, and Proposition~\ref{strata} constructs a compatible triangulation. The original link is isotopic to the $3$-color class of this triangulation.
\end{proof}

Proposition~\ref{tiling} in Section~\ref{genus4} gives an efficient alternate way to construct such 3-colored 3-spheres, based on a planar diagram of the link. That construction realizes the link in a 3-colored~$S^3$ whose vertices lie on the boundary of a 4-dimensional box in~$\mathbb{Z}^4$. 

It follows from Proposition~\ref{link} that a 3-coloring exists in which the intersection of any 2 colors realizes the knot genus. An extension of this procedure 
can be used to generate a surface in $B^4$ realizing the 4-ball genus of a knot or link.

\begin{proposition} \label{3color4ball}
The previous construction of a  3-coloring of a triangulation of $S^3$ giving a chosen link $L$ can be extended from $S^3 = \partial B^4$ to $B^4$ so that the intersection of all three colors in $B^4$ realizes the 4-ball genus of the link.
\end{proposition}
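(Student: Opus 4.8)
The plan is to run the proof of Proposition~\ref{link} one dimension up: replace the Seifert surface in $S^3$ by a minimal-genus spanning surface $F$ in $B^4$, replace the three parallel copies of the Seifert surface by three parallel copies of a codimension-one $3$-manifold $W \subset B^4$ whose frontier contains $F$, and assemble these into a $3$-colored stratification of $B^4$ (in the sense of Definition~\ref{stratification}, allowing boundary on $\partial B^4$) that restricts on $\partial B^4 = S^3$ to the stratification produced by Proposition~\ref{link}, and whose $3$-color class is exactly $F$. Then Proposition~\ref{strata}, applied relative to the already-triangulated boundary, upgrades this to a triangulation. Concretely: fix a smooth, properly embedded, orientable surface $F \subset B^4$ with $\partial F = L \subset S^3$ and $\mathrm{genus}(F)$ equal to the $4$-ball genus $g_4(L)$, arranged to meet a collar $S^3 \times [0,1)$ of $\partial B^4$ in $L \times [0,1)$. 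Recall that in Proposition~\ref{link} the three regions of the $S^3$-stratification are cut out by three parallel copies $\Sigma_{12},\Sigma_{13},\Sigma_{23}$ of a Seifert surface of $L$ (these being its three $2$-color classes), the $3$-color class is $L$ itself, and near $L$ the local picture is the Voronoi-cell decomposition of a tubular neighborhood $L \times \Delta^2$, with $\Sigma_{ij} = L \times (\text{wall } ij)$.

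The key step is to produce a tubular neighborhood of $F$ with the right product structure. Since $\nu(F)$ is an oriented rank-$2$ bundle over a surface with nonempty boundary, hence over a space homotopy equivalent to a wedge of circles, it is trivial; the issue is to choose a trivialization $\nu(F) \cong F \times \Delta^2$ whose restriction to $\partial F = L$ agrees with the framing of $\nu(L \subset S^3) = \nu(F)|_{\partial F}$ determined by the three walls of the $S^3$-stratification near $L$. Because those walls are parallel copies of a Seifert surface, that boundary framing is the Seifert ($0$-)framing. The obstruction to extending it over $F$ is the relative normal Euler number $e \in H^2(F,\partial F;\mathbb{Z}) \cong \mathbb{Z}$, which equals the algebraic self-intersection of $F$ with its pushoff $F'$ along the Seifert framing; capping $F'$ with a Seifert surface of $L$ in $S^3$ (disjoint from $L$, hence from $F$) gives a closed surface that is null-homologous in $B^4$ since $H_2(B^4)=0$, so $e = F \cdot F' = 0$. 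Thus $\nu(F) \cong F \times \Delta^2$ compatibly with the $S^3$-stratification along $\partial F$.

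Away from the tube, set $V = B^4 \setminus \mathrm{int}\,(F \times \Delta^2)$. The $S^3$-construction furnishes a map $f\colon S^3 \setminus \mathrm{int}\,\nu(L) \to S^1$ with $f^{-1}(\mathrm{pt}) = \Sigma$, representing the generator of $H^1(S^3 \setminus \nu(L);\mathbb{Z})$ that sends each meridian of $L$ to $1$. Since $F$ is connected, $H^1(V;\mathbb{Z}) \cong \mathbb{Z}$ is generated by the class sending the meridian of $F$ to $1$, and its restriction to $H^1(S^3 \setminus \nu(L))$ is exactly $[f]$; hence $f$ extends to $g\colon V \to S^1$, which we may further homotope to be the projection to $S^1 \cong \partial\Delta^2/(\text{walls})$ on $F \times \partial\Delta^2 \subset \partial V$. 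Choosing three closed arcs $A_1,A_2,A_3$ covering $S^1$ with disjoint interiors, the sets $g^{-1}(A_i)$ are codimension-$0$ submanifolds of $V$ meeting pairwise along the $3$-manifolds $g^{-1}(A_i \cap A_j)$, all of which limit onto $F$; gluing these to the Voronoi cells of $F \times \Delta^2$, with which they agree on $F \times \partial\Delta^2$, yields closed codimension-$0$ regions $R_1,R_2,R_3$ decomposing $B^4$. This is a $3$-colored stratification: the $2$-color classes $R_i \cap R_j$ are the properly embedded $3$-manifolds $g^{-1}(A_i \cap A_j)$ capped by $F \times (\text{wall } ij)$, and the $3$-color class $R_1 \cap R_2 \cap R_3$ is $F \times (\text{barycenter}) = F$; moreover it restricts on $\partial B^4$ to precisely the Proposition~\ref{link} stratification of $S^3$.

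Finally, apply Proposition~\ref{strata} to this stratification of $B^4$. Its proof constructs the triangulation inductively outward from the highest-codimension strata, so one may begin from the given triangulation of the strata of $\partial B^4$ and obtain a $3$-colored triangulation of $B^4$ extending the given triangulated, $3$-colored $S^3$; by the same proposition the $3$-color class of this triangulation is isotopic to $F$, a smooth surface in $B^4$ with boundary $L$ and genus $g_4(L)$. As any smooth surface in $B^4$ spanning $L$ has genus at least $g_4(L)$, the intersection of the three colors realizes the $4$-ball genus. I expect the main obstacle to be the tubular-neighborhood step: making the product structure $\nu(F) \cong F \times \Delta^2$ match, along $\partial F = L$, the fin structure already present in the $S^3$-coloring, i.e. the vanishing of the relative normal Euler number with respect to the Seifert framing. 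A secondary, routine point is verifying that Proposition~\ref{strata} can be carried out relative to a prescribed triangulation of the boundary.
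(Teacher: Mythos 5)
Your proof is correct in outline but takes a genuinely different route from the paper's at the key technical step: producing the three codimension-one $3$-manifolds that meet exactly along $F$ and cut $B^4$ into three regions. The paper forms the three closed surfaces $T_i = S_i \cup F$, realizes each as the boundary of a properly embedded $3$-manifold $M_i$ by Thom's theorem, uses the zero-framing of $F$ (citing Kervaire and Kirk) to make the $M_i$ coincide with $F$ near $F$, and then makes their interiors disjoint either by a homological cut-and-paste argument or by taking area minimizers. You instead build all three $3$-manifolds at once as preimages $g^{-1}(A_i \cap A_j)$ of a single map $g\colon V \to S^1$ extending the boundary data, so disjointness is automatic; the price is that the framing issue migrates into choosing the trivialization $\nu(F) \cong F \times \Delta^2$, which you correctly reduce to the vanishing of the relative normal Euler number with respect to the Seifert framing (the same fact the paper cites). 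The one step you should tighten is the extension of $f$ over $V$: the boundary of $V$ is not just $S^3 \setminus \mathrm{int}\,\nu(L)$ but also contains the tube wall $F \times \partial\Delta^2$, and your cohomology argument only guarantees that the extension $g$ restricts on $F \times \partial\Delta^2$ to a class of the form $(c,1) \in H^1(F) \oplus H^1(S^1)$; to "homotope $g$ to the projection" there you need $c = 0$, which is not automatic for the trivialization you fixed first. The fix is standard: since $g = f$ near $\partial F$ and $f$ evaluates to zero on Seifert-framed longitudes, $c$ restricts trivially to $\partial F$, so it lifts to $H^1(F,\partial F)$ and can be absorbed by re-twisting the trivialization of $\nu(F)$ rel $\partial F$. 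With that adjustment (and the routine relative version of Proposition~\ref{strata}, which the paper also leaves implicit), your argument goes through and is arguably cleaner than the cut-and-paste in the paper's first proof, being closer in spirit to the paper's second, minimization-based proof in that disjointness comes for free.
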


\begin{proof}
We give two proofs, one combinatorial and one based on area minimization.

Proof 1: Start with a link $L$ in $S^3 = \partial B^4$. As before take any three disjoint Seifert surfaces, $S_1, S_2, S_3$ with $\partial S_i = L$.  We can choose all three to be parallel in $S^3$.  Let $F \subset B^4$ be a minimal genus oriented surface in the 4-ball with $\partial F = L$, so that $F$ realizes the 4-ball genus of $L$. We form three closed surfaces in $B^4$:  $T_i = S_i \cup F, i = 1,2,3.$ The intersection of any two of these three surfaces is their common subsurface $F$.  Since $H_2(B^4, \partial B^4) = 1$, we can find three 3-dimensional manifolds with
boundary $M_i$, properly embedded in $B^4$, with  $\partial M_i = T_i$ ~\cite{thom1954quelques}. 
Each of the $M_i$ has zero framing along $F$, as shown in \cite{kervaire1965higher, kirk1991generalized}, so we can isotope them in a neighborhood of $F$ so that their intersection with this neighborhood coincides with $F$.
It remains to  show that the $M_i$  can be chosen to have disjoint interiors. 

Suppose that $M_1 \cap M_2 \ne F$. Applying transversality, the intersection can be assumed after a slight perturbation to be the disjoint union of $F$ and a finite collection of closed surfaces.
The union of $M_1$ and $M_2$, with appropriate orientation, represents a null-homologous cycle in $H_3(B^4, \partial B^4)$. If $M_i \cap M_j \ne \emptyset$ then there is a 4-dimensional
submanifold $X \subset B^4$  with $\partial X \subset M_i \cup M_j$ and int$(X)$ disjoint from $F$.  Replacing  $\partial X \cap M_1 $ with  $\partial X\cap M_2$ and perturbing slightly produces new 3-dimensional submanifolds, homologous to $M_1$ and~$M_2$, that intersect along $F$ and possibly other surfaces. The intersection of these new 3-manifolds contains fewer  surface components than did $M_1 \cap M_2$. Moreover this process can be done leaving $M_2$ unchanged, only replacing submanifolds of $M_1$ with submanifolds of $M_2$, and only perturbing $M_1$. Continuing in this way we obtain new 3-manifolds $M_1'$ and $M_2$ intersecting only along $F$. 
We then repeat the process with $M_2$ and $M_3$, obtaining  3-manifolds $M_3'$ and $M_2$ intersecting only along $F$. 
Now the intersection of $M_2$ with each of $M_1'$ and $M_3'$ is precisely $F$.  A final application of the process to $M_1'$ and $M_3'$ results in all three 3-manifolds being disjoint away from their common intersection along $F$.

We have found three disjoint 3-manifolds intersecting along $F$ that divide the 4-ball into three regions that satisfy the conditions of Proposition~\ref{strata}. We  apply this proposition to triangulate the 4-ball and color the vertices so as to obtain the surface $F$ as the sole component of the 3-color class.  Thus the 3-color class realizes the 4-ball genus.

Proof 2: In this second approach we use area minimization properties rather than cut and paste arguments to make the $M_i$ disjoint. Take each $M_i$ to be an area minimizing 3-manifold in $B^4$ among all homologous 3-manifolds sharing the same boundary.  Such minimizers exist, with each $M_i$ a properly embedded submanifold and any pair  $M_i, M_j $  are either disjoint except where their boundaries intersect along $F$ or exactly coincident \cite{morgan2016geometric}.  Since  the surfaces $S_i \subset \partial M_i$ are distinct, the 3-manifolds must also be distinct.
We now apply Proposition~\ref{strata} as before.
\end{proof}

This argument applies in more general settings.
For example, a knotted sphere  $K^{d-2} \subset S^{d}$ has a trivial 2-dimensional normal bundle  \cite{kervaire1965higher}. It can be realized by a 3-coloring of $S^{d}$ that results in three ${d-1}$-dimensional Seifert surfaces in $S^d$ that span the knot. An identical argument to the first proof of Proposition~\ref{3color4ball} yields the following.

\begin{proposition}
\label{seifertd}
Suppose  that $K^{d-2}$ is a  $( d-2)$-dimensional knot or link in $S^d = \partial B^{d+1}$ that bounds a manifold $F^{d-1} \subset B^{d+1}$.  Then there is a subdivision and coloring of $B^{d+1}$ whose
3-color class is isotopic to $F$.
\end{proposition}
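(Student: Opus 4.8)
The plan is to transcribe the first (combinatorial) proof of Proposition~\ref{3color4ball} verbatim, with the pair $(B^4,S^3)$ replaced by $(B^{d+1},S^d)$ and the dimension of every submanifold in that argument raised by $d-3$. Thus the three Seifert surfaces become $(d-1)$-dimensional Seifert hypersurfaces $S_1,S_2,S_3\subset S^d$ with $\partial S_i=K$; the role of $F$ is played by the given spanning manifold $F^{d-1}$; the closed surfaces become closed $(d-1)$-manifolds $T_i=S_i\cup_K F$; and the spanning $3$-manifolds become properly embedded $d$-manifolds $M_i\subset B^{d+1}$ with $\partial M_i=T_i$. Since the $3$-colour class of a $3$-colouring of a $(d+1)$-manifold has dimension $(d+1)-3+1=d-1$, the target dimension is exactly $\dim F$.

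First I would produce the three disjoint hypersurfaces. By Kervaire's theorem~\cite{kervaire1965higher}, cited just before the statement, $K^{d-2}\subset S^d$ has trivial rank-$2$ normal bundle, so a tubular neighbourhood is $K\times D^2$; using this trivialisation together with the fact that $K$ is null-homologous in $S^d$ (automatic here, e.g. mod $2$, and integrally when $K$ is oriented), one gets a map $S^d\setminus K\to S^1$ whose preimages of three regular values, with $K$ adjoined, are disjoint hypersurfaces $S_1,S_2,S_3$ agreeing inside $K\times D^2$ and satisfying $\partial S_i=K$. Closing $F$ off with each $S_i$ gives the $T_i$ with $T_i\cap T_j=F$ for $i\neq j$. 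Next, each $T_i$ is a closed codimension-$2$ submanifold of $B^{d+1}$, and the relevant relative homology groups $H_{d-1}(B^{d+1},\partial B^{d+1})$ and $H_d(B^{d+1},\partial B^{d+1})$ vanish by Lefschetz duality, exactly as $H_2(B^4,\partial B^4)$ did; so Thom's realisation theorem~\cite{thom1954quelques} furnishes a properly embedded $d$-manifold $M_i\subset B^{d+1}$ with $\partial M_i=T_i$. As in~\cite{kervaire1965higher,kirk1991generalized}, each $M_i$ meets $F$ with the canonical (zero) framing in the trivial rank-$2$ normal bundle of $F$, so after an isotopy supported near $F$ we may assume $M_i\cap N(F)$ is $F$ times one of three rays in $D^2$ that meet only at the origin, inside a tubular neighbourhood $N(F)\cong F\times D^2$.

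Then I would run the identical cut-and-paste induction: if $\mathrm{int}(M_1)\cap\mathrm{int}(M_2)$ strictly contains $F$, transversality makes the excess a finite union of closed $(d-1)$-manifolds, $M_1\cup M_2$ bounds a $(d+1)$-submanifold $X$ with interior disjoint from $F$ (using $H_d(B^{d+1},\partial B^{d+1})=0$), and replacing $\partial X\cap M_1$ by $\partial X\cap M_2$ and perturbing reduces the number of excess components while altering only $M_1$; iterating, then repeating for the pairs $(M_2,M_3)$ and $(M_1',M_3')$, yields three $d$-manifolds pairwise disjoint away from their common $F$. These partition $B^{d+1}$ into three codimension-$0$ regions whose pairwise and triple intersections, together with the product structure $N(F)\cong F\times\Delta^2$ around $F$, exhibit a $3$-coloured stratification of $B^{d+1}$ in the sense of Definition~\ref{stratification}, with $F$ the unique component of the $3$-colour class and $\partial F=K\subset\partial B^{d+1}$. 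Proposition~\ref{strata} then gives a subdivision and vertex colouring of $B^{d+1}$ whose $3$-colour class is isotopic to $F$.

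The only genuine work, precisely as in dimension $3$, is the disjointification of the $M_i$: one must confirm that the homology obstructions to bounding really vanish in this range (including the precise meaning of "$[T_i]=0$" for a closed manifold that is partly on $\partial B^{d+1}$), that the zero framing along $F$ allows the three sheets to be straightened simultaneously into $F\times\Delta^2$ consistently with the $S_i$ on $\partial B^{d+1}$, and that the iterated cut-and-paste terminates without reintroducing intersections among pairs already made disjoint. Everything else is a dimension-shifted transcription of the proof of Proposition~\ref{3color4ball}.
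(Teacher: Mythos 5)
Your proposal is correct and is precisely the paper's intended argument: the paper proves Proposition~\ref{seifertd} by remarking that $K^{d-2}$ has trivial $2$-dimensional normal bundle by~\cite{kervaire1965higher}, so three parallel Seifert hypersurfaces exist, and that ``an identical argument to the first proof of Proposition~\ref{3color4ball}'' (Thom realisation, zero framing along $F$, cut-and-paste disjointification, then Proposition~\ref{strata}) carries over with the dimensions shifted. Your transcription, including the points you flag as the genuine work, matches this exactly and in fact spells out more detail than the paper does.
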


\section{Random Submanifolds}
\label{random}

In this section we examine the resulting color classes occurring when  coloring the vertices of a triangulation at random. The discrete nature of this construction of random submanifolds makes some of their topological and geometric properties accessible both for theoretical investigation and practical computations.

\medskip

To define the random model, consider a $d$-dimensional triangulated manifold~$M$. As shown in Lemma~\ref{manifold_in_M}, the all-color class in a $k$-coloring of~$M$ is a proper submanifold of codimension~$k-1$. Therefore, for any desired dimension $m < d$, we can color the vertices of $M$ with $k=d-m+1$ colors, $\mathcal{C} = \{C_0,C_1,\dots,C_{d-m}\}$,
and obtain
a random $m$-dimensional proper submanifold of~$M$. This yields a probability distribution over various topological types of $m$-dimensional manifolds, and moreover, over the isotopy classes of their embeddings in~$M$. Many different submanifolds arise in this model, as discussed in Section~\ref{universality}.

The simplest strategy for generating a random coloring is to assign to every vertex a uniformly random color from~$\mathcal{C}$, independently of the other vertices. We may also use a nonuniform probability vector $(p_0,p_1,\dots,p_{d-m})$ as a distribution on~$\mathcal{C}$. More involved models may make the probability vector location-dependent, or introduce dependencies between nearby vertices. Another possibility is a stochastic process where the coloring is dynamically changing. 

If the ambient manifold has a boundary~$\partial M$, then we can impose a coloring on the boundary vertices that sets special conditions there. Such boundary conditions may be used to control the boundary of the all-colors submanifold, or to guarantee a nontrivial or large component.

\medskip

The following definition lists a selection of standard choices for the ambient triangulated manifold~$M$, such that its vertices conveniently lie on an integer grid~$\mathbb{Z}^d$. These constructions have been considered in previous works~\cite{sheffield2014tricolor, de2019random, bobrowski2020homological}, sometimes up to a linear transformation in~$\mathbb{R}^d$, and sometimes equivalently in terms of the dual \emph{permutahedral tessellation} and the coloring of its $d$-dimensional tiles.

\begin{samepage}
\begin{definition}
\label{standard}
Let $d,n \in \mathbb{N}$. 
\begin{itemize}
\itemsep0.5em
\item
The \emph{$d$-ball}, or \emph{$d$-disk}, or \emph{$d$-cube} of order $n$, denoted $B^d_{n}$, is the space $[0,n]^d$, with $n^dd!$ top simplices, where each of the $n^d$ unit cubes is subdivided as in Figure~\ref{braid}.
\item
The \emph{$d$-sphere} of order $n$, denoted $S^d_n$, is the triangulated space obtained from the $(d+1)$-ball by taking the boundary~$\partial B^{d+1}_n$.
\item
The \emph{$d$-space}, denoted $R^d$, is the infinite triangulation of $\mathbb{R}^d$, with vertices $\mathbb{Z}^d$ and each unit cube is triangulated with $d!$ top simplices as in the $d$-ball above.
\item
The \emph{$d$-torus} of order $n$, denoted $T^d_n$, is the quotient of the $d$-space $R^d/n\mathbb{Z}^d$, or equivalently $B_n^d$ with opposite $(d-1)$-faces identified.
\end{itemize}
\end{definition} 
\end{samepage}

The parameter $n$ in the above triangulations of the ball, sphere and torus, allows one to obtain increasingly complex submanifolds of the ambient $d$-dimensional space. Letting $n \to \infty$ we can explore the limiting properties and large-scale behavior of typical random submanifolds.

Given any triangulated manifold~$M$, we would like to consider a similar sequence of refined triangulations, that would yield all achievable submanifolds of~$M$ and capture their asymptotic nature. One solution is to refine a $d$-dimensional manifold~$M$ by iterating the barycentric subdivision $n$ times. This scheme yields arbitrarily small simplices, but the vertex degree explodes, as  each iteration multiplies it by up to~$d!$. The cubing viewpoint suggests an alternative subdivision scheme, as follows. 

\begin{samepage}
\begin{definition}
\label{scheme}
Let $M$ be a $d$-dimensional triangulated manifold. Denote by~$M_n$ the following sequence of refinements of $M$.
\begin{itemize}
\itemsep0.5em
\item Subdivide every $d$-simplex of $M$ into a $d+1$ cubes as described in Section~\ref{construction}.
\item Subdivide every $d$-cube into $n^d$ subcubes of side length $1/n$.
\item Subdivide every small cube into $d!$ simplices using the braid triangulation. 
\end{itemize}
\end{definition}
\end{samepage}
Note that cubical faces shared by several cubes within a simplex are triangulated consistently by this procedure, and also that this triangulation is consistent on restrictions to lower-dimensional faces of a simplex, and therefore may be applied to a whole simplicial complex. In contrast with the iterated barycentric subdivision, Definition~\ref{scheme} gives vertices and faces of bounded valence. This scheme replaces every $d$-dimensional simplex with $n^d(d+1)!$ simplices, in a way that preserves, in some sense, the geometry of the space, without introducing much ``distortion''.

\medskip

Each of the following sections, \ref{knots}, \ref{asymptotic}, and~\ref{genus4} focuses on another important application of our model of random submanifolds, carried out in different settings and contexts.

\section{Random Knots}
\label{knots}

We  now examine  submanifolds obtained by randomly 3-coloring the interior vertices of the  3-cube $B_n^3$, which is the model of random knots introduced by de Crouy-Chanel and Simon~\cite{de2019random}. The coloring of the boundary vertices is fixed using the following rule, which is sometimes called the \emph{beach ball} coloring:
\begin{samepage}
\begin{multicols}{2}
\begin{itemize}
\itemsep0.5em
\item Blue: $x=0$ or $z=n$
\item Green: $z=0$ or $y=n$
\item Red: $y=0$ or $x=n$ 
\end{itemize}
\columnbreak
\begin{center}
\begin{tikzpicture}[scale=1.25,line width=1.5, line cap=round]
\fill[green!50] (-0.5,0.3) -- (-0.5,1.3) -- (0.5,1.3) -- (0.5,0.3) -- cycle;
\fill[green!50] (-0.5,0.3) -- (0.5,0.3) -- (1,0) -- (0,0) -- cycle;
\fill[red!50] (0.5,1.3) -- (0.5,0.3) -- (1,0) -- (1,1) -- cycle;
\draw[green] (0,0) -- (1,0) (1,0) -- (0.5,0.3) -- (-0.5,0.3) (0.5,0.3) -- (0.5,1.3);
\fill[blue!25,opacity=0.75] (0,0) -- (0,1) -- (-0.5,1.3) -- (-0.5,0.3) -- cycle;
\fill[blue!25,opacity=0.75] (0,1) -- (1,1) -- (0.5,1.3) -- (-0.5,1.3) -- cycle;
\fill[red!50,opacity=0.75] (0,0) -- (0,1) -- (1,1) -- (1,0) -- cycle;
\draw[red] (1,0) -- (1,1) -- (0.5,1.3);
\draw[blue] (0,0) -- (-0.5,0.3) -- (-0.5,1.3) -- (0.5,1.3);
\draw[blue] (0,1) -- (-0.5,1.3);
\draw[red] (0,0) -- (0,1) (0,1) -- (1,1); 
\node at (0,0)[circle,fill=green,inner sep=1.2]{};
\node at (0.5,1.3)[circle,fill=green,inner sep=1.2]{};
\end{tikzpicture}
\end{center}
\end{multicols}
\end{samepage}
On the edges where two colors meet, such as red and blue for $(0,0,z)$, one may choose one of the two colors arbitrarily. For concreteness, we use the following rule: blue beats green beats red beats blue. This leaves only the vertices $(0,0,0)$ and $(n,n,n)$ which we color green. These two vertices are contained in two respective boundary triangles whose center points give the 3-color class of the boundary. The random knot is obtained by randomly coloring the interior vertices, taking the 3-color class component that reaches the boundary, and forming a loop by connecting its ends along the boundary of $B_n^3$. Figure~\ref{figure8} shows a coloring of the grid $[0,\dots,4]^3$ with beach ball boundary conditions in which the 3-color curve is a figure-eight knot.

Every knot type arises from some coloring of $B_n^3$ for $n$ large enough. This important property of the random model is not obvious from its definition in~\cite{de2019random}, but follows from our argument in Proposition~\ref{link} above, and more explicitly from the construction in Proposition~\ref{tiling} below. Moreover, these methods show that a given knot can even be realized as the unique component of the 3-color class. When realized as the only component, each of the 2-color classes determines the zero-framing of the knot.

\setcounter{figure}{7}
\begin{figure}
\centering
\vspace{0.5em}
\includegraphics[width=0.6\textwidth]{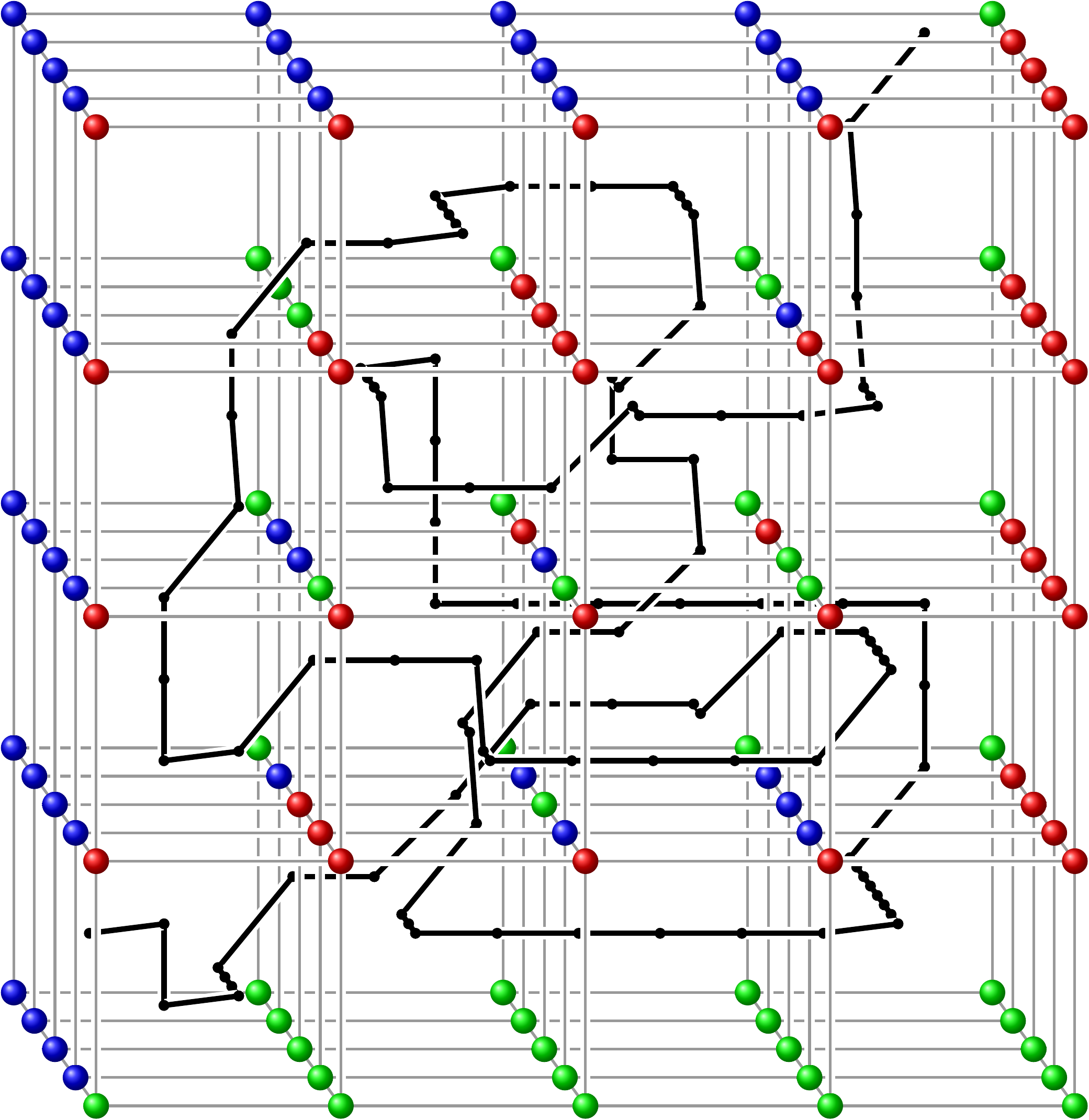}
\vspace{0.5em}
\caption{A coloring of the $5 \times 5 \times 5$ grid that yields a figure-eight knot. The center points (black dots) of 3-colored triangles are connected (black segments) inside common tetrahedra. For clarity, the 1-skeleton of the triangulation (gray lines) is shown with diagonal edges omitted. See the six 3-simplices in each small cube in Figure~\ref{braid}. Here, the vertex $(0,0,0)$ is the green one at the bottom left. This knotted 3-color curve is one of six nontrivial results out of $10^{10}$ random colorings.
}
\label{figure8}
\end{figure}

De Crouy-Chanel and Simon have conducted an experimental investigation of the resulting knots, and have posed several conjectures. Their experiments indicate that for the random knots that appear in a large three-dimensional cube, the frequency of the unknot goes to zero as the size of the cube increases \cite[Table~1]{de2019random}. They note that it ``would be very interesting to prove these behaviors rigorously and have exact values for the scaling exponents.'' 

This basic property that most generated knots are knotted, experimentally observed by De Crouy-Chanel and Simon in this knot model, is known in general as the Debruck--Frisch--Wasserman conjecture. It was considered already around 1960, for knots that are generated by polygonal random walk in $\mathbb{R}^3$~\cite{delbruck1961knotting, frisch1961chemical}, and has been proven true in those models~\cite{sumners1988knots, pippenger1989knots, soteros1992entanglement, diao1994random, diao1995knotting}. Later, the analogous conjecture was verified by various methods in other models of random knots, such as random planar diagrams \cite{cantarella2016knot,chapman2016asymptotic2}, random petal diagrams \cite{even2016invariants,even2018distribution}, and random grid diagrams~\cite{witte2019link}. 

Here we verify the Debruck--Frisch--Wasserman conjecture for this model of random knots, resolving with a positive answer the question raised by de Crouy-Chanel and Simon. Our proof shows that a figure-eight connected summand is present in the relevant component of the 3-color class with high probability. Moreover, we show that the decay of the unknot probability is exponential in~$n$.

\begin{theorem}
\label{dfw}
The probability of the unknot in the de Crouy-Chanel--Simon model of random knots is~$\exp(-\Theta(n))$. 
\end{theorem}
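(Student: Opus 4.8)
The beach ball boundary condition forces the boundary of the $3$-color class to be exactly two points, $p_1$ near $(0,0,0)$ and $p_2$ near $(n,n,n)$. Since a compact $1$-manifold with exactly two endpoints has a unique arc component, the $3$-color class consists of one properly embedded arc $\alpha$ from $p_1$ to $p_2$ together with a disjoint union of circles; only $\alpha$ is closed up, so the random knot is the isotopy class of $\alpha \cup (\text{boundary arc})$, and the other components are irrelevant. Note also that $\alpha$ must cross every level set $\{x=t\}$, $0<t<n$, because it runs between nearly antipodal corners.

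\textbf{Lower bound} $\Pr[\text{unknot}] \ge \exp(-C_1 n)$. The plan is to prescribe the coloring only on an $O(n)$-vertex set and leave the rest free. Fix a monotone lattice path $\gamma$ from $(0,0,0)$ to $(n,n,n)$ chosen so that, closed up along $\partial B_n^3$, it is manifestly unknotted, and let $T$ be a constant-radius tube around $\gamma$. Adapting the realization procedure of Proposition~\ref{link} (more explicitly Proposition~\ref{tiling}), but performed only inside $T$, we specify a coloring on $T$ whose induced $3$-color class inside $T$ is a single unknotted staircase arc $\alpha_0$ with endpoints $p_1$ and $p_2$; we also color a bounded-thickness monochromatic shell around $T$ so that this shell contains no $3$-color class in its middle layers, hence separates $\alpha_0$ from any $3$-color class arising outside. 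Then, whatever the colors of the remaining (vast majority of) vertices, the $3$-color class restricted to $T\cup(\text{shell})$ is exactly $\alpha_0$; since components of the global $1$-manifold cannot merge, $\alpha_0$ is a full component and, containing $p_1,p_2$, it is the relevant arc. As $|T|+|\text{shell}|=O(n)$, the event that the random coloring agrees with our prescription there has probability $\ge 3^{-O(n)}=\exp(-\Theta(n))$, and on this event the knot is the unknot.

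\textbf{Upper bound} $\Pr[\text{unknot}]\le \exp(-C_2 n)$. Slice $B_n^3$ into $m=\Theta(n)$ slabs of bounded thickness perpendicular to a coordinate axis; by the preliminary observation the relevant arc $\alpha$ enters each slab. The heart of the argument is a \emph{local gadget lemma}: for a fixed slab $Y_j$, conditioned on the entire coloring outside $Y_j$ (which determines how $\alpha$ meets $Y_j$), the probability, over the colors of the vertices \emph{inside} $Y_j$, that a figure-eight knot is tied off as a connected summand of $\alpha$ inside a ball contained in $Y_j$ is at least some constant $c>0$. Granting this, the events for different slabs depend on disjoint vertex sets, hence are independent, so the probability that no figure-eight summand is produced in any slab is at most $(1-c)^m=\exp(-\Theta(n))$. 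Since the figure-eight knot is nontrivial and detected, e.g., by the Alexander polynomial, this forces the random knot to be knotted off an exponentially small event; in fact the same argument yields $\Omega(n)$ prime summands with high probability, recovering Proposition~\ref{composite}. Combined with the lower bound, $\Pr[\text{unknot}]=\exp(-\Theta(n))$ as claimed in Theorem~\ref{dfw}.

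\textbf{Main obstacle.} Everything hinges on the local gadget lemma, and the sharp exponent $\Theta(n)$ forces the slabs to have \emph{bounded} thickness, which is exactly where it bites. Two points must be handled carefully: (a) $\alpha$ may cross a bounded slab several times and the slab may also carry stray $3$-color circles, so one must first secure, with constant conditional probability, a \emph{clean window} in which $\alpha$ passes through a small ball $B\subset Y_j$ as one controlled sub-arc with nothing else of the $3$-color class inside $B$; and (b) one must exhibit an embedded $2$-sphere, supported near $B$, separating the locally recolored figure-eight tangle from the rest of $\alpha$, so that it is a genuine connected summand rather than something cancelled or absorbed by the global embedding. Both require a close analysis of the combinatorics of the Voronoi $3$-coloring along a slab boundary together with a surgery/connected-sum argument, in the spirit of the treatment of lattice random walks by Diao, Pippenger, Sumners and others.
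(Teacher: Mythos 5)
Your overall strategy matches the paper's: a lower bound obtained by prescribing $O(n)$ vertices that force an unknotted curve (the paper does this more economically, coloring blue just the $3n-5$ vertices along the staircase from $(1,1,1)$ to $(n-1,n-1,n-1)$, which together with the fixed boundary coloring confines the curve to a path of unit cubes along an edge of $B_n^3$), and an upper bound obtained by arranging $\Theta(n)$ constant-probability opportunities to tie off a figure-eight summand. Your lower bound sketch is fine. The upper bound, however, rests entirely on the ``local gadget lemma,'' which you state but do not prove and which you yourself flag as the main obstacle; as written this is a genuine gap, and the two difficulties you list --- (a) securing a clean window inside a bounded-thickness slab that the arc may cross many times, and (b) exhibiting a separating sphere certifying a genuine connected summand --- are exactly the points where the slab decomposition is hard to push through. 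A further problem is the independence claim: whether a figure-eight is tied off ``as a connected summand of $\alpha$'' is not a function of the vertices inside $Y_j$ alone (nor is the way $\alpha$ meets $Y_j$ determined by the coloring outside $Y_j$), so the product bound does not follow from disjointness of vertex sets.

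The paper closes this gap with a different decomposition that dissolves both (a) and (b). Instead of slabs, it conditions on the coloring of a nested subcube $B_m$ and looks at the first 3-colored triangle through which the curve exits $B_m$ on a far face. The entire complement of $B_m$ is still unconditioned, so one may demand a completely specified $5\times5\times5$ block in $B_{m+5}\setminus B_m$, joined to the exit triangle by a short skew prism of three tetrahedra whose side faces carry only two colors. The block carries the explicit figure-eight coloring of Figure~\ref{figure8} \emph{including its beach-ball boundary}, so the boundary sphere of the block meets the 3-color class in exactly two points: that sphere is your separating sphere for free, the curve cannot re-enter the block, and no ``clean window'' is needed because the gadget sits entirely outside the conditioned region at a location determined by that region. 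Since the conditional probability of this configuration given any coloring of $B_m$ is exactly $3^{-125}$, one gets $P[A_{m+5}\mid A_m]\le 1-3^{-125}$ for the monotone events $A_m$ (``some extension of the coloring of $B_m$ yields an unknot''), and iterating this conditional bound --- rather than invoking independence --- yields the exponential decay. To repair your argument you would need to actually prove the gadget lemma, and the cleanest route is essentially to replace the slabs by this nested-cube, first-exit formulation.
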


\begin{proof}
Let $n \in \mathbb{N}$, and denote by $A$ the event that the random knot generated by this model in the 3-dimensional cube $B_n$ is trivial. 

To derive a lower bound on $P[A]$, we consider a specific unknotted curve. The interface between red and green boundary vertices goes along three edges of the large cube $B_n^3$, from $(0,0,0)$ to $(n,0,0)$ to $(n,n,0)$ to $(n,n,n)$. We consider the following set of interior vertices in the neighborhood of these edges:
$$ \{(1,1,1),(2,1,1),\dots,(n-1,1,1),\dots,(n-1,n-1,1),\dots,(n-1,n-1,n-1) \}$$
Let $A'$ denote the event that all the $3n-5$ vertices in this set turn out blue. This event determines the vertex-coloring in a path of unit cubes along the boundary from $(0,0,0)$ to $(n,n,n)$, such that each cube has two 3-colored triangles, shared with the previous one and the next one. The 3-color curve within these cubes is unknotted and parallel to the boundary. Hence, given $A'$, the resulting knot is necessarily the unknot. In other words, the event $A'$ is included in~$A$. 
In conclusion, with $r = 1/3^3$,
$$ P[A] \;\geq\; P[A'] \;=\; 3^{-3n+5} \;=\; \Omega(r^n) $$

On the other hand, we derive an upper bound on $P[A]$ that decays exponentially too. Consider $B_m = [0,m]^3$ as a subcube of~$B_n$ with the induced triangulation. The random coloring $c:\{0,1,\dots,n\}^3 \to \{red,green,blue\}$ restricts to a coloring of the vertices $\{0,1,\dots,m\}^3$. For every $m < n$, denote by $A_m$ the event that there exists a coloring of~$B_n$ that agrees with the restriction of~$c$ to~$B_m$ and gives an unknot. Note that $A_m$ only depend on the coloring of~$B_m$, and $A \subseteq A_m$ by taking the original coloring before the restriction, hence $P[A] \leq P[A_m]$.

Fix a coloring of $B_m$ for $m < n-5$. By the boundary coloring, the 3-colored curve that enters $B_m$ near $(0,0,0)$ must exit through one of the opposite faces of~$B_m$ where either $x$,~$y$, or $z$ are equal to $m$. Without loss of generality, for some $0 \leq i,j < m$ it passes through the middle of a 3-colored triangle of the following form:
$$ (m,i,j),\, (m,i,j+1),\, (m,i+1,j+1) $$ 
Indeed, the opposite sides of $B_m$ are triangulated by triangles of this form up to interchanging the roles of the three axes. Consider the event that the triangle shifted by $(1,1,1)$ also admits respectively the same three different colors:
$$ (m+1,i+1,j+1),\, (m+1,i+1,j+2),\, (m+1,i+2,j+2) $$ 
These two parallel triangles may be viewed as the bottom and the top of a small ``skew prism'', i.e., a triangle cross an interval. It is made of 3 tetrahedra from the triangulation of~$B_n$, and has three quadrilateral side faces, each of which only contains vertices of two different colors. Hence the 3-color curve entering from the bottom triangle must exit from the top one. We use this skew prism as a small connecting part, to take us one step away from the already colored boundary of $B_m$, and of $B_n$ if $i$ or $j=0$.

This shifted triangle may be completed to a whole copy of the $5 \times 5 \times 5$ configuration in Figure~\ref{figure8}. Indeed, the 3-color triangle between (0,0,0), (0,0,1), and (0,1,1) is matched to that triangle, interchanging colors or axes as needed. The opposite vertex (4,4,4) will end up at $(m+5,i+5,j+5)$ so that the whole configuration is included in the cube~$B_{m+5}$. In conclusion, at least one coloring of the 125 vertices 
$$ \{m+1,\dots,m+5\} \times \{i+1,\dots,i+5\} \times \{j+1,\dots,j+5\} $$ 
guarantees a figure-eight component in the random curve. This scenario rules out the possibility of an unknot, regardless of how the curve continues. It might happen independently for any fixed coloring of~$B_m$, and thus regardless of conditioning on~$A_m$. Therefore, the conditional probability $P[A_{m+5} | A_{m}]$ is bounded by $1-1/{3^{125}}$. Since by definition $A_{m+5} \subseteq A_m$, this implies for every $m < n-5$,
$$ P[A_{m+5}] \;\leq\; \left(1-\frac{1}{3^{125}}\right) P[A_{m}]$$
Starting with $P[A_0]=1$ and iterating $\left\lfloor(n-1)/5\right\rfloor$ times,
$$ P[A] \;\leq\; P\left[A_{5\left\lfloor\frac{n-1}{5}\right\rfloor}\right] \;\leq\; \left(1-\frac{1}{3^{125}}\right)^{\left\lfloor\frac{n-1}{5}\right\rfloor} P\left[A_0\right] \;=\; O(R^n) $$
where $R = \sqrt[5]{1-1/3^{125}}<1$ as required.
\end{proof}

\begin{remark}
It is plausible that for some positive $\alpha$ between $r$ and $R$ of the above proof $P[A] = \exp(-(\alpha \pm o(1))n)$. It may be interesting to show that such an exponent exists and to estimate it with a higher precision.
\end{remark}

By observing the empirical distribution of a sample of determinants, de Crouy-Chanel and Simon predicted that ``as $n$ increases the random knots are composite and contain more and more prime knots'' \cite[end of \S2.3]{de2019random}.
The following proposition, extending the arguments of Theorem~\ref{dfw}, gives a mathematical proof of this prediction, as well as additional asymptotic properties of the obtained knots.

\begin{samepage}
\begin{proposition}
\label{composite}
A random knot in the de Crouy-Chanel--Simon model satisfies the following properties with probability tending to one at an exponential rate as~$n \to \infty$.
\begin{enumerate}
\item 
The knot is composite.
\item 
The number of prime connected summands is at least linear in~$n$.
\item 
The knot determinant is at least exponential in~$n$.
\item 
For each fixed knot $K$, the number of copies of~$K$ that appear as connected summands is at least linear in~$n$.
\end{enumerate}
\end{proposition}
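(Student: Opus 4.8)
The plan is to re-run the iteration from the proof of Theorem~\ref{dfw}, but now counting how many forced summands appear rather than only recording whether at least one does. Recall the mechanism there: conditioned on the coloring of the subcube $B_m$, the relevant component of the $3$-color curve crosses a determined $3$-colored boundary triangle of $B_m$, and a prescribed coloring of the $125$ vertices of a $5 \times 5 \times 5$ block just past that triangle (together with a skew-prism connector) forces a figure-eight tangle, so that the curve meets a ball $B^{(m)}$ in an unknotted arc outside and a figure-eight knotted arc inside --- which is exactly what ruled out the unknot. Use the $\ell_n = \lfloor (n-1)/5 \rfloor$ disjoint blocks of that iteration; let $X_i$ be the indicator that block $i$ receives its forcing coloring, and set $Z = X_1 + \dots + X_{\ell_n}$. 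Conditioned on the $\sigma$-algebra $\mathcal{F}_{i-1}$ generated by the coloring of everything preceding block $i$, the block consists of $125$ fresh vertices and $\{X_i = 1\}$ is the event that they receive one specific coloring, so $P[X_i = 1 \mid \mathcal{F}_{i-1}] \ge 3^{-125}$. Therefore $Z$ stochastically dominates a $\mathrm{Binomial}(\ell_n, 3^{-125})$ variable, and a Chernoff bound produces a constant $c_0 > 0$ with $P[Z \ge c_0 n] \ge 1 - \exp(-\Omega(n))$.

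On the event $Z \ge c_0 n$, the balls $B^{(i)}$ over the indices with $X_i = 1$ are pairwise disjoint, so the random knot is a connected sum of $Z$ copies of $4_1$ with some further knot $K'$. Parts (1)--(3) are then immediate. The figure-eight knot is prime, so by the uniqueness of the prime decomposition the number of prime summands is at least $Z \ge c_0 n$, which is~(2); having at least two nontrivial summands, the knot is composite, which is~(1); and since the knot determinant is a positive integer, multiplicative under connected sum, with $\det(4_1) = 5$, it is at least $5^Z \ge 5^{c_0 n}$, which is~(3). In each case the failure probability is $\exp(-\Omega(n))$ by the concentration of $Z$.

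For part~(4), fix a nontrivial knot $K$ (the unknot case being vacuous). Here I would replace the figure-eight block by a bounded ``$K$-plug'': a colored cube $Q_K$ of side length $N_K$ depending only on $K$, with two distinguished boundary triangles such that the $3$-color arc joining them, closed up by a trivial arc outside a ball, is the knot $K$. Such a plug is produced by the universality construction behind Proposition~\ref{link}, concretely by Proposition~\ref{tiling}: realize $K$ as the sole component of the $3$-color class in a colored $S^3$, then excise a ball carrying all the knotting together with a complementary trivial arc. Inserting $Q_K$ through a skew-prism connector at each of order $n/N_K$ disjoint windows, exactly as above, forces a $K$-summand with conditional probability at least $3^{-N_K^3}$ per window; the same Chernoff estimate gives a constant $c_K > 0$ such that, with probability $1 - \exp(-\Omega(n))$, the knot has at least $c_K n$ disjoint $K$-summands, which is~(4).

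The main obstacle is the bounded $K$-plug needed for part~(4): one must extract from Section~\ref{universality} a colored cube of size depending only on $K$ that realizes $K$ as a properly embedded knotted arc with the framing matched so that closing the tangle off outside a ball yields precisely $K$, and then verify that inserting it into an arbitrary ambient $3$-color curve genuinely produces a connected summand isomorphic to $K$. A secondary point requiring care is checking that in the iteration the forcing events in distinct windows are conditionally independent of the past with the stated positive probability, so that the stochastic domination by a binomial, and hence the Chernoff bound, is valid.
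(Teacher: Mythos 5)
Your proposal is correct and follows essentially the same route as the paper: iterate the $5\times5\times5$ figure-eight forcing blocks from Theorem~\ref{dfw}, count successes with a Chernoff bound to get linearly many prime $4_1$ summands (hence compositeness and determinant at least $5^{\Omega(n)}$ by multiplicativity), and replace the block by a bounded configuration realizing an arbitrary $K$ for part~(4). Your stochastic-domination treatment of the window events via conditioning on $\mathcal{F}_{i-1}$ is, if anything, slightly more careful than the paper's bare assertion that the events $E_0, E_5, E_{10}, \dots$ are independent.
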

\end{samepage}

\begin{proof}
Let $E_m$ be the event that wherever the knot first exits the cube~$B_m$ there appears in $B_{m+5} \setminus B_m$ the $5 \times 5 \times 5$ color configuration that guarantees a figure-eight summand, as described in detail in the proof of Theorem~\ref{dfw} and Figure~\ref{figure8}. The events $E_0$, $E_5$, $E_{10}, \dots$ occur independently with probability $p=P[E_m]=1/3^{125}$. Hence the number of figure-eight connected summands is at least~$X_n$, for a binomial random variable $X_n \sim B(n/5,p)$ counting the number of occurring~$E_m$. 

The expectation is $\mathbb{E}[X_n] = np/5$, and by Chernoff's bound $P[X_n \leq np/10] < e^{-np/40}$. Therefore, except for an exponentially small probability, there are at least $np/10$ such figure-eight factors, and the knot determinant is at least~$5^{np/10}$, proving (1), (2) and~(3). Since any knot $K$ can appear in this model, an occurrence of $K$ as a connected summand is similarly guaranteed by some $k \times k \times k$ configuration, and (4) follows for every $K$.

However, the probability of these properties cannot converge to one faster than exponentially, since the unknot occurs with probability as least $3^{-3n}$ as shown above.   
\end{proof}

\medskip

We conclude this section with a discussion of our computer experiments in the regime of small~$n$. For $n=4$, we discovered the construction of Figure~\ref{figure8} by randomly coloring the $3^3=27$ interior vertices and computing the determinant for a quick nontriviality test. Over ${\sim}10^{10}$ trials in this small case, we obtained 6 results with $\det \neq 1$, none of which were equal or symmetric to any other. All of these colorings had $\det=5$ and were identified as the figure-eight knot. This was verified by three knot detection software packages: PyKnotId~0.5.3 \cite{pyknotid}, plCurve~8.0.6 \cite{plcurve}, and SnapPy~2.7 \cite{SnapPy}. The search required roughly $10000$ CPU hours.

\begin{table}[t]
\centering
\addtolength\tabcolsep{-1pt}
\begin{tabular}{c||c||c||c||cc||ccc||ccccccc||c}
$n$ & sample & $3_1$ & $4_1$ & $5_1$ & $5_2$ & $6_1$ & $6_2$ & $6_3$ & $7_1$ & $7_2$ & $7_3$ & $7_4$ & $7_5$ & $7_6$ & $7_7$ & $(\ast)$ \\ \hline \hline
4 & \rule{0pt}{2.6ex}$10^{10}$ & 0 & 6 & 0 & 0 & 0 & 0 & 0 & 0 & 0 & 0 & 0 & 0 & 0 & 0 & 0 \\
5 & $10^{9}$ & 1959 & 541 & 0 & 2 & 1 & 0 & 0 & 0 & 0 & 0 & 0 & 0 & 0 & 0 & 0 \\
6 & $10^{8}$ & 5228 & 1298 & 0 & 16 & 8 & 0 & 0 & 0 & 0 & 0 & 0 & 0 & 0 & 0 & 0 \\
7 & $10^{7}$ & 3301 & 927 & 0 & 34 & 5 & 0 & 0 & 0 & 0 & 0 & 0 & 0 & 0 & 0 & 1 \\
8 & $10^{7}$ & 11462 & 3725 & 3 & 193 & 79 & 4 & 3 & 0 & 2 & 0 & 2 & 0 & 0 & 0 & 5 \\
9 & $10^{7}$ & 28846 & 10307 & 15 & 993 & 416 & 29 & 26 & 0 & 15 & 0 & 5 & 0 & 9 & 11 & 53 \\
10 & $10^{7}$ & 59552 & 24160 & 116 & 2855 & 1317 & 179 & 110 & 0 & 64 & 1 & 28 & 6 & 52 & 39 & 114
\end{tabular}
\vspace{1em}
\caption{Occurrences of nontrivial knots in randomly colored $[0,n]^3$ grids. Curves giving determinant 3 are counted in the column $3_1$, and those with determinant 5 and above are sorted using their Alexander polynomial. The last column $(\ast)$ accounts for curves such as $8_1$ or the connected sum $3_1\#3_1$, whose invariants do not fit any prime knot with up to 7 crossings.}
\label{table}
\end{table}

We similarly generated and identified random knots in $[0,n]^3$ for $n \in \{5,6,\dots,10\}$. This experiment showed that despite the asymptotic result of Theorem~\ref{dfw}, the unknot is still highly dominant, occurring over $99\%$ of the time in these grid sizes, followed by the trefoil knot $3_1$, and then the figure-eight knot $4_1$. There also appeared to be a significant tendency toward the series of twist knots, $4_1, 5_2, 6_1, 7_2, 8_1, \dots$, which seemed considerably more prevalent than their counterparts of the same crossing numbers. See Table~\ref{table}.

The last observation may be relevant to biologists that study and compare different models of knotting for DNA filaments that are extremely confined in small volume, see for example~\cite{arsuaga2005dna,micheletti2008simulations}. The crossing number is often taken by default as a complexity measure for knot types. But more specific knot properties may play an important role depending on particular circumstances such as spatial confinement.

\section{Euler Characteristic and Percolation}
\label{asymptotic}

We now study the properties of random submanifolds in the general setting of Definition~\ref{scheme}. For $k \leq d+1$, a~closed triangulated $d$-manifold~$M$ is randomly $k$-colored with probabilities $(p_1,\dots,p_k)$ independently for each vertex.
As $n \to \infty$,
the subdivision $M_n$ locally resembles the grid~$\mathbb{Z}^d$ except for a vanishingly small fraction of the vertices. What asymptotic topological properties are possessed by the random submanifolds $N_n \subset M_n$ that arise as all-color classes and as other $m$-color classes? 

\subsection{The Expected Euler Characteristic}
\label{eec-section}

We first investigate the distribution of the Euler Characteristic, $\chi(N_n) \in \mathbb{Z}$, which is the most basic and classical topological invariant of a manifold~$N$. The distribution of the Euler Characteristic in random manifolds has been studied in various settings. One example are submanifolds arising from zero sets of random fields, see e.g.~\cite{adler1981geometry, podkorytov1998euler, burgisser2007average, adler2007random,  letendre2016expected}. Our discussion of the expected Euler Characteristic extends one of the combinatorial settings considered by Bobrowski and Skraba~\cite{bobrowski2020homological}. 

\begin{samepage}
\begin{definition}
\label{eec}
Let~$M_n$ be a sequence of triangulated manifolds, and let $\{C_1,\dots,C_k\}$ be a set of $k$ colors. The \emph{expected Euler Characteristic density} of a nonempty subset $\mathcal{C} \subseteq \{C_1,\dots,C_k\}$ is
$$ \mathcal{E}_{\mathcal{C}}(p_1,\dots,p_k) \;=\; \lim_{n \to \infty}\; \frac{\mathbb{E}\left[\chi(N_n)\right]}{ |\text{vertices}(M_n)|} $$
where $N_n \subseteq M_n$ is the color class corresponding to~$\mathcal{C}$ in a random $k$-coloring of the vertices of~$M_n$, independently according to the probability vector $(p_1,\dots,p_k)$.
\end{definition}
\end{samepage}

In \cite{bobrowski2020homological}, Bobrowski and Skraba studied $\mathcal{E}_{C}(p,1-p)$ for a single color~$C$ in a $2$-coloring of the flat torus~$T^d$, across various discrete and continuous models. In our random model the $m$-color classes are well-behaved manifolds for all~$m \leq k$. We hence extend the study of the expected Euler Characteristic density to all $m$-color classes in $k$-colored manifolds, for general $m$ and $k$. 

In the following theorem, $M$~is any fixed triangulated $d$-dimensional manifold. The triangulated manifolds $M_n$ are a sequence of refinements of~$M$ obtained by cubings with length parameter~$1/n$, as in Definitions~\ref{standard}-\ref{scheme}. Their random submanifolds $N_n \subset M_n$ arise as color classes, possibly having non-empty boundary. For the sake of concreteness, one may have in mind the triangulated flat tori~$M_n = T_n^d$. However, the formula does not depend on the topology of~$M$.

\begin{theorem}
\label{ec}
Let $M$ be a $d$-dimensional triangulated closed manifold. Given a choice of colors $\mathcal{C} \subseteq \{C_1,\dots,C_k\}$, the expected Euler Characteristic density of the corresponding $(d-|\mathcal{C}|+1)$-dimensional submanifold $N_n$  in a randomly $k$-colored $M_n$ is
$$ \mathcal{E}_{\mathcal{C}}(p_1,\dots,p_k) \;=\; \sum_{r=|\mathcal{C}|}^{d+1} \genfrac{\{}{\}}{0pt}{}{d+1}{r} \, (r-1)! \; \sum_{X \subseteq\, \mathcal{C}}(-1)^{|X|}\left(-\sum_{C_i \in X}p_i\right)^r \;.$$
Here $\displaystyle \genfrac{\{}{\}}{0pt}{}{d+1}{r} = \sum\limits_{s=1}^r \frac{(-1)^{r-s} s^d}{(s-1)!(r-s)!}$ are the Stirling numbers of the second kind. 
\end{theorem}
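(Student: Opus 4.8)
The plan is to compute the expected Euler characteristic by the standard local-contributions method: since $N_n$ is a subcomplex of the barycentric (cubical) subdivision of $M_n$, and Euler characteristic is additive over cells, I would write $\mathbb{E}[\chi(N_n)] = \sum_{j} (-1)^j \,\mathbb{E}[\#\{j\text{-cells of } N_n\}]$ and then group the cells of $N_n$ according to which cell of $M_n$ they sit inside. By Lemma~\ref{submanifold} (and the cubing discussion preceding it), the part of the color class $N_n$ lying inside a single $d$-simplex $\sigma$ of $M_n$ is determined entirely by the coloring pattern on the $d+1$ vertices of $\sigma$ — specifically by the set partition of those $d+1$ vertices into color-groups, intersected with the condition that all colors of $\mathcal{C}$ appear. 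So the contribution to $\mathbb{E}[\chi(N_n)]$ is a sum over $d$-simplices $\sigma$ of $M_n$, of the probability-weighted average over colorings of $\sigma$'s vertices of a purely combinatorial quantity $\chi_{\mathcal{C}}(\text{pattern})$ depending only on the induced set partition.

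The next step is to see that the count of $d$-simplices of $M_n$ whose vertex-coloring induces a given set partition into $r$ nonempty blocks, of sizes summing to $d+1$, has a clean asymptotic form. Each $d$-simplex of $M_n$ comes from the braid triangulation of a small cube, so its $d+1$ vertices are a monotone lattice path; as $n\to\infty$ almost all such simplices look like a fixed cube-corner simplex with $d+1$ vertices in ``general position,'' and the number of $d$-simplices per vertex of $M_n$ tends to $(d+1)!$ (each small cube contributes $d!$ simplices and one vertex). The probability that a fixed labeled set of $d+1$ iid-colored vertices realizes an (unordered) partition into blocks assigned distinct colors, with exactly $r$ blocks whose colors together cover $\mathcal{C}$, is the elementary-symmetric-type expression; summing over which colors go to which block and using inclusion–exclusion on the ``$\mathcal{C}$ is covered'' condition is what produces the inner sum $\sum_{X\subseteq\mathcal{C}}(-1)^{|X|}(-\sum_{C_i\in X}p_i)^r$ after collecting terms. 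The number of surjections from a $(d+1)$-set onto an $r$-set is $r!\,\genfrac\{\}{0pt}{}{d+1}{r}$, and dividing by the $r!$ orderings of blocks (they're unordered) gives the Stirling number $\genfrac\{\}{0pt}{}{d+1}{r}$; the extra $(r-1)!$ should come from the purely combinatorial value of $\chi_{\mathcal{C}}$ on a generic pattern with $r$ blocks, which I expect to be $(-1)^{\,d-r+1}(r-1)!$ up to the sign absorbed into $(-\sum p_i)^r$ — in other words, the local Euler-characteristic contribution of the color class inside a single simplex with $r$ color-blocks is, after appropriate normalization, exactly $(r-1)!$ times the right sign. I would verify this local value by an independent small computation: inside a $d$-simplex with $r$ distinct colors the all-$r$-color class is a disk (Lemma~\ref{manifold_in_simplex}), and more generally the $\mathcal{C}$-color class inside a simplex with $r$ total colors is a convex polytope (intersection of the hyperplanes equating the $\mathcal{C}$-blocks with the inequalities controlling the others), whose interior/boundary cell counts give a signed contribution; summing these signed contributions over the simplex and its faces via inclusion–exclusion should collapse to the clean $(r-1)!$ factor.

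The main obstacle I expect is bookkeeping the signs and the face-sharing correctly: a cell of $N_n$ lies on the shared face of several $d$-simplices of $M_n$, so naively summing ``contribution inside each $\sigma$'' overcounts. The clean way around this is to use the observation (from the cubing/barycentric picture) that $N_n$ restricted to the closed simplex $\sigma$ is a cone on $N_n \cap \partial\sigma$ from the barycenter $h(p)$ when $\mathcal C$ fills $\sigma$, hence contractible with $\chi=1$ there, and more generally the inclusion–exclusion $\chi(N_n)=\sum_\sigma (-1)^{d-\dim\sigma}\chi(N_n\cap \mathrm{int}\,\sigma \text{-part})$ over all faces $\sigma$ of $M_n$ assigns each open cell of $N_n$ to the unique carrier simplex it lies in — this makes the local contribution depend only on the coloring pattern of that one simplex, with no overcounting. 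Once that is set up, the only remaining work is the identity $\genfrac\{\}{0pt}{}{d+1}{r}=\sum_{s=1}^r \frac{(-1)^{r-s}s^d}{(s-1)!(r-s)!}$, which is the classical closed form for Stirling numbers of the second kind (times $r$), and the fact that terms with $r<|\mathcal C|$ vanish because a covering partition needs at least $|\mathcal C|$ blocks, which truncates the outer sum to start at $r=|\mathcal C|$. The boundary of $M$ plays no role since $M$ is assumed closed and the asymptotic density washes out any lower-dimensional discrepancies in $M_n$, giving a limit independent of the topology of $M$ as claimed.
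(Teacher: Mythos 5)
Your overall architecture is right and, after your ``clean way around,'' it converges to the paper's actual proof: write $\chi(N_n)$ as a signed count of open cells, assign each open cell to the unique carrier face of $M_n$ whose interior contains it, and reduce the expectation to a product of (face counts of the braid triangulation per vertex) and (an inclusion--exclusion probability on the coloring of that face's vertices). But two of your quantitative attributions are wrong, and one of them conceals the step that actually makes the formula close.

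First, the factor $(r-1)!$ is not a local Euler characteristic value of the color class on a pattern with $r$ blocks, and the verification you defer would show this. The contribution of a carrier face with $r$ vertices colored \emph{exactly} by $\mathcal{C}$ is a single open cell of dimension $r-|\mathcal{C}|$, i.e.\ $(-1)^{r-|\mathcal{C}|}$; that sign is already encoded in the inner sum, since $\sum_{X\subseteq\mathcal{C}}(-1)^{|X|}\bigl(-\sum_{C_i\in X}p_i\bigr)^r=(-1)^{r-|\mathcal{C}|}P_r(\mathcal{C})$ where $P_r(\mathcal{C})$ is the probability that $r$ iid vertices use exactly the colors of $\mathcal{C}$ (not merely cover it). The factor $\genfrac{\{}{\}}{0pt}{}{d+1}{r}(r-1)!$ is instead the asymptotic number of $(r-1)$-dimensional faces of $M_n$ per vertex: an $r$-vertex face of the braid triangulation is a monotone chain in a unit cube, equivalently a labeled partition of a subset of the $d$ coordinates into $r-1$ or $r$ nonempty parts, giving $\genfrac{\{}{\}}{0pt}{}{d}{r-1}(r-1)!+\genfrac{\{}{\}}{0pt}{}{d}{r}\,r!=\genfrac{\{}{\}}{0pt}{}{d+1}{r}(r-1)!$ per vertex (relatedly, the density of top simplices per vertex is $d!$, not $(d+1)!$). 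Second, and more importantly, you never state the cancellation that justifies restricting to exactly-$\mathcal{C}$-colored faces: when the carrier face carries all of $\mathcal{C}$ \emph{plus} at least one extra color, its interior contains two open cells of $N_n$ of consecutive dimensions (the interior of the disk $D_\sigma$ and the interior portion of $\partial D_\sigma$), whose signed contributions cancel. This is exactly why the probability in the formula is the ``exactly $\mathcal{C}$'' probability rather than the ``covers $\mathcal{C}$'' probability you invoke, and why the result is independent of the remaining $k-|\mathcal{C}|$ colors. With the carrier decomposition over faces of all dimensions, the cancellation argument, and the corrected role of $(r-1)!$, your outline becomes the paper's proof.
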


Before giving some examples and a proof, we remark that there are some deterministic dependencies between these Euler Characteristics. There are $2^k-1$ Euler Characteristics, one for each nonempty subset of the $k$ colors. The Euler Characteristic of every odd-dimensional color class is half the Euler Characteristic of its boundary, which is a linear combination of other Euler Characteristics, of classes with more colors. Also the Euler Characteristic of the ambient space~$\chi(M_n)$ is linearly expressed using inclusion-exclusion of color classes, and since $\chi(M_n) = \chi(M) = O(1) = o(n^d)$, this gives an additional linear relation if $d$ is even. In either case, $2^{k-1}-1$ degrees of freedom are left for the expected Euler Characteristic densities, or for the random fluctuations around them. 

For example, if $k=2$ and $d$ is odd then it follows that $\mathcal{E}_B = \mathcal{E}_W = \tfrac12 \mathcal{E}_{BW}$ using boundaries, while if $d$ is even then $\mathcal{E}_{BW} = 0$ and $\mathcal{E}_B + \mathcal{E}_W = 0$ using $\chi(M)$. For $k=3$ and $d$ odd, $\mathcal{E}_{RGB} = 0$, and $\mathcal{E}_{R} = \tfrac12\mathcal{E}_{RG}+\tfrac12\mathcal{E}_{RB}$, and similarly for $\mathcal{E}_{G}$ and $\mathcal{E}_{B}$. If
$k=3$ and $d$ is even then $\mathcal{E}_{RG} = \mathcal{E}_{RB} = \mathcal{E}_{GB} = \tfrac12\mathcal{E}_{RGB}$, and $\mathcal{E}_R + \mathcal{E}_G + \mathcal{E}_B - \mathcal{E}_{RG} - \mathcal{E}_{RB} - \mathcal{E}_{GB} + \mathcal{E}_{RGB} = 0$ using $\chi(M)$. Similarly for $k=4$ colors, there are 8 linear relations between the 15 Euler Characteristics, leaving 7~degrees of freedom.

\begin{example}
\label{1of2colors}
A~single color~$\mathcal{C} = \{C\}$ that occurs with probability $p \in [0,1]$ in a $k$-colored $d$-dimensional manifold, for any $k \geq 2$ and $d \geq 1$, gives a random one-color class of the same dimension. Its expected Euler Characteristic density is 
$$ \mathcal{E}_C(p,\dots) \;=\; \sum_{r=1}^{d+1} \genfrac{\{}{\}}{0pt}{1}{d+1}{r} \, (r-1)! \, (-1)^{r+1} \, p^r $$
The first few expected densities for $d=1,2,3$ are respectively $p-p^2$, $\;p-3p^2+2p^3$, and $p-7p^2+12p^3-6p^4$. The latter changes sign at~$p=\tfrac12(1\pm1/\sqrt{3})$. 

By taking $k=2$ colors $C$~and~$D$, and using the linear relations from the above remark:
$$ \mathcal{E}_C(p,1-p) \;=\; \mathcal{E}_D(1-p,p) \;=\; (-1)^{d+1} \mathcal{E}_C(1-p,p) $$
Hence these polynomials are symmetric around $p=\tfrac12$.
The case $k=2$ was treated in~\cite{bobrowski2020homological} for the torus~$T^d_n$, where the sign changes in $\mathcal{E}_C$ are compared with phase transitions in the topology of the color class.  If $d$ is odd then $\mathcal{E}_C$ is half of $\mathcal{E}_{CD}$, the expected Euler Characteristic density of the two-color class. Letting $d=3$ for example, the above formula reflects how the relative occurrence of 2-spheres, which have positive Euler characteristic, and surfaces of genus greater than one, which have negative Euler characteristic, transitions with~$p \in [0,1]$.
\end{example}

\begin{example}
\label{2of3colors}
Consider a $k$-colored manifold with $k \geq 2$ colors. Every two colors $C,D \in \mathcal{C}$  give a codimension-one submanifold, possibly with boundary. Their expected Euler Characteristic is
$$ \mathcal{E}_{CD}(p,q,\dots) \;=\; \sum_{r=2}^{d+1} \genfrac{\{}{\}}{0pt}{1}{d+1}{r} \, (r-1)! \, (-1)^{r} \left[ (p+q)^r - p^r -q^r \right] $$
For example in dimension $d=3$, one obtains
$14 p q - 36 p^2 q - 36 p q^2 + 24 p q^3 + 36 p^2 q^2 + 24 p^3 q$, which vanishes on the ellipse $21 (p - q)^2 + 3 (7 p + 7 q - 6)^2 = 10$, being negative inside.
\end{example}

\begin{example}
\label{surfaces}
The all-color class in a random $(d-1)$-coloring of a $d$-dimensional manifold is a closed surface. Its expected Euler Characteristic density is
\begin{align*}
\mathcal{E}_{\mathcal{C}}(p_1,\dots,p_{d-1}) \;=\;& \left[\tbinom{d+1}{3} + 3\tbinom{d+1}{4} \right] \,(d-2)!\, \left[ (d-1)! \prod\nolimits_ip_i \right] \\
&\;-\; \left[\tbinom{d+1}{2}\right] \,(d-1)!\, \left[ \tfrac{d!}{2!} \sum\nolimits_jp_j \prod\nolimits_ip_i \right] \\
&\;+\; \left[ \,1 \left] \,d!\, \left[ \tfrac{(d+1)!}{3!} \sum\nolimits_jp_j^2 \prod\nolimits_ip_i \;+\; \tfrac{(d+1)!}{2!2!} \sum\nolimits_{j<l}p_jp_l \prod\nolimits_ip_i \right] \right.\right. \end{align*} 
The factors in the left brackets are the Stirling numbers $\genfrac{\{}{\}}{0pt}{1}{d+1}{d-1}$, $\genfrac{\{}{\}}{0pt}{1}{d+1}{d}$, $\genfrac{\{}{\}}{0pt}{1}{d+1}{d+1}$, and those in the right brackets are respective probabilities of all $d-1$ colors to occur in a simplex. Both of these quantities are computed here directly rather than by inclusion-exclusion as in the statement of the formula. The proof below shows where they come from. This simplifies to
$$ \mathcal{E}_{\mathcal{C}}(p_1,\dots,p_{d-1}) \;=\; \frac{(d+1)!\,(d-1)!}{24}\,\left(d \sum\limits_{j=1}^{d-1} p_j^2 \;-\; 2 \right) \prod\limits_{i=1}^{d-1}p_i $$

By this formula, the expected Euler Characteristic density is negative within a ball of radius $\sqrt{1-(2/d)} \, / (d-1)$ in the parameter space, centered at the middle point where each color occurs with probability $p_i = \tfrac{1}{d-1}$. For example, in a 3-colored 4-dimensional manifold with probability vector $p = (r,g,b)$, the 3-color class is a surface with expected Euler Characteristic density $60rgb(2r^2+2g^2+2b^2-1)$, and thus vanishes on a circle of radius $1/\sqrt{6}$ around $(\tfrac13,\tfrac13,\tfrac13)$ in the $r+g+b=1$ plane.
\end{example}

\begin{proof}[Proof of Theorem~\ref{ec}]
Let $M$ be a closed triangulated $d$-manifold and let $\mathcal{C} \subseteq \{C_1,\dots,C_k\}$ be a nonempty subset of the colors, as in the statement of the theorem. Also, let $M_n$ be the sequence of refined triangulations of~$M$ as in Definition~\ref{scheme}, and let $N_n \subseteq M_n$ the respective color classes that correspond to~$\mathcal{C}$, in a random $k$-coloring of the vertices, independently according to the probability vector $p = (p_1,\dots,p_k)$, as in Definition~\ref{eec}. 

Fix a coloring of the vertices of~$M_n$, and denote by $V_{\sigma}$ the set of vertices of a simplex $\sigma$ in~$M_n$, so that $|V_{\sigma}| = \dim \sigma+1$. By Lemma~\ref{manifold_in_simplex} and the proof of Lemma~\ref{submanifold}, the color class~$N_n$ corresponding to~$\mathcal C$ admits a cell complex structure, where for every $\sigma$ in~$M_n$ one of the following three cases holds. If some color in~$\mathcal{C}$ is missing from the colors assigned to~$V_{\sigma}$, then $\sigma$~is disjoint from~$N_n$. If the colors that appear in~$V_\sigma$ are all the colors in~$\mathcal{C}$ and no others, then $N_n \cap \sigma$ is a disk of dimension~$|V_{\sigma}| - |\mathcal{C}|$ whose boundary is~$N_n \cap \partial\sigma$. In the remaining case that $V_\sigma$~are assigned all the colors in~$\mathcal{C}$ plus some other colors, $N_n \cap \sigma$ is a disk~$D_{\sigma}$ of dimension~$|V_{\sigma}| - |\mathcal{C}|$. The boundary of the disk~$D_{\sigma}$ intersects the interior of~$\sigma$ as an open disk of dimension~$|V_{\sigma}| - |\mathcal{C}|-1$.

We compute the Euler Characteristic of $N_n$ from this cell decomposition, as an alternating count of the cells of all dimensions: $\chi(N_n) = \sum_i(-1)^i f_i(N_n)$ where $f_i(N_n)$ is the number of $i$-dimensional cells. The contribution of the third case listed above to the Euler Characteristic cancels due to the alternating signs, because the interior of such a simplex~$\sigma$ contains two open cells of consecutive dimensions. It is hence sufficient to count only simplices assigned \emph{exactly} the colors in~$\mathcal{C}$. 

Therefore, in every term of the above sum over~$i \geq 0$, we replace~$f_i(N_n)$ with the number of $\mathcal{C}$-colored simplices of~$M_n$ whose $r=|\mathcal{C}|+i$ vertices are colored exactly by~$\mathcal{C}$. Initially, there are $f_{r-1}(M_n)$ many $r$-vertex simplices in $M_n$, and then each one is exactly $\mathcal{C}$-colored with the same probability~$P_r(\mathcal{C})$, to be computed below. By linearity of expectations, the expected density of $\chi(N_n)$ becomes
$$ \mathcal{E}_{\mathcal{C}}(p_1,\dots,p_k) \;=\; \frac{1}{f_0(M_n)} \sum\nolimits_{r=|\mathcal{C}|}^{d+1} \;(-1)^{r-|\mathcal{C}|}\; f_{r-1}(M_n)\; P_r(\mathcal{C}) $$

In order to estimate $f_{r-1}(M_n)$, we first count the $r$-vertex simplices in the flat torus $M_n=T^d_n$ and later generalize to other closed triangulated $d$-manifolds $M_n$. The number of vertices is $F_0(T^d_n) = n^d$. For each vertex $v \in \{0,1,2,\dots,n-1\}^d$, we can separately count the $r$-vertex simplices that are incident to $v$ in the small cube $v+[0,1]^d$. Indeed, in the braid triangulation used in $T_n^d$, the vertices of every $r$-vertex simplex are $v=v_1 < v_2 < \dots < v_r$, increasing in the partial order of~$\{0,\dots,n\}^d$. Their differences $v_2-v_1$, $v_3-v_2, \dots$ are nonzero in $\{0,1\}^d$ with disjoint supports, so that also $v_r-v \in \{0,1\}^d$. Thus, an $r$-vertex simplex corresponds to a partition of some subset of $\{1,\dots,d\}$ into $r-1$ nonempty labeled parts. The number of partitions of $a$ items into $b$ unlabeled nonempty parts is the Stirling number of the second kind $\genfrac{\{}{\}}{0pt}{}{a}{b}$, so their labelled count is $\genfrac{\{}{\}}{0pt}{}{a}{b}b!$. We divide into cases according to whether the total difference is $v_r-v=(1,\dots,1)$ which splits into $r-1$ differences, or there is an $r$th part of unused coordinates. The total count is therefore 
$$ f_{r-1}(T^d_n) \;=\; n^d \left(\textstyle \genfrac{\{}{\}}{0pt}{}{d}{r-1}\,(r-1)! + \genfrac{\{}{\}}{0pt}{}{d}{r}\,r! \right) \;=\; n^d\, \textstyle \genfrac{\{}{\}}{0pt}{}{d+1}{r} \,(r-1)! $$ 
using a standard combinatorial recurrence. See the statement of the theorem for a standard formula for computing the Stirling numbers. We note that that formula counts partitions without empty parts based on inclusion-exclusion of partitions with empty parts.

The probability that $r$ given vertices $V_\sigma$ are assigned colors from the set~$\mathcal{C}$ is $(\sum_{C_i \in \mathcal{C}}p_i)^r$. However, this event includes colorings that only use a proper subset of~$\mathcal{C}$, and those should be excluded from $P_r(\mathcal{C})$. For example, $P_r(\{C_i,C_j\}) = (p_i+p_j)^r-p_i^r-p_j^r$ since we need both colors to occur in~$V_\sigma$. More generally, by inclusion-exclusion over all subsets $X \subseteq \mathcal{C}$, the probability that the colors that occur in~$V_{\sigma}$ are exactly $\mathcal{C}$ is
$$ P_r(\mathcal{C}) \;=\; \sum_{X \subseteq \mathcal{C}} (-1)^{|\mathcal{C} \setminus X|} \left(\sum\nolimits_{C_i \in X} p_i \right)^r $$
Substituting $f_{r-1}(T^d_n)$ and $P_r(\mathcal{C})$ in the above sum for $\mathcal{E}_\mathcal{C}(p_1,\dots,p_k)$ gives the exact formula stated in the theorem for every $n$, and hence in the limit $n \to \infty$ as well.

Finally, we generalize from $T^d_n$ to $M_n$. We recall Definition~\ref{scheme} of the subdivisions~$M_n$. A~triangulated $d$-manifold $M$ with $F_d(M)$ top simplices, is first subdivided into $(d+1)F_d(M)$ large cubes. Each one of these cubes is further subdivided into $n^d$ small cubes, which are subdivided again using the braid triangulation. Note that each of the $(d+1)F_d(M)$ large cubes is structured exactly as the flat torus~$T^d_n$, except for the $d(d+1)F_d(M)$ cubical boundaries between the large cubes, which altogether contain at most $d(d+1)F_d(M)n^{d-1}$ vertices. Since the construction of the subdivision is such that the valency of vertices is bounded as $n$ grows, only $O(n^{d-1})$ faces of any dimension are incident to such vertices, and the other $O(n^d)$ lie within the large cubes which are triangulated similar to $T^d_n$. In conclusion, the number of vertices of $M_n$ is $(d+1)F_d(M)n^d \pm O(n^{d-1})$, and similarly for higher-dimensional faces $f_{r-1}(M_n) = (d+1)F_d(M)\,f_{r-1}(T^d_n) \pm O(n^{d-1})$ for each $r \leq d+1$. It follows that the ratio giving the Euler Characteristic density for $M_n$ is the same as for the flat torus $T^d_n$ up to a negligible $(1 \pm O(1/n))$ factor.
\end{proof}

\subsection{Percolation}
\label{percolation}
Consider a triangulated manifold~$M$, with a large or infinite number of vertices, and color them as above, independently at random using a~probability vector $p = (p_1,\dots,p_k)$. The color class of all $k$ colors, and other color classes, may have many connected components. The basic phenomenon of percolation is that random subsets of a space, such as these color classes, have different phases of connectivity.  Typically, the connected components are small and bounded for some values of the parameters~$p$, while giant or infinite components emerge for other values, and the transition is usually sharp.

The percolation properties of random color classes can be studied under various scenarios for the topology of the ambient space~$M$. In one standard setting, $M$~is a triangulation of~$\mathbb{R}^d$ with vertices~$\mathbb{Z}^d$, as in Definition~\ref{standard}. For $k = 2$ colors, the behavior of the color classes are well-studied cases in percolation theory. Sheffield and Yadin~\cite{sheffield2014tricolor} initiated the mathematical study of 3-color percolation in three dimensions, which had only been considered previously in the physics literature~\cite{bradley1992growing, nahum2012universal}. By a clever topological argument, they established the existence of some $p=(p_1,p_2,p_3)$ that gives rise to long 3-color components with high probability. They worked with a tessellation dual to ours, which actually leads to an equivalent model.  

A~key advantage of studying percolation in these coloring-based models is that the random components are well-behaved submanifolds, of a preferred dimension. This is usually not the case in models of percolation, where typical random subsets have many singularities. Since the all-color classes are proper submanifolds, powerful tools such as cohomology and intersection theory may be applied, as done in~\cite{sheffield2014tricolor}.

The event of percolation may be precisely defined by several different notions of large-scale connectivity, which are sometimes known or believed to occur in the same phases in the parameter space of the random models. One may ask for which parameters~$p$ there exists an infinite component with probability one, or whether the average size of the component containing a given point converges or diverges. Other possible definitions are based on the asymptotics of the probability of ``crossing'' an $n \times \dots \times n$ cube, with components that connect two opposite faces.

Another recent approach is \emph{homological percolation}, introduced by Bobrowski and Skraba \cite{bobrowski2020homological2}, and closely related to Topological Data Analysis. The embedding $f:N \hookrightarrow M$ of a random subset in the given manifold induces maps between the corresponding homology groups $f_{\ast}:H_i(N;R) \to H_i(M;R)$, for every dimension $i \in \{1,\dots,d\}$ and commutative ring of coefficients~$R$. Two events $A_i$ and $E_i$ are defined to occur if, respectively, \emph{any} or \emph{every} nontrivial $i$-cycle in~$H_i(M;R)$ is in the image of $f_\ast$. Intuitively this means that $N$ contains large $i$-cycles at the scale of $M$. The one-dimensional events $A_1$ and $E_1$ are expected to correlate with the common percolation notions, of having unbounded components or ones that ``cross''~$M$ and thus create large cycles. 

This approach is suitable for the study of percolation in closed manifolds $M$ with nontrivial homology. Bobrowski and Skraba \cite{bobrowski2020homological} experiment with various constructions of random subsets in Riemannian manifolds. In our terminology, one of their models is equivalent to the 1-color class~$N$ in a random 2-coloring of the flat $d$-torus $M = T^d_n$, with probabilities $(p,1-p)$. Conveniently, this $M$~has nontrivial homology groups, with $\dim H_i(M;\mathbb{Z}_2) = \tbinom{d}{i}$. Their simulations indicate a dichotomy for the parameter~$p$ between the $i$th \emph{subcritical} phase where $P(A_i) \to 0$ and the \emph{supercritical} phase where $P(E_i) \to 1$. Some of their discoveries have been recently proven by Duncan et~al.~\cite{duncan2021homological}.
The following general question contains settings from \cite{sheffield2014tricolor, bobrowski2020homological,duncan2021homological} as special cases. 

\begin{question}
\label{phases}
For every four positive integers $m \leq k \leq d$ and $i \leq d-m+1$, consider the $n$-subdivided $d$-dimensional torus $M_n=T_n^d$ for $n \in \mathbb{N}$, randomly $k$-colored according to a probability vector  $p=(p_1,\dots,p_k)$, and let $N_n \subset M_n$ be the random $m$-color class corresponding to the colors $\{C_1,\dots,C_m\}$. For which parameters~$p$ does $N_n$ satisfy $P(A_i) \to 0$ as $n \to \infty$? Which $p$ satisfy $P(E_i) \to 1$, if any? Are there other phases?
\end{question}

\begin{remark}
In fact, it is sufficient to consider only $m,k$ such that $m \in \{k-1,k\}$. Indeed, an $m$-color class does not depend on any distinctions between the colors not involved in it, so it only matters if there exist other colors or not. Allowing general $m \in \{1,\dots,k\}$ lets one study the interactions between all the $\scriptscriptstyle{\tbinom{k}{m}}$ different $m$-color classes.
\end{remark}

\begin{remark}
\label{intersection}
In some cases, algebraic considerations provide information on Question~\ref{phases}. For even dimensions~$d$, take $d/2+1$ colors, and consider the $d/2$-homological percolation of the $d/2$-dimensional all-color class~$N$, so that~$m=k=d/2+1$ and $i=d/2$. The connected components of~$N$ generate a subgroup of $H_{d/2}(M; \mathbb{Z})$ under the embedding. If $M$ admits a nontrivial intersection form, then some pair of $d/2$-dimensional submanifolds has a nonzero intersection number, and then the elements represented by these submanifolds cannot both occur in this subgroup. It follows that $P(E_{d/2})=0$ in this setting, regardless of the probabilities $(p_0,\dots,p_{d/2})$ assigned to each color.

For example, a uniformly random $3$-coloring of $M=T^4$ yields a collection of closed surfaces, whose images generate some subgroup of $H_2(M; \mathbb{Z}) \cong \mathbb{Z}^6$. This is a proper subgroup since 2-cycles that represent an $x_1x_2$-torus and an $x_3x_4$-torus must intersect. When such topological constraints exist, it is natural to ask about an adjusted event $E_i'$ that a subgroup of maximal rank is generated by the color class. \end{remark}

Suppose that there exist two phases of probability vectors $(p_1,\dots,p_k)$ for some $d,k,m,i$ as in Question~\ref{phases} above: a~subcritical phase where $P(A_i) \to 0$ and a supercritical phase where $P(A_i) \to 1$. Here are some natural questions about their universality. This list includes and extends questions from~\cite{bobrowski2020homological2}.

\begin{samepage}  \begin{itemize}
\itemsep0.125em
\item 
Do these phases extend across all manifolds~$M$, using the sequence of subdivisions $M_n$ constructed in Definition~\ref{scheme}?
\item 
In the supercritical phase, does $P(E_i') \to 1$ as well? Note that we use $A_i$ and~$E_i'$ since $E_i$ is impossible in some cases as in Remark~\ref{intersection}.
\item
Considering different rings of coefficients~$R$ such that $H_i(M;R)$ is nontrivial, do the two phases depend on~$R$? 
\item 
If the fundamental group of~$M$ is nonabelian, do the phases for $A_1$ correspond to the emergence of loops in~$N$ that are nontrivial in~$\pi_1(M)$? that generate~$\pi_1(M)$?
\item 
Does the supercritical phase for $A_1$ in this model coincide with other events of percolation, such as the emergence of large or infinite components? 
\end{itemize}  
\end{samepage}

\subsection{Discussion}

The Euler Characteristic $\chi(N)$, studied in Section~\ref{eec-section}, is only one topological property of the random submanifolds $N$ that arise as color classes in this model. Among other properties that are natural to study are the Betti numbers $b_i(N) = \mathrm{rank}\, H_i(N)$. These numbers refine the Euler Characteristic via $\chi(N) = \sum_{i\geq 0}(-1)^i b_i(N)$.

The typical behavior of the Betti numbers, often observed in simulations of random models, and in various constructions from topological data analysis, is that one of the~$b_i(N)$ is expected to be more dominant than the others, see~\cite[for example]{edelsbrunner2008persistent,adler2010persistent,kahle2013limit}. In such circumstances, the expected Euler Characteristic may be viewed as an accessible approximation to the magnitude of that Betti number. These random constructions are usually parametrized, and the dominant dimension~$i$ may vary with the parameters. Then, the zeros of the expected Euler Characteristic may approximate the boundaries between regions in the parameter space of different dominant homology groups. 

A~related curious phenomena of this type has been discovered by Bobrowski and Skraba \cite{bobrowski2020homological}. They have found experimentally that the zeros of the expected Euler Characteristic are close to the phase transitions of \emph{homological percolation}. One of the random models they have investigated is the 1-color class of a random 2-coloring, as discussed in Example~\ref{1of2colors} above. This naturally raises the question whether these phenomena extend to general $m$-color submanifolds in random $k$-colorings. 

Take the $3$-colored 4-dimensional torus $T^4_n$ for example. Its parameter space is the triangle $\{(r,g,b) : r+g+b=1\}$. We have seen in Example~\ref{surfaces} that the expected Euler Characteristic of the 3-color class is positive outside the circle $2r^2+2g^2+2b^2=1$ and negative inside. The Euler Characteristic tells us that $b_1 > 2b_0$ on average inside the circle. On the other hand, we expect $2$-dimensional homological percolation in some region within the triangle, namely, having a nontrivial surface in the second homology group of the 4-torus. Is the circle defined by the Euler Characteristic related to the domain where percolation occurs?

In the case of the 1-dimensional 3-color class in a 3-colored 3-torus, this 1-manifold does not have a variable Euler Characteristic to compare to its percolation domain. However, one may try to relate it to the three Euler Characteristics of the 2-color classes, shown in Example~\ref{2of3colors} above to vanish on certain ellipses in the triangular parameter space. Is the percolation domain from Figure~9 in~\cite{sheffield2014tricolor} approximated by some condition on these three numbers? In this regard, we remark that the threshold 0.2459615 quoted in this figure from~\cite[b.c.c.~lattice]{lorenz1998universality} refers to the wrong lattice, and should be $0.169 \pm 0.002$ as in~\cite[b.c.c.~NN+2NN lattice]{domb1966crystal,dalton1966crystal, jerauld1984percolation}.

\section{Four-Ball Genus}
\label{genus4}
In this section we discuss preliminary experimental work applying our model to search for topologically interesting surfaces in 4-manifolds.
A fundamental issue in the study of smooth 4-dimensional manifolds is the question of when a curve in the boundary of a 4-manifold bounds a smooth disk. Finding embedded disks is a crucial step in performing smooth surgery on 4-manifolds. An important special case is the question of whether a given knot in the $3$-sphere is smoothly \emph{slice}, meaning that it bounds a smooth disk embedded in the 4-ball. There is no known algorithm that determines whether or not a knot is slice. The 4-ball genus computation is challenging even for small knots. The KnotInfo database lists invariants of knots with up to 12 crossings~\cite{knotinfo}.  The task of identifying the slice knots and computing the 4-ball genus of the others has only recently been completed~\cite{brittenham2021smooth}. Most of the recent progress is due to getting better upper bounds for the 4-ball genus by explicit construction of a low-genus surface in the 4-ball. This has been done by various methods suitable for different situations, and often by reducing to a known case of another knot. It is desirable to develop general methods to effectively find low-genus surfaces spanning a given knot. The relevance of this question to central questions of 4-manifold topology is reflected in a recent paper of Manolescu and Piccirillo, in which they produce five knots that bound topological disks in the 4-ball \cite{manolescu2021zero}. They show that if any of these knots bound smooth disks then there exists an exotic smooth 4-sphere, a counterexample to the four-dimensional smooth Poincar\'e conjecture. The 4-ball genus problem in general  presents a challenge to our model. If we fix a knot~$K$, we can compute many random surfaces in $B^4$ with boundary~$K$, and look among them for a low genus surface.  If $K$ is smoothly slice then such a process will find a disk with nonzero probability, while otherwise it is doomed~to~fail. 

We indicate one approach to such a process. We begin by coloring a triangulation of $S^3 = \partial B^4$ with three colors so that the 3-color class is precisely the knot~$K$. We then randomly color an extension of this triangulation to $B^4$. This produces a 3-color class that consists of a connected orientable surface~$F$ whose boundary is precisely~$K$, along with closed components not meeting~$K$. It is straightforward to compute the genus of~$F$ and in this way to get an upper bound for the 4-ball genus of~$K$. The main effort is to find a coloring of~$B^4$ that minimizes this bound, and for this problem we design a local search algorithm. Initially, the 3-color surface has many other components and $F$ has large genus. We iteratively recolor internal vertices of~$B^4$, and check the effect on the surface. We bias towards accepting changes that decrease the genus of~$F$, or that of the whole 3-color class, and rejecting those that increase genus. We decide whether to recolor by tossing a biased coin, and follow the heuristic of simulated annealing~\cite[\S4]{aarts2003local}. In this section, we discuss the theoretical justification and practical implementation of these experiments.

We now give some details of how we set up the triangulated 4-ball and the boundary conditions on the 3-sphere. Recall Definition~\ref{standard} above, which realizes the 3-sphere in 4-space by restricting the triangulated grid with vertices $\mathbb{Z}^4$ to the boundary of an $n \times n \times n \times n$ box. The following proposition, starting with a given knot or link~$L$, constructs a 3-coloring of the 3-sphere so that $L$ is realized as the entire 3-color class. Unlike Proposition~\ref{link} above, this construction is explicit and given in terms of simple building blocks called \emph{tiles}, that implement various local features of a link diagram. 

\begin{proposition}
\label{tiling}
Every link $L$ can be effectively constructed as the 3-color class in a 3-colored triangulated 3-sphere~$S_n^3 = \partial B_n^4$, for a sufficiently large 4-ball $B_n^4$.
\end{proposition}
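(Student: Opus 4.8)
The plan is to give an explicit combinatorial procedure, in the spirit of Wang tiles, that turns a planar diagram of $L$ into a $3$-coloring of $\partial B_n^4$. First I would fix a planar diagram $D$ of $L$ and, after a planar isotopy, assume $D$ is a \emph{grid} (or \emph{rectangular}) diagram: a finite union of horizontal and vertical segments of the integer lattice $\mathbb{Z}^2$, meeting only at right-angle corners and at transverse crossings of one horizontal with one vertical segment, all contained in a bounding box $[0,N]^2$ that is computable from $L$. I would then realize $L$ in three dimensions by placing $D$ in the middle horizontal plane of a slab $[0,N]^2 \times [0,h]$ and, at each crossing, routing the over-strand through a higher horizontal level, obtaining a PL link $\widehat L$ isotopic to $L$.

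Next I would fit this slab inside $\partial B_n^4$. The facet $F = \{x_1 = 0\} \cap [0,n]^4 = \{0\} \times [0,n]^3$ is a triangulated $3$-cube \emph{all} of whose points lie on $\partial B_n^4$, so it already supplies a genuinely $3$-dimensional grid in which to place $\widehat L$ — no excursion into the interior of $B_n^4$ is needed, the "height" direction being $x_4$ inside $F$. The rest of $\partial B_n^4$ is a $3$-ball attached to $F$ along $\partial F$, and I would color it, together with a collar of $\partial F$, so that no simplex there receives all three colors.

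The heart of the argument is a finite library of \emph{tiles}. After subdividing each unit cell of the $\mathbb{Z}^3$-grid inside $F$ into a $c\times c\times c$ block of lattice cells, for a fixed constant $c$, each tile is such a block together with a prescribed $3$-coloring of its $(c+1)^3$ vertices, arranged so that within the tile the Voronoi $3$-color class is exactly the intended local picture: an empty tile (no arc), a straight-strand tile through a pair of opposite faces, a corner tile, a "ramp" tile in which a strand changes level, and a crossing tile in which the under-strand stays at the central level while the over-strand passes above it. The figure-eight configuration of Figure~\ref{figure8} and the skew-prism pieces from the proof of Theorem~\ref{dfw} serve as prototypes for several of these. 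The key compatibility requirement — analogous to the edge-matching condition for Wang tiles — is that the coloring induced on each $2$-dimensional face of a tile depends only on the combinatorial ``port'' datum there (whether a strand crosses that face, and if so at which level and in which direction), drawn from a small finite alphabet; tiles whose shared ports agree then glue to a well-defined simplicial coloring of their union, whose $3$-color class is the union of the within-tile arcs.

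Finally I would lay out the tiles according to $D$ — a straight, corner, ramp, or crossing tile over the corresponding cell, empty tiles elsewhere in the slab, and a filler coloring with no trichromatic simplex on the rest of $F$ and on $\partial B_n^4 \setminus F$. Checking that the resulting triangulation and coloring of $\partial B_n^4$ coincide with the braid triangulation of Definition~\ref{standard}, and that the only trichromatic simplices are those inside arc tiles, gives that the entire $3$-color class is a PL link isotopic to $\widehat L \cong L$; the side length $n$ is $c$ times the bounding box of $D$, and may be padded to any larger value. I expect the main obstacle to be the explicit design of the crossing and ramp tiles: one must simultaneously produce the correct over/under strands at the correct levels, keep all six boundary-face colorings within the finite port alphabet so that every matching gluing is legal, and verify the vertex links so that $\partial B_n^4$ stays a combinatorial $3$-sphere. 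Making the crossing tile's port data ``generic'' — independent of the crossing sign, which should be absorbed by a ramp on one side — is the delicate bookkeeping step.
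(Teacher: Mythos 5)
Your overall strategy --- a finite library of explicitly colored tiles driven by a diagram of $L$, laid out so that the only trichromatic simplices are inside the arc tiles --- is the same as the paper's, and your variations are reasonable: you place a grid diagram inside the single facet $F=\{x_1=0\}\cap[0,n]^4$ of $\partial B_n^4$ and separate over- from under-strands by ramps in the third direction of $F$, whereas the paper draws a spherical diagram on $\partial([0,n_x]\times[0,n_y]\times[0,n_z])$ and uses the fourth coordinate of the thickened shell to separate the strands at a crossing. The gap is in the gluing step, and it is exactly the point where the paper has to work.

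Along the $3$-color curve the three Voronoi regions meet the normal disk of each strand in three sectors, so the three $2$-color classes form three surface ``pages'' bounded by $L$ whose union must separate the complement into exactly three regions, one per color (this is the structure behind Proposition~\ref{link}). Your port alphabet records only whether a strand crosses a tile face and at which level and direction; it does not record which of the three colors occupies which region of that face. Two internally consistent tiles whose ports ``agree'' in your sense can therefore assign different colors to the regions they share, and the glued coloring is not well defined. Enlarging the alphabet to include the full coloring of each tile face repairs the gluing but relocates the difficulty: you must then show that a globally consistent assignment of colored tiles exists for \emph{every} diagram, i.e.\ that the three pages close up. The paper resolves this by checkerboard-coloring the complementary faces of the diagram and then assigning red/green to the non-blue faces so that each crossing sees one of each; this is possible only when at most two complementary faces are odd-sided, which is why the proof must first invoke the meander and potholder diagram theorems to put $L$ in that special form. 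Your remark that the crossing tile's port data can be made ``generic,'' with the crossing sign ``absorbed by a ramp,'' asserts away precisely this global consistency problem. To close the gap you would need either an analogous global face-coloring argument adapted to grid diagrams with levels, or an explicit construction of the three disjoint spanning surfaces (e.g.\ three parallel copies of a Seifert surface, as in Propositions~\ref{link} and~\ref{strata}) realized tile by tile; neither is supplied.
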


\begin{proof}
We start from a planar diagram of~$L$. We need a diagram whose complement has at most two faces with an odd number of sides. The existence of such diagrams was shown by Adams, Shinjo, and Tanaka, using \emph{meander diagrams} for knots~\cite{adams2011complementary}. These results were extended by Even-Zohar, Hass, Linial and Nowik to links with an arbitrary number of components, using \emph{potholder diagrams}~\cite{even2019universal}. The proofs in these two works are constructive, providing algorithms that compute a new diagram of a given link $L$, with at most two odd-sided faces. We note however that these constructions often introduce new crossings, and it may be desirable to keep the diagrams small.

We isotope the diagram of $L$ in the 2-sphere, and draw it on the boundary of an $n_x \times n_y \times n_z$ box. This can be done so that the resulting diagram is made of the following 2-dimensional tiles and their reflections and rotations.
$$
\tikz{
\path[draw=black, line width=0.5, fill=black!10] (0,0) -- (0,1) -- (1,1) -- (1,0) -- cycle;
\draw[line width=4, black] (0.5,0) -- (0.5,1);
\clip (-0.25,-0.25) rectangle (1.25,1.25);
}\;\;\;\;
\tikz{
\path[draw=black, line width=0.5, fill=black!10] (0,0) -- (0,1) -- (1,1) -- (1,0) -- cycle;
\draw[line width=4, black] (0.5,0) to[in=0,out=90] (0,0.5);
\clip (-0.25,-0.25) rectangle (1.25,1.25);
}\;\;\;\;
\tikz{
\path[draw=black, line width=0.5, fill=black!10] (0,0) -- (0,1) -- (1,1) -- (1,0) -- cycle;
\draw[line width=4, black] (0.5,0) -- (0.5,1);
\draw[line width=4, black] (0,0.5) -- (0.35,0.5);
\draw[line width=4, black] (1,0.5) -- (0.65,0.5);
\clip (-0.25,-0.25) rectangle (1.25,1.25);
}\;\;\;\;
\tikz{
\path[draw=black, line width=0.5, fill=black!10] (0,0) -- (0,1) -- (1,1) -- (1,0) -- cycle;
\clip (-0.25,-0.25) rectangle (1.25,1.25);
}
$$
It is clearly sufficient to draw $L$ on one of the six faces of the box and put empty tiles on the others. However, it may be useful to use all six faces, to keep the dimensions of the box small. See Figure~\ref{tilesdiagram} for an example of a tiles diagram of a link on the boundary of a box. 

\begin{figure}
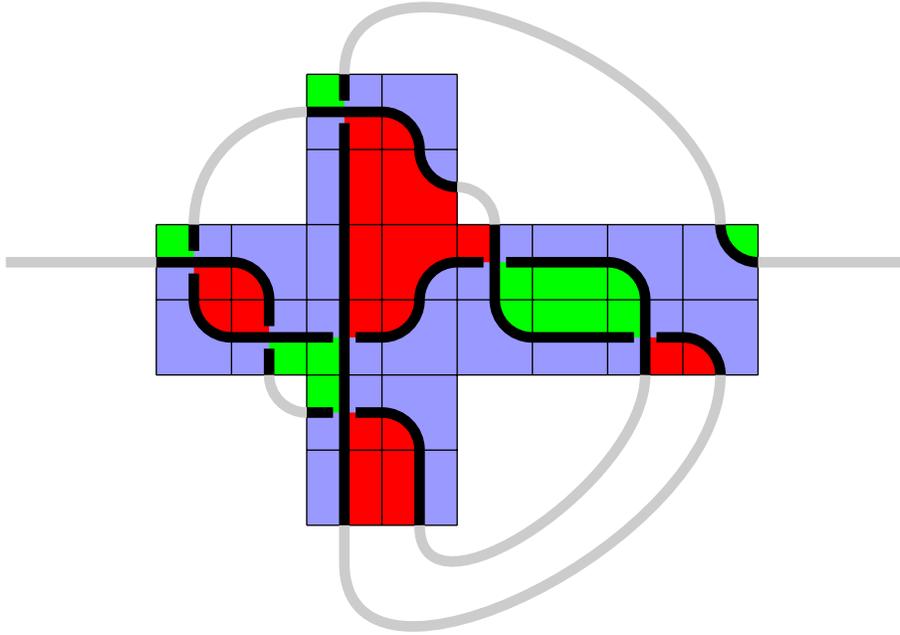

\centering
\vspace{-2em}
\tikz{
\fill[blue!40] (0,2) -- (2,2) -- (2,0) -- (4,0) -- (4,2) -- (8,2) -- (8,4) -- (4,4) -- (4,6) -- (2,6) -- (2,4) -- (0,4) -- cycle;
\fill[green] (2,6) -- (2.5,6) -- (2.5,5.5) -- (2,5.5) -- cycle
(0,4) -- (0,3.5) -- (0.5,3.5) -- (0.5,4) -- cycle
(8,4) -- (7.5,4) to[out=270,in=180] (8,3.5) -- cycle
(4.5,3.5) -- (6,3.5) to[out=0,in=90] (6.5,3) -- (6.5,2.5) -- (5,2.5) to[out=180,in=270] (4.5,3) -- cycle
(1.5,2.5) -- (2.5,2.5) -- (2.5,1.5) -- (2,1.5) -- (2,2) -- (1.5,2) -- cycle;
\fill[red] (4,4.5) to[out=180,in=270] (3.5,5) to[out=90,in=0] (3,5.5) -- (2.5,5.5) -- (2.5,2.5) -- (3,2.5) to[out=0,in=270] (3.5,3) to[out=90,in=180] (4,3.5) -- (4.5,3.5) -- (4.5,4) -- (4,4) -- cycle
(2.5,1.5) -- (2.5,0) -- (3.5,0) -- (3.5,1) to[out=90,in=0] (3,1.5) -- cycle
(6.5,2) -- (6.5,2.5) -- (7,2.5) to[out=0,in=90] (7.5,2) -- cycle
(1.5,2.5) -- (1.5,3) to[out=90,in=0] (1,3.5) -- (0.5,3.5) -- (0.5,3) to[out=270,in=180] (1,2.5) -- cycle;
\path[draw=black, line width=0.5] 
(2,0) -- (4,0) 
(2,1) -- (4,1) 
(0,2) -- (8,2) 
(0,3) -- (8,3) 
(0,4) -- (8,4) 
(2,5) -- (4,5) 
(2,6) -- (4,6) 
(0,2) -- (0,4)
(1,2) -- (1,4)
(2,0) -- (2,6)
(3,0) -- (3,6)
(4,0) -- (4,6)
(5,2) -- (5,4)
(6,2) -- (6,4)
(7,2) -- (7,4)
(8,2) -- (8,4);
\draw[line width=4, black] 
(4,4.5) to[out=180,in=270] (3.5,5) to[out=90,in=0] (3,5.5) -- (2,5.5)
(0.5,4) -- (0.5,3.65)
(0.5,3.35) -- (0.5,3) to[out=270,in=180] (1,2.5) -- (2.35,2.5)
(2.65,2.5) -- (3,2.5) to[out=0,in=270] (3.5,3) to[out=90,in=180] (4,3.5) -- (4.35,3.5)
(4.65,3.5) -- (6,3.5) to[out=0,in=90] (6.5,3) -- (6.5,2)
(3.5,0) -- (3.5,1) to[out=90,in=0] (3,1.5) -- (2.65,1.5)
(2.35,1.5) -- (2,1.5)
(1.5,2) -- (1.5,2.35)
(1.5,2.65) -- (1.5,3) to[out=90,in=0] (1,3.5) -- (0,3.5)
(8,3.5) to[out=180,in=270] (7.5,4)
(2.5,6) -- (2.5,5.65)
(2.5,5.35) -- (2.5,0)
(7.5,2) to[out=90,in=0] (7,2.5) -- (6.65,2.5)
(6.35,2.5) -- (5,2.5) to[out=180,in=270] (4.5,3) -- (4.5,4);
\draw[line width=4, black!20] 
(4.5,4) to[out=90,in=0] (4,4.5)
(2,5.5) to[out=180,in=90] (0.5,4)
(6.5,2) to[out=270,in=270] (3.5,0)
(2,1.5) to[out=180,in=270] (1.5,2)
(0,3.5) -- (-2,3.5)
(10,3.5) -- (8,3.5)
(7.5,4) to[out=90,in=90] (2.5,6)
(2.5,0) -- (2.5,-0.5) to[out=270,in=270] (7.5,2);
\clip (-2,0) rectangle (10,6);
}
\vspace{-2em}
\caption{A 7-crossing diagram of the knot $6_1$ made of tiles on the unfolded boundary of a $2 \times 2 \times 2$ box. The faces are colored blue and red/green according to the checkerboard rule. Using such a diagram, we construct a 3-sphere with 3 colors that meet along the link.}
\label{tilesdiagram}
\end{figure}

We then color blue some of the faces of the link diagram according to the \emph{checkerboard coloring}. This means that every segment of the curve that represents the link has a blue face on one side and a white face of the other. By parity considerations, this can be done so that both odd-sided faces, if they exist, are \emph{not} colored blue. Then, we color the remaining faces red and green, so that every crossing of the link diagram is adjacent to one green face and one red face, in addition to the two blue ones. The condition that all blue faces are even-sided guarantees that this can be done consistently. See Figure~\ref{tilesdiagram} for an example of such a face-colored tiles diagram.

Based on the diagram of~$L$ using square tiles on the boundary of an $n_x \times n_y \times n_z$ box, we consider the triangulated 4-ball $B=[0,3n_x] \times [0,3n_y] \times [0,3n_z] \times [0,3]$ and realize $L$ in the triangulated 3-sphere $\partial B$. The vertices of $\partial B$ are colored according to the tiles using the substitution rule of Figure~\ref{tiles}. This rule determines the colors of the $4^3$ vertices of each $3 \times 3 \times 3$ configuration, according to the corresponding square tile, and depending also on the colors of faces. Each $3 \times 3 \times 3$ tile is naturally oriented such that vertices lying on the two hyperplanes $(x,y,z,0)$ and $(x,y,z,3)$ are all colored blue, in accordance with the bottom and top layers respectively in Figure~\ref{tiles}. One can verify that adjacent tiles always agree on their common vertices, so that the vertex coloring of $\partial B$ is well-defined.

We claim that the resulting 3-colored 3-sphere realizes the given link $L$ as its 3-color class. Note that boundary of the 4-dimensional box $B$ is composed of eight 3-dimensional boxes. Six of these faces make a thickened spherical shell $\partial([0,3n_x] \times [0,3n_y] \times [0,3n_z]) \times [0,3]$. The other two 3-boxes in $\partial B$, which are the inside and outside of this shell, are colored blue only, and do not intersect the 3-color class. Projection along the last component takes this 3-dimensional shell to the 2-sphere $\partial([0,n_x] \times [0,n_y] \times [0,n_z])$, up to dilation by~3. The $3 \times 3 \times 3$ tiles are designed such that this projection takes the 3-color class back to the original square tile in the diagram of~$L$. This is easy to verify by examining all small $1 \times 1 \times 1$ cubes that have vertices of all three colors in Figure~\ref{tiles}.

\begin{figure}
\begin{center}
\begin{tabular}{ccccc}
\\
\tikz{
\path[draw=black, line width=0.5, fill=blue!40] (0,0) -- (0,1) -- (1,1) -- (1,0) -- cycle;
\clip (-0.25,-0.25) rectangle (1.25,1.25);
} &
\tikz{
\path[draw=black, line width=0.5, fill=green] (0,0) -- (0,1) -- (1,1) -- (1,0) -- cycle;
\clip (-0.25,-0.25) rectangle (1.25,1.25);
} &
\tikz{
\fill[blue!40] (0,0) -- (0,1) -- (0.5,1) -- (0.5,0) -- cycle;
\fill[red] (1,0) -- (1,1) -- (0.5,1) -- (0.5,0) -- cycle;
\path[draw=black, line width=0.5] (0,0) -- (0,1) -- (1,1) -- (1,0) -- cycle;
\draw[line width=4, black] (0.5,0) -- (0.5,1);
\clip (-0.25,-0.25) rectangle (1.25,1.25);
} &
\tikz{
\fill[blue!40] (0,0) -- (0,1) -- (0.5,1) -- (0.5,0) -- cycle;
\fill[green] (1,0) -- (1,1) -- (0,1) -- (0,0.5) to[out=0,in=90] (0.5,0) -- cycle;
\path[draw=black, line width=0.5] (0,0) -- (0,1) -- (1,1) -- (1,0) -- cycle;
\draw[line width=4, black] (0.5,0) to[in=0,out=90] (0,0.5);
\clip (-0.25,-0.25) rectangle (1.25,1.25);
} &
\tikz{
\fill[blue!40] (0,0) -- (0,1) -- (1,1) -- (1,0) -- cycle;
\fill[green] (1,0.5) -- (1,0) -- (0.5,0) -- (0.5,0.5) -- cycle;
\fill[red] (0.5,1) -- (0,1) -- (0,0.5) -- (0.5,0.5) -- cycle;
\path[draw=black, line width=0.5] (0,0) -- (0,1) -- (1,1) -- (1,0) -- cycle;
\draw[line width=4, black] (0.5,0) -- (0.5,1);
\draw[line width=4, black] (0,0.5) -- (0.35,0.5);
\draw[line width=4, black] (1,0.5) -- (0.65,0.5);
\clip (-0.25,-0.25) rectangle (1.25,1.25);
}
\\[0.5em]
\includegraphics[scale=0.333]{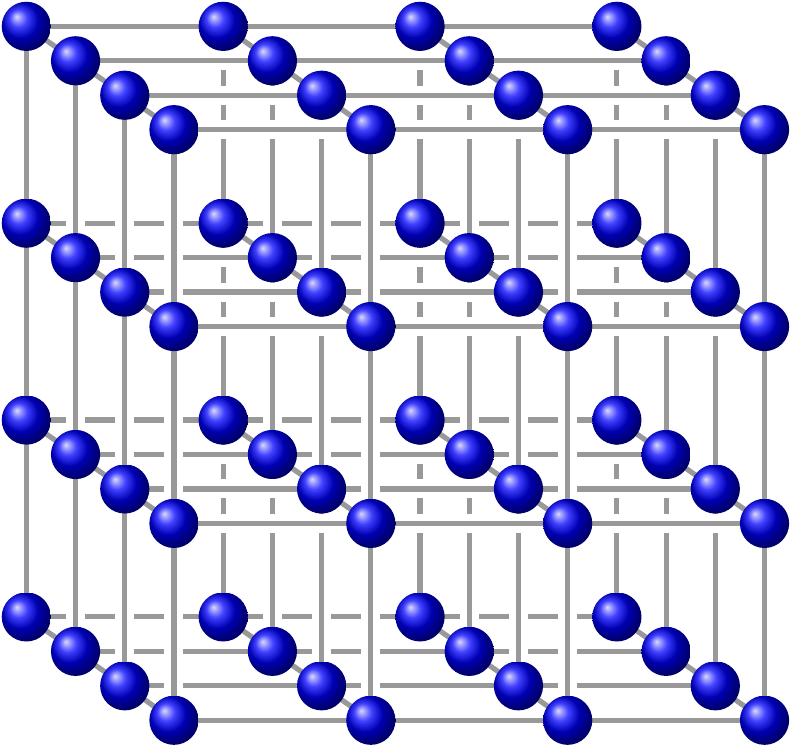} &
\includegraphics[scale=0.333]{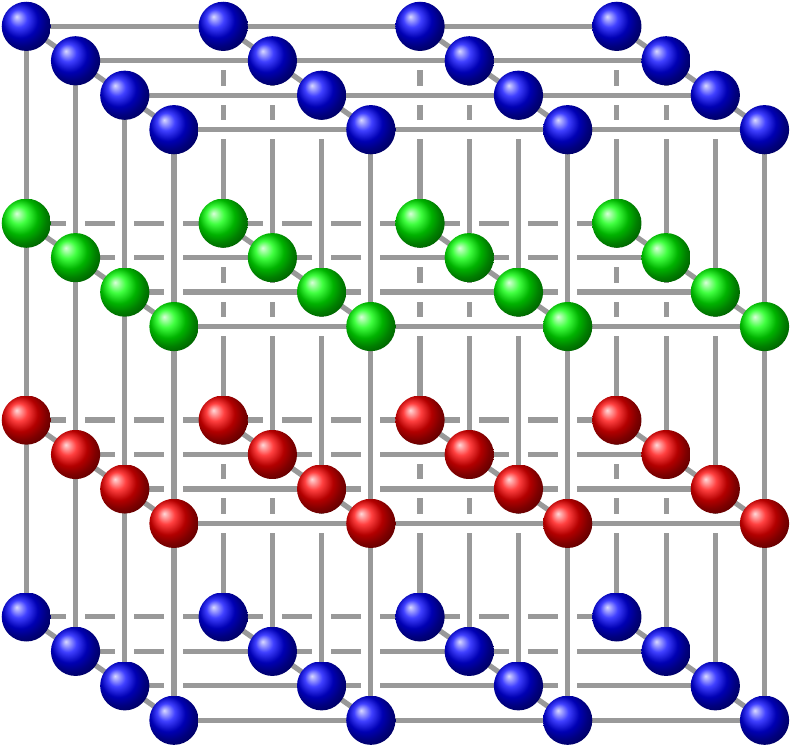} &
\includegraphics[scale=0.333]{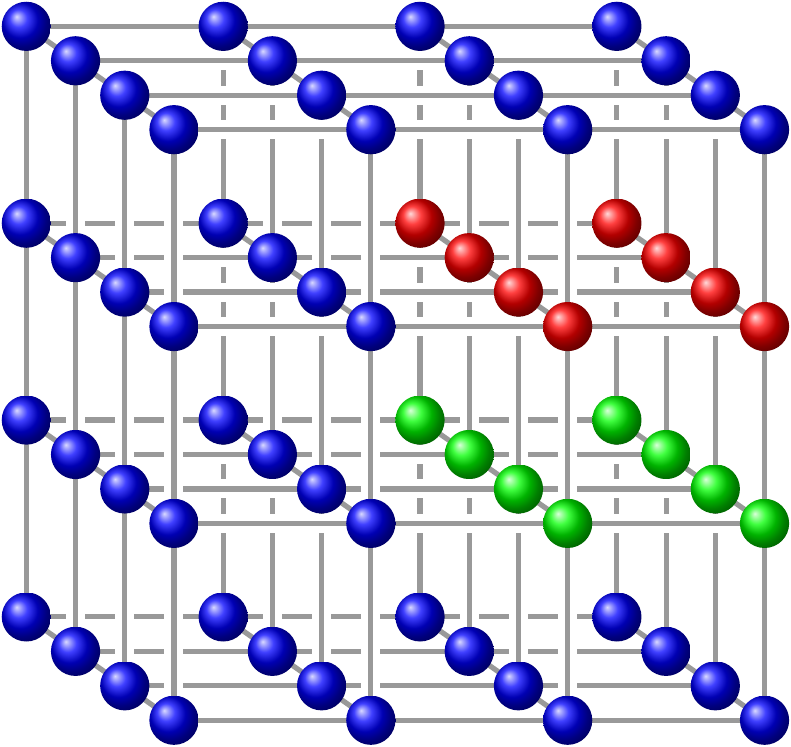} &
\includegraphics[scale=0.333]{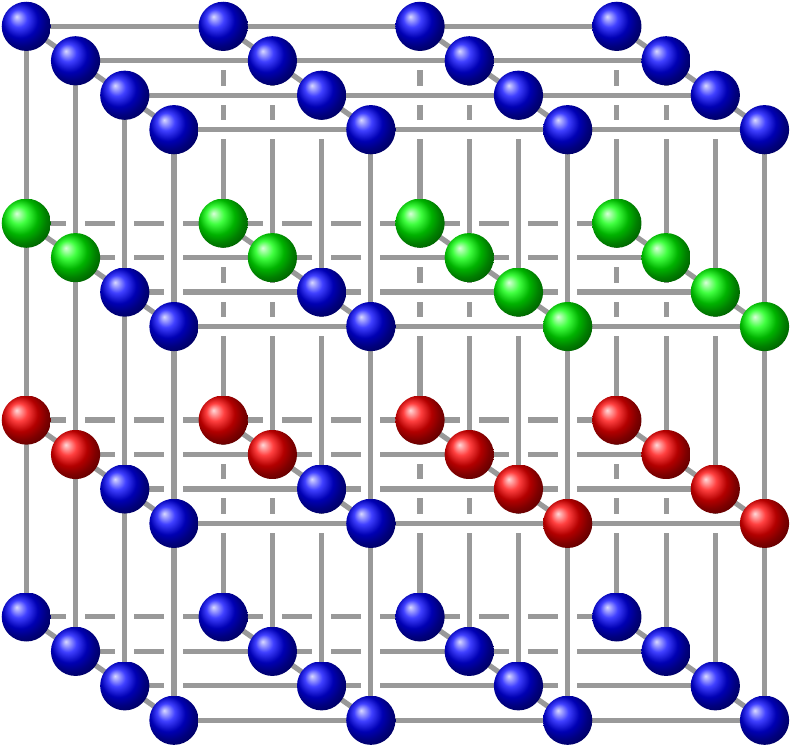} &
\includegraphics[scale=0.333]{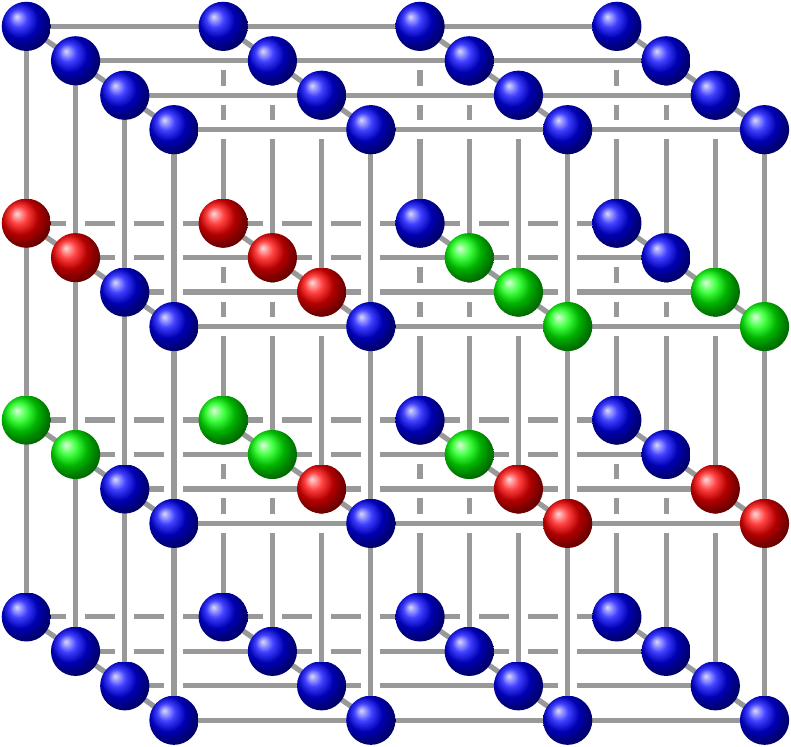} \\[0.5em]
\end{tabular}
\end{center}
\caption{The 3-coloring of each $3 \times 3 \times 3$ tile that corresponds to a 2-dimensional square tile. Obvious adjustments apply for rotations, reflections, and switching red and green.}
\label{tiles}
\end{figure}

In conclusion, the 3-color class of $\partial B$ is isotopic to the given link~$L$. We remark that this holds for our $3 \times 3 \times 3$ tiles regardless of how the four axes are oriented. The triangulation does depend on the axes' directions, as they determine which diagonal faces appear in each small cube. However, the small squares through which the link passes from one cube to another are exactly those where all 3 colors appear, for any choice of diagonals.

In order to transform the 3-sphere from $\partial B$ to $\partial B_n^4 = S_n^3$ as stated, one may let $n=\max(3n_x,3n_y,3n_z)$ and duplicate axis-parallel layers as needed. For example, $n_x$ may be increased to $n_x+1$ so that the 4-dimensional box becomes one longer in the $x$ direction, and the coloring of some layer $x \in \{0,\dots,n_x\}$ is duplicated into two layers. The 3-color class remains isotopic to~$L$. Indeed, every layer intersects the link transversely in the middle of each 3-colored triangle. Duplicating a layer replaces it by a triangular prism where the link crosses from one base to the opposite one, and no other 3-color components are created.
\end{proof}

\begin{remark}
\label{limitlink}
Consider a 3-colored 3-sphere $S_n^3$ realizing a link~$L$ as in Proposition~\ref{tiling}. The \emph{$m$-refinement} is the 3-colored 3-sphere $S_{nm}^3$ where every layer of the 3-coloring is duplicated $m$ times. By the same argument as in the last paragraph of the proof, the 3-color class $L_m$ of the \emph{$m$-refinement} realizes~$L$ too. In view of Proposition~\ref{3color4ball}, we expect that for a sufficiently large~$m$, there exists a coloring of the interior vertices of the $m$-refined 4-ball~$B_{mn}^4$ that produces a surface spanning the link~$L_m$ and realizing the 4-ball genus.
\end{remark}

Initial experiments indicate there is potential in this approach. We have implemented the above procedure for inputting a given knot~$K$ and coloring the vertices of a triangulated~$S^3$ such that its 3-color class is isotopic to~$K$. So far we have tried it on various knots with up to 8 crossings. In one experiment, we have taken~$K$ to be the \emph{square knot}, a six-crossing slice knot that has genus 2 in~$S^3$. In approximately  $10^3$ independent runs of our procedure, with approximately $10^5$ recoloring iterations each, we obtained two colorings that produced a genus 1 surface in~$B^4$ spanning $K$. While not yet recognizing a slice knot, this does demonstrate that our method can produce a surface in~$B^4$ with smaller genus than any surface in~$S^3$. Since there is much room to tune and improve our initial method, we regard this as a promising sign for the potential of this approach.

\bibliographystyle{alpha}
\bibliography{main}

\end{document}